\documentclass[12pt,a4paper, oneside,reqno]{amsart}
\usepackage[colorlinks]{hyperref}
\usepackage{amsthm,array,mathrsfs}
\usepackage{amsmath,amssymb,enumitem}

\usepackage{tikz}
\usetikzlibrary{patterns}
\usetikzlibrary{calc}

\newtheorem{theorem}{Theorem}[section]
\newtheorem{proposition}[theorem]{Proposition}

\newtheorem{corollary}[theorem]{Corollary}

\newtheorem*{theorem*}{Theorem}

\theoremstyle{definition}
\newtheorem{definition}[theorem]{\rm\bf Definition}

\theoremstyle{remark}
\newtheorem{remark}[theorem]{\rm\bf Remark}

\newtheorem{example}[theorem]{\rm\bf Example}

\def\half#1#2{\begin{matrix}\frac{#1}{#2}\end{matrix}}

\def\R#1{\mathbb{R}^{#1}}

\def\Field{\mathbf{K}}

\def\ZN{\mathbb{Z}_N^\circ}

\def\scal#1#2{\langle #1; #2 \rangle}

\def\Span#1{\langle #1\rangle}

\DeclareMathOperator{\Idm}{Idm}

\DeclareMathOperator{\trace}{tr}

\DeclareMathOperator{\Aut}{Aut}
\DeclareMathOperator{\ord}{ord}
\DeclareMathOperator{\image}{im}
\DeclareMathOperator{\spec}{spec}

\setlength{\parskip}{2mm}
\marginparwidth 0pt
\oddsidemargin        0pt \evensidemargin  0pt \marginparsep 0pt
\topmargin 0pt
\hoffset=0.5cm
\voffset=-0.2cm

\textwidth   6.2 in
\textheight  8.8 in

\begin{document}

\title{Medial and isospectral algebras}



\author{Yakov Krasnov}
\address{Department of Mathematics, Bar-Ilan University, Ramat-Gan, 52900, Israel}
\email{krasnov@math.biu.ac.il}
\author{Vladimir G. Tkachev}
\email{vladimir.tkatjev@liu.se}

\date{\today}

\maketitle

\begin{abstract}
The purpose of this paper is to give a systematic study of two new classes of commutative nonassociative algebras, the so-called isospectral and medial algebras. An isospectral algebra $\mathbb{A}$ is a generic commutative nonassociative algebra whose idempotents have the same Peirce spectrum. A medial algebra is algebra with identity $(xy)(zw)=(xz)(yw)$. We show that these two classes are essentially coincide. We also prove that any medial spectral algebra is isomorphic to a certain isotopic deformation of the commutative associative quotient algebra $\Field[z]/(z^n-1)$.

\keywords{Medial magma, Isospectral algebras, Quasigroups, Idempotents, Peirce decomposition}
\end{abstract}

\section{Introduction}


By an algebra $\mathbb{A}$ we shall always mean a commutative nonassociative finite dimensional algebra over a  field $\Field$ of $\mathrm{char}(\Field) \ne 2,3$. If not explicitly stated otherwise, any algebra will  be commutative (but maybe non-associative) and the algebra multiplication will normally be denoted by juxtaposition. An element $c\in \mathbb{A}$ is called an idempotent if $c^2=c$. The set of nonzero idempotents of $\mathbb{A}$ is denoted by $\Idm(\mathbb{A})$.

The \textit{spectrum} $\spec(c)$ of an idempotent $c\in \Idm(\mathbb{A})$ is the multiset of eigenvalues of the multiplication operator $L_c:x\to cx$. The \textit{Peirce spectrum} of an algebra is the union of the Peirce spectra of its nonzero idempotents.

Inspired by recent progress in axial algebras \cite{HRS15b}, \cite{Rehren17}, \cite{HSS18}, we introduce and study here two new classes of
algebras with remarkable properties. Recall that algebras with a prescribed Peirce spectrum  has been the object of extensive investigation in the context of automorphisms of  finite simple groups. In \cite{Griess82}, Griess  proved the existence of the largest simple sporadic group (Monster) by utilizing a nonassociative commutative algebra  of dimension $196883$, and showed that the automorphism group of this algebra is exactly the Monster simple group. S.~D.~Smith studied in \cite{Smith77} nonassociative commutative algebras for triple covers of 3-transposition groups and  S.~Norton considered general transposition algebras with a natural permutation representation \cite{Norton88}. Motivated by the work of Griess,  K.~Harada \cite{Harada81}, \cite{Harada84} considered the automorphism group of a vector space possessing a nonassociative commutative algebra structure, which is closely related to a natural permutation representation of a multiply transitive group. Some further results were obtained by H.P.~Allen \cite{Allen84b}, \cite{Allen84a} and K.~Narang \cite{Narang} and generalized by H.~Suzuki in \cite{Suzuki83}, \cite{Suzuki86b}. Invariant commutative nonassociative algebras for some sporadic simple groups were studied by A.J.E~Ryba \cite{Ryba96}, \cite{Ryba07}. We also mention some related classes of algebras introduced very recently in the work of D.J.F.~Fox on nonassociative algebras of metric curvature tensors \cite{Fox2021}, \cite{Fox2022}.

All the above algebras, including the Griess algebra, are generated by certain idempotents having the same Peirce spectrum with distinguished fusion rules (i.e. the multiplication rules between the corresponding Peirce eigenspaces). These ideas are captured by Majorana and \textit{axial algebras} in a recent study of A.~Ivanov, J.~Hall, F.~Rehren, S.~Shpectorov in  \cite{Ivanov09}, \cite{HRS15}, \cite{HRS15b}, \cite{Rehren17}, \cite{Ivanov2018}. Note that many of axial algebras are \textit{metrized}, i.e.  carry an \textit{invariant} bilinear form $b(x,y)$, in other words, there holds
\begin{equation}\label{metrized}
b(xy,z)=b(x,yz) \quad \text{for all $x,y,z\in \mathbb{A}$.}
\end{equation}

Beside the group theoretic context, the algebras with prescribed spectral properties also naturally appear in finite geometries, algebraic combinatorics \cite{DeMedts2018}, \cite{DeMedts17}, and differential geometry. We especially mention the class of commutative algebras associated with cubic minimal cones  which is relevant in the context of isospectral algebras. More precisely, one has the following definition \cite[Chapter~6]{NTVbook}, \cite{Tk14}, \cite{Tk18e}, \cite{Tk19a}: a commutative metrized nonassociative algebra $\mathbb{A}$ over $\R{}$ is called a \textit{Hsiang algebra} if it satisfies the trace free condition $\trace L_x=0$ and the splitting identity
\begin{equation}\label{Hsiang}
b(x^2,x^3)=k \,b(x,x)b(x^2,x),\qquad \forall x\in \mathbb{A},
\end{equation}
where $k\in \R{}$ is some fixed nonzero constant and $b(x,y)$ is a nondegenerate invariant bilinear form on $\mathbb{A}$. It follows immediately from \eqref{Hsiang} that all idempotents in a Hsiang algebra have the same length $b(e,e)=1/k$.  A more delicate property is that \eqref{Hsiang} implies that in any Hsiang algebra all idempotents have the same  spectrum with eigenvalues in $\{-1,-\frac12,\frac12,1\}$, where the multiplicity of each eigenvalue is independent on a choice of an  idempotent \cite{NTVbook}.

In the light of the above, the following definition is natural.

\begin{definition}
A commutative nonassociative algebra is called \textit{isospectral} if  all nonzero algebra idempotents have the same  spectrum.
\end{definition}

By the definition, the Peirce spectrum of an isospectral algebra coincides with the spectrum of any idempotent of the algebra.
In this paper we shall primarily concern with \textit{generic} isospectral algebras over algebraically closed fields, i.e. those containing the maximal possible finite number of distinct idempotents ($=2^{\dim \mathbb{A}}$); see section~\ref{sec:prelim} for concise definitions. Note, that the Hsiang algebras above are isospectral but they normally are \textit{not} generic and  contain infinitely many idempotents if the dimension $\ge 4$. A classification of all non-generic commutative nonassociative isospectral algebras is more involved and will be considered elsewhere.

Coming back to the generic case,  we have proved in \cite{KrTk18a} that any finite dimensional isospectral generic algebra $\mathbb{A}$ over $\mathbb{C}$ necessarily has the \textit{cyclotomic Peirce spectrum}, i.e. the finite set generated by a primitive root of unity of degree $n=\dim \mathbb{A}$.

\begin{remark}
The latter implies that except for the case of dimension $\dim \mathbb{A}=2$, the spectrum of any isospectral generic algebra over $\mathbb{C}$ contains complex numbers, therefore\textit{ a isospectral generic algebra of dimension $\ge3$ is \textit{not} metrized}! Indeed, a metrized algebra must have a real spectrum because the multiplication operator $L(x)$ is self-adjoint for any element $x\in \mathbb{A}$) \cite{Tk15b}, \cite{Tk18a}.
\end{remark}


Below we consider two motivating examples of isospectral  algebras, in dimension 2 and 3 respectively.

\begin{example}\label{ex1}
Let $\mathbb{A}_2(\Field)$ be the vector space over a field $\Field$ of $\mathrm{char} (\Field)\ne2,3$ generated by two vectors $c_1$ and $c_2$. Let us introduce a commutative algebra structure on $\mathbb{A}_2(\Field)$ by requiring
\begin{equation}\label{I2}
c_1c_1=c_1, \quad c_2c_2=c_2, \quad c_1c_2=c_2c_1=-c_1-c_2.
\end{equation}
This immediately implies that $c_1$ and $c_2$, as well as $c_3=-c_1-c_2$ are distinct idempotents in $\mathbb{A}_2(\Field)$. Furthermore, an easy verification reveals that under the made assumptions, there exists exactly $3=2^2-1$ nonzero idempotents in $\mathbb{A}_2(\Field)$ and they are closed under multiplication:
\begin{equation}\label{cicjck}
c_ic_j=c_k, \quad \text{where $\{i,j,k\}=\{1,2,3\}$}.
\end{equation}
The latter identity also implies that the  spectrum  of each idempotent $c_i$ is $\{1,-1\}$, therefore $\mathbb{A}_2(\Field)$  is an isospectral algebra (see also Example~\ref{ex5} below for the particular case $\Field=\mathbb{F}_7$).  Next, we have $\trace L_{c_i}=1-1=0$ for any idempotent $c_i$, thus $\trace L_x=0$ holds true for any $x\in \mathbb{A}_2$.
Furthermore, one readily verifies that any element $x\in \mathbb{A}_2$ satisfies the identity
\begin{equation}\label{B2}
x^{3}=\beta_2(x)x,
\end{equation}
where $\beta_2(x):\mathbb{A}_2\to \Field$ is a certain \textit{multiplicative} homomorphism:
\begin{equation}\label{composition}
\beta_2(xy)=\beta_2(x)\beta_2(y),\qquad \forall x,y\in \mathbb{A}_2.
\end{equation}
Explicitly, $\beta_2$ is given by the positive definite quadratic form
$$
\beta_2(x)=x_1^2-x_1x_2+x_2^2, \quad \text{where $x=x_1c_1+x_2c_2$.}
$$
The identity \eqref{composition} also shows that $\mathbb{A}_2$ is a commutative composition \textit{nonunital} algebra \cite[\S~1.2]{SpringerVeldkamp}.
The bilinear form $b(x,y)=\beta_2(x+y)-\beta_2(x)-\beta_2(y)$ is positive definite and invariant, i.e. satisfies \eqref{metrized}. In other words, $\mathbb{A}$ is a metrized algebra, see \cite{Tk18a}.

Furthermore, by \eqref{B2}
$$
b(x^2,x^3)=b(x^2,\beta_2(x)x)=\beta_2(x)b(x^2,x)=\frac12b(x,x)b(x^2,x),
$$
which implies that $\mathbb{A}_2$  is a Hsiang algebra. In fact, $\mathbb{A}_2$ is the Hsiang algebra  of a one-dimensional minimal cone obtained by union of two perpendicular lines in $\R{2}$ (Chap.~6, \cite{NTVbook}). It is not difficult to show that $\mathbb{A}_2$ is the only isospectral generic algebra in dimension two (in any characteristic $\ne 2$).
\end{example}

\begin{remark}
The algebra $\mathbb{A}_2(\Field)$ is isomorphic to the two-dimensional Harada algebra defined \cite{Harada84}; its automorphism group is exactly the permutation group of the idempotents:
$$
\Aut(\mathbb{A}_2(\Field))=\mathrm{Perm}(\Idm(\mathbb{A}_2))\cong S_3,
$$
is isomorphic to the symmetric group of degree three. The general Harada algebras appeared very recently in the context of simplicial algebras \cite{Fox2020a} and  cyclic elements
in semisimple Lie algebras \cite{EJK}. Note, however, that the general Harada algebras in dimensions $n\ge 3$ are not isospectral.
\end{remark}

\begin{example}\label{ex2}
The above example can be appropriately generalized for dimension 3. The corresponding three dimensional isospectral algebra $\mathbb{A}_3(\mathbb{C})$ over the field of complex numbers $\mathbb{C}$ has been announced in \cite[Sec.~5.1]{KrTk18a}. More precisely, define $\mathbb{A}_3(\mathbb{C})$ as the free commutative algebra over $\mathbb{C}$ spanned by three elements $c_1,c_2,c_3$
subject to $c_ic_i=c_i$, $1\le i\le 3$, and
\begin{align*}
  c_5:=c_1c_{2}&=(\gamma-1)c_1-\gamma c_{2}+\gamma c_{3},\\
  c_6:=c_2c_{3}&=\gamma c_{1}+(\gamma-1)c_2-\gamma c_{3},\\
  c_7:=c_3c_{1}&=-\gamma c_{1}+\gamma c_{2}+(\gamma-1)c_3,
\end{align*}
where $\gamma$ is the Kleinian integer unit \cite{ConwaySmith}, i.e. a root of
$$
2\gamma^2-\gamma+1=0.
$$
Define also
$$
c_4=-\gamma(c_1+c_2+c_3).
$$
Then one can show that $c_i$, $1\le i\le 7$ are the only idempotents of  $\mathbb{A}_3$, in particular, the algebra $\mathbb{A}_3$ is generic. A straightforward examination reveals that all idempotents have the same (cyclotomic) spectrum
$$
\sigma(c_i)=\{1,\half{-1-\sqrt{-3}}2, \half{-1+\sqrt{-3}}2\}, \qquad  1\le i\le 7.
$$
Furthermore, as in Example~\ref{ex1}, one can show that all nonzero idempotents in $\mathbb{A}_3(\mathbb{C})$ are closed under the algebra multiplication, i.e. $c_ic_j\in \Idm(\mathbb{A}_3(\mathbb{C}))$ for any $1\le i,j\le 7$. More precisely, the multiplication rules  between idempotents $c_i$ follow the pattern in the left part of Table~\ref{tab1}.

\begin{table}
\begin{tabular}{c|ccccccc}
&1&2&3&4&5&6&7\\\hline
1&1&5&7&3&6&2&4\\
2&5&2&6&1&4&7&3\\
3&7&6&3&2&1&4&5\\
4&3&1&2&4&7&5&6\\
5&6&4&1&7&5&3&2\\
6&2&7&4&5&3&6&1\\
7&4&3&5&6&2&1&7\\
\end{tabular}
\qquad \qquad
\begin{tabular}{c|ccccccc}
&1&2&3&4&5&6&7\\\hline
1&1&5&2&6&3&7&4\\
2&5&2&6&3&7&4&1\\
3&2&6&3&7&4&1&5\\
4&6&3&7&4&1&5&2\\
5&3&7&4&1&5&2&6\\
6&7&4&1&5&2&6&3\\
7&4&1&5&2&6&3&7\\
\end{tabular}
\vspace*{0.5cm}
\label{tab1}
\caption{The left multiplication table corresponds to $c_i$ for Example~\ref{ex2}, the right table illustrates the medial law \eqref{nizerule7} after permutation \eqref{perm}}
\end{table}

For example, $c_2c_5=c_4$ and $c_3c_5=c_1$.
It is easy to see that the latter multiplication table is a \textit{Latin square}, i.e. each index occurs precisely one time in each column and row. In fact, there is another remarkable property of this Latin square: the authors were completely surprised to notice that the table also satisfies the \textit{medial magma identity}
\begin{equation}\label{magma}
(xy)(zw)=(xz)(yw), \qquad \forall x,y,z,w\in \Idm(\mathbb{A}_3).
\end{equation}
We point out, that, although it is easy to verify \eqref{magma} for  particular cases, it is quite nontrivial to examine  \eqref{magma} for \textit{all} quadruples $(x,y,z,w)$ (taking into account the commutativity, one have to verify totally $\frac12(\frac{7\cdot 6}2)\cdot ((\frac{7\cdot 6}2-1)=210$ quadruples). In fact, we will show in this paper that \eqref{magma} holds for all  isospectral algebras including $\mathbb{A}_3$. But in this particular case, we can explain how to check \eqref{magma} right now. To this end, let us consider the permutation of $c_i$ given accordingly to
\begin{equation}\label{perm}
\left(\begin{array}{cccccccc}
1&2&3&4&5&6&7\\
1&5&2&3&6&4&7
\end{array}
\right)
\end{equation}
Then the resulting multiplication  (the so-called \textit{isotopy} of the Latin square) corresponds to  the right part of Table~\ref{tab1}. This updated table is  more illuminating and informative. In particular, one can easily verify that the entries in the right table satisfy the following nice rule:
\begin{equation}\label{nizerule7}
i\circledast j=\frac12 (i+j) \mod 7,
\end{equation}
i.e. the multiplication of idempotents (after permutation \eqref{perm}) satisfies $c_ic_j=c_{i\circledast j}$.
This new multiplication $\circledast$ on $\mathbb{Z}_7$ is commutative but nonassociative. But, since
$$
(i\circledast j)\circledast(k\circledast l)\equiv \frac14 (i+j+k+l) \mod 7,
$$
the new multiplication $\circledast$ obviously satisfies the medial magma identity \eqref{magma}.

Since $\mathbb{A}_3(\mathbb{C})$ is generated by the idempotents, it  follows that any quadruple of elements of $\mathbb{A}_3(\mathbb{C})$ also satisfies \eqref{magma}. This motivates the following

\begin{definition}
A commutative algebra $\mathbb{A}$ satisfying
\begin{equation}\label{magma0}
(xy)(zw)=(xz)(yw), \qquad \forall x,y,z,w\in \mathbb{A}
\end{equation}
is called \textit{medial}.
\end{definition}

\noindent
Finally, one can show that any element of $\mathbb{A}_3(\mathbb{C})$ satisfies the identity generalizing \eqref{B2}:
\begin{equation}\label{B4}
x^{4}=\beta_3(x)x,\qquad \forall x\in \mathbb{A}_3(\mathbb{C}),
\end{equation}
where $\beta_3(x):\mathbb{A}_3(\mathbb{C})\to \mathbb{C}$ is a certain multiplicative homomorphism.
\end{example}

The remarkable medial magma property for isospectral algebras $\mathbb{A}_2(\Field)$ and $\mathbb{A}_3(\mathbb{C})$ has been one of the starting points for the research presented in this paper. The  medial magma identity itself and its variations is very well-known in the context of quasigroups, finite geometries and Steiner designs. The classical result of Toyoda \cite{Toyoda41}, Bruck \cite{Bruck44} and Murdoch \cite{Murdoch41} characterizes a medial quasigroup is an isotope of a certain abelian group. On the other hand, to the best of our knowledge, the algebras satisfying the medial magma identity have not yet been systematically studied; see, however, some related train algebras satisfying polynomial identities studied in \cite{LabraR}.

\medskip
The paper is organized as follows. We recall the basic facts about the Peirce decomposition, generic algebras and quasigroups in section~\ref{sec:prelim}. In section~\ref{sec:main} we  formulate our main results. In section~\ref{sec:medial} we discuss medial algebras and their Peirce decomposition. In particular, we show that any medial algebra is a direct product of its radical and an isospectral algebra. This establishes a link between medial and isospectral algebras. Next, isospectral algebras are discussed in section~\ref{sec:iso} and the subclass of medial isospectral algebras in section \ref{sec:med-iso}.   In section~\ref{sec:polynom} we prove the existence of a medial isospectral algebra in any dimension $n\ge 2$. The set of all idempotents in a isospectral medial algebra has a natural structure of a (medial) qiuasigroup.  The classical result of Toyoda-Bruck-Murdoch characterizes a medial quasigroup is an isotope of a certain abelian group. In fact, we establish in section~\ref{sec:structure} that the idempotent quasigroup of isospectral medial alegrbas is much simpler: it is just cyclic.

\medskip
For the reader convenience, we list below some frequently used notation:
\begin{tabbing}
\hspace{95pt}\= \= \\
$\mathbb{A}$\> a commutative nonassociative algebra over a field $\Field$\\
$L_y:x\to yx$\> the (left) multiplication operator by $y$\\
$\mathbb{A}_c(\lambda)$\> the $\lambda$-eigenspace of $L_c$ \\
$\Idm(\mathbb{A})$\> the set of nonzero idempotents of $\mathbb{A}$\\
$\spec(c)$\> the spectrum of an idempotent $c$, i.e. the multiset of eigenvalues of $L_c$\\
$\sigma(c)$\> the Peirce spectrum of $c$, i.e. the distinct eigenvalues of $L_c$\\
$\epsilon_n:=e^{\frac{2\pi \sqrt{-1}}{n}}$\>
the primitive root of unity order $n$.
\end{tabbing}

\section{Main results}
\subsection{Preliminaries}\label{sec:prelim}
We use the standard notation for nonassociative monomials generated by $x\in \mathbb{A}$: $x^2=xx$, $x^3=x(xx)$. Note that in general $x^4=xx^3$ and $x^2x^2$ maybe different.

We recall some basic terminology and concepts following to \cite{JacobsonBook}, \cite{KrTk18a}, \cite{Tk18e}. An element $c$ of an algebra $\mathbb{A}$ is called an idempotent (resp. a $2$-nilpotent) if $c^2=c$ (resp. $c^2=0$). By $\Idm(\mathbb{A})$ we denote the set of all nonzero idempotents of $\mathbb{A}$. Sometimes it is convenient to denote by $\Idm_0(\mathbb{A})$ the set  of all idempotents (including zero). Given an idempotent $c$, one can define the (left$=$right) multiplication operator
$$
L_c:x\to cx.
$$
We abuse terminology by saying that a vector $x\ne 0$ is an \textit{eigenvector} of an idempotent $c$ with eigenvalue $\lambda$ if  $cx=L_cx=\lambda x$. Any eigenvalue $\lambda$ of $c$ is a root of the characteristic polynomial of  $L_c$ regarded  as an endomorphism $L_c\in \mathrm{End}_\Field(\mathbb{A})$.

\begin{definition}
The \textit{spectrum of an idempotent} $c$, denoted by $\spec (c)$, is  the multi-set (a set with repeated elements) of all roots of the characteristic polynomial of $L_c$. We also denote by $\sigma(c)$ the \textit{Peirce spectrum} of $c$, i.e. the set of all \textit{distinct} eigenvalues of $L_c$. In other words, the Peirce spectrum is the support of $\spec (c)$. Thus, two idempotents may have distinct spectra while their Peirce spectra may be the same. By
$$
\sigma(\mathbb{A})=\bigcup_{c\in \Idm(\mathbb{A})}\sigma(c)
$$
we denote the \textit{Peirce spectrum of an algebra} $\mathbb{A}$, i.e. all possible distinct eigenvalues of $L_c$, when $c$ runs over all idempotents in $\Idm(\mathbb{A})$.
\end{definition}

The classical example is the projection operator with (Peirce) spectrum $0$ and $1$, where the multiplicity of $1$ indicates the dimension of the projection target space.

The  spectrum of any idempotent is always nonempty because $1\in \sigma(c)$. An idempotent $c$ is called primitive if the eigenvalue $1$ has multiplicity one.
An idempotent $c$ is called \textit{semi-simple} if $\mathbb{A}$ splits into a direct sum of  $\lambda$-eigenspaces $\mathbb{A}_c(\lambda)$ of $L_c$, where $\lambda\in \sigma(c)$:
\begin{equation}\label{Peircede}
\mathbb{A}=\bigoplus_{\lambda\in \sigma(c)}\mathbb{A}_c(\lambda).
\end{equation}
This decomposition is also known as the \textit{Peirce decomposition} of $\mathbb{A}$ relative to $c$. A \textit{fusion law} is a map $\star:\sigma(c)\times \sigma(c)\to 2^{\sigma(c)}$  such that
\begin{equation}\label{fusionlaws}
\mathbb{A}_c(\lambda)\mathbb{A}_c(\mu)\subset \bigoplus_{\nu\in \lambda\star \mu}\mathbb{A}_c(\nu).
\end{equation}

Recall that two algebras $\mathbb{A}$ and $\mathbb{B}$  over a field $\Field$ are \textit{isotopic} if there exist three non-singular linear transformations $f$, $g$ and $h$ from $\mathbb{A}$ to $\mathbb{B}$ such that
\begin{equation}\label{fgh}
f(x)g(y)=h(xy).
\end{equation}
The triple $(f,g,h)$ is called isotopism between the algebras $\mathbb{A}$ and $\mathbb{B}$. If $f=g$, $\mathbb{A}$ and $\mathbb{B}$ are called strongly isotopic, or $\mathbb{A} \simeq\mathbb{B}$. If $f=g=h$, the algebras $\mathbb{A}$ and $\mathbb{B}$ are called isomorphic, or $\mathbb{A} \cong\mathbb{B}$.

The isotopism of algebras is an important concept of nonassociative algebra introduced by \cite{AlbertIsotop}; see also \cite{Falcon17}.  Partition of nonassociative algebras into (strong) isotopism classes is very different from that the partition into classes of non-isomorphic algebras, but it helps to recognize some very common patterns which are usually not seen via isomorphisms. We refer the interested reader to a nice survey of R.~Falc\'{o}n, and O.~Falc\'{o}n, and N\'{u}\~{n}ez \cite{FalconIso} about a comprehensive discussion of the isotopy concept and its  applications.

An important concept in the context of the present paper is \textit{generic}  nonassociative algebras. This class has been recently introduced and thoroughly studied in \cite{KrTk18a}. Below we recall some basic facts and  refer the interested reader to \cite{KrTk18a} for more details and the  proof of Theorem~\ref{th:gener} below.

The definition of a generic algebra is based on the Segre's classical approach ~\cite{Segre} to  idempotents in commutative algebras over the complex numbers (or, in general, over an algebraically closed field $\Field$) by purely algebraic geometry methods. More precisely, Segre remarked that the set of idempotents of an algebra can be characterized as the solution set of a system of quadratic equations over $\Field$. Since a generic (in the Zariski sense) polynomial system has always \textit{B\'ezout's number} of solutions, this implies that a {generic}  algebra must have exactly $2^{\dim A}$ distinct idempotents. The genericity here should be understood in the sense that the subset of nonassociative algebra structures on a vector space $V$ is an open Zariski subset in $V^*\otimes V^*\otimes V$. This suggests the following formal definition.

 Given an algebra $\mathbb{A}$ over  a subfield $\Field$ of complex numbers, by $\mathbb{A}_\mathbb{C}$ we denote the complexification of $\mathbb{A}$ obtained in an natural way by extending  the ground field to $\mathbb{C}$.

\begin{definition}
An algebra $\mathbb{A}$ over $\Field$  is called a \textit{generic nonassociative algebra} if its comlexification $\mathbb{A}_{\mathbb{C}}$ contains exactly $2^n$ distinct idempotents, where $n=\dim \mathbb{A}$.
\end{definition}

\begin{theorem}[Theorem~3.3 in \cite{KrTk18a}]\label{th:gener}
If $\mathbb{A}$ is a commutative nonassociative generic algebra  then $\half12\not\in\sigma(\mathbb{A})$. In the converse direction: if $\half12\not\in\sigma(\mathbb{A})$ and $\mathbb{A}$ does not contain $2$-nilpotents then $\mathbb{A}$ is generic.
\end{theorem}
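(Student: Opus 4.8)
The plan is to recast the idempotent equation as an intersection of quadrics over the algebraically closed ground field (equivalently, inside $\mathbb{A}_{\mathbb{C}}$) and to read off both implications from B\'ezout's theorem. Fixing a basis, consider the quadratic map $F(x)=x^2-x$; its affine zero set $V=\{x:x^2=x\}$ is exactly $\Idm_0(\mathbb{A}_{\mathbb{C}})$, the idempotents together with the trivial solution $0$, and genericity is precisely the condition $|V|=2^n$, $n=\dim\mathbb{A}$. Homogenizing $F$ with an extra coordinate $x_0$ produces $n$ quadrics in $\mathbb{P}^n$ whose locus at infinity $\{x_0=0\}$ consists of the projectivized solutions of $x^2=0$, i.e. of the $2$-nilpotents. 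Differentiating and using commutativity, the Jacobian of $F$ at a point $c$ is $2L_c-\mathrm{Id}$; consequently $c$ is a simple (multiplicity-one) zero of $F$ if and only if $\det(2L_c-\mathrm{Id})\ne 0$, that is, if and only if $\tfrac12\notin\sigma(c)$. Note also that $0$ is always simple, since there $2L_0-\mathrm{Id}=-\mathrm{Id}$.

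First I would settle the converse. Assuming $\mathbb{A}$ has no $2$-nilpotents, the projective zero locus $Z$ of the homogenized quadrics is disjoint from the hyperplane $\{x_0=0\}$, hence $Z$ lies in the affine chart $\{x_0\neq0\}$. Being closed in $\mathbb{P}^n$ it is complete, and being contained in an affine chart it is affine; since the only complete affine varieties are finite, $Z$ is a finite set lying entirely in the affine part. B\'ezout's theorem then yields $\sum_{p\in Z}i(p)=2^n$. The hypothesis $\tfrac12\notin\sigma(\mathbb{A})$ makes $2L_c-\mathrm{Id}$ invertible at every idempotent $c$, and $0$ is simple as well, so $i(p)=1$ for all $p\in Z$; therefore $|V|=|Z|=2^n$ and $\mathbb{A}$ is generic.

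For the forward implication, suppose $|V|=2^n$. Since $V$ is finite, its points are isolated points of the projective intersection $Z\supseteq V$ (any positive-dimensional component of $Z$ would have to lie at infinity, else $V$ would be infinite). The refined B\'ezout inequality bounds the sum of local multiplicities over isolated points by the product of the degrees, giving $\sum_{p\in V}i(p)\le 2^n$; as each of the $2^n$ points contributes $i(p)\ge 1$, every $i(p)$ must equal $1$. Multiplicity one at an idempotent $c$ means $2L_c-\mathrm{Id}$ is invertible, i.e. $\tfrac12\notin\sigma(c)$, and taking the union over $c\in\Idm(\mathbb{A}_{\mathbb{C}})$ gives $\tfrac12\notin\sigma(\mathbb{A})$.

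The delicate point throughout is the correct application of B\'ezout. In the converse, finiteness of the intersection is not automatic, and it is precisely the absence of $2$-nilpotents that, via the \emph{complete plus affine implies finite} principle, clears the hyperplane at infinity and licenses the exact count $\sum i(p)=2^n$. In the forward direction the genericity hypothesis controls only the affine solutions, so the naive product-of-degrees form of B\'ezout (which presupposes no excess components at infinity) cannot be invoked directly; the robust substitute is the refined B\'ezout inequality for isolated points, which the $2^n$ distinct affine idempotents then saturate, forcing each to be reduced. The conceptual bridge tying these geometric multiplicities to the spectral conclusion is the identity $DF_c=2L_c-\mathrm{Id}$, which equates a zero being reduced with $\tfrac12$ not being an eigenvalue of $L_c$.
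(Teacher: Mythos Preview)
Your argument is correct and follows the same approach that the paper (and the cited reference \cite{KrTk18a}) uses: the paper does not prove Theorem~\ref{th:gener} here but, in the sketch following Theorem~\ref{th4}, records exactly the two ingredients you rely on, namely the identification of idempotents with zeros of $F(x)=x^2-x$ together with $DF(c)=2L_c-\mathbf{1}$, and the B\'ezout count for the associated system of quadrics. Your treatment is in fact more careful than the sketch, making explicit why the absence of $2$-nilpotents clears the hyperplane at infinity (for the converse) and invoking the refined B\'ezout inequality over isolated points (for the forward implication) rather than the naive product-of-degrees form.
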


The main result of \cite{KrTk18a} states that idempotents in a generic commutative algebra must satisfy certain a priori constrains, \textit{syzygies}. This phenomenon is somewhat unexpected taking into account that an algebra in the context is generic.  Remarkably, the syzygies can be written explicitly. We combine the results of Theorem~4.1 and Proposition~4.2 in \cite{KrTk18a} in the theorem below.

\begin{theorem}\label{th4}
Let $\mathbb{A}$ be a generic commutative nonassociative algebra over $K$, $\dim A=n$.  Then
\begin{equation}\label{polynom}
\sum_{c\in \Idm(\mathbb{A})}\frac{\chi_c(t)}{\chi_c(\half12)}=2^n(1-t^n),\quad \forall t\in\R{},
\end{equation}
where $\chi_c(t)=\det (L_c-\mathbf{1}t)$ is the characteristic polynomial of $L_c$.
Furthermore, let $H(x):\Field^n\to \Field^s$ be a vector-valued polynomial map ($s\ge 1$) such that for each coordinate $\deg H_i\le n-1$, $1\le i\le n$. Then
\begin{align}
\sum_{c\in \Idm_0 (\mathbb{A})}\frac{H(c)}{\chi_c(\frac12)}&=0,\label{EuJa4}
\end{align}
In particular,
\begin{align}
\sum_{c\in \Idm(\mathbb{A})}(\chi_c(\frac12))^{-1}c&=0,\label{EuJa2}
\end{align}

\end{theorem}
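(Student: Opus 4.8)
The plan is to realise the idempotents of $\mathbb{A}$ (including $0$) as the zero set of an explicit quadratic polynomial map and then to invoke the classical Euler--Jacobi vanishing theorem for complete intersections. Write the idempotent equation $c^2=c$ as $F(c)=0$, where $F\colon\Field^n\to\Field^n$ is the map $F(x)=x^2-x$, each component of degree two; its solution set is exactly $\Idm_0(\mathbb{A})$. Differentiating and using commutativity gives $DF(x)=2L_x-\mathbf{1}$, so at any idempotent the Jacobian determinant is
$$J_F(c)=\det(2L_c-\mathbf{1})=2^n\det(L_c-\tfrac12\mathbf{1})=2^n\chi_c(\tfrac12).$$
Before applying Euler--Jacobi I would verify its hypotheses. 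Since $\mathbb{A}$ is generic, Theorem~\ref{th:gener} gives $\tfrac12\notin\sigma(\mathbb{A})$, so $\chi_c(\tfrac12)\ne0$ and hence $J_F(c)\ne0$ for every $c\in\Idm_0(\mathbb{A})$; thus all $2^n$ zeros of $F$ are simple. The leading (degree-two) form of $F$ is the squaring map $x\mapsto x^2$, so a zero of $F$ at infinity would be a nonzero element with $x^2=0$, i.e.\ a nonzero $2$-nilpotent; but genericity already supplies the full B\'ezout count $|\Idm_0(\mathbb{A})|=2^n$ of affine zeros, which leaves no room for zeros at infinity.

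With these hypotheses in place, Euler--Jacobi applies: for any polynomial $P$ with $\deg P<\sum_i(\deg F_i-1)=n$ one has $\sum_{c\in\Idm_0(\mathbb{A})}P(c)/J_F(c)=0$. Applying this coordinatewise to a map $H$ with $\deg H_i\le n-1$ and cancelling the constant factor $2^n$ from $J_F$ yields \eqref{EuJa4} at once; taking $H(x)=x$ (of degree $1\le n-1$ for $n\ge2$) and discarding the vanishing $c=0$ summand gives \eqref{EuJa2}.

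It remains to prove \eqref{polynom}, and here the same vanishing does the work after a little bookkeeping. Expand the characteristic polynomial in powers of $t$, $\chi_c(t)=\sum_{k=0}^n a_k(c)\,t^{n-k}$, where $a_k(c)$ is a sum of $k\times k$ principal minors of $L_c$ and hence a polynomial of degree $\le k$ in the coordinates of $c$ (each entry of $L_c$ being linear in $c$). Then
$$\sum_{c\in\Idm_0(\mathbb{A})}\frac{\chi_c(t)}{\chi_c(\tfrac12)}=\sum_{k=0}^n t^{n-k}\sum_{c\in\Idm_0(\mathbb{A})}\frac{a_k(c)}{\chi_c(\tfrac12)}.$$
For $0\le k\le n-1$ we have $\deg a_k\le n-1<n$, so the Euler--Jacobi vanishing kills every inner sum except possibly $k=n$; hence the left-hand side is \emph{independent of $t$}. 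Evaluating this constant at the convenient value $t=\tfrac12$ collapses each summand to $1$ and gives $|\Idm_0(\mathbb{A})|=2^n$. Finally I would peel off the $c=0$ term using $\chi_0(t)=(-t)^n$, so that $\chi_0(t)/\chi_0(\tfrac12)=2^nt^n$, leaving
$$\sum_{c\in\Idm(\mathbb{A})}\frac{\chi_c(t)}{\chi_c(\tfrac12)}=2^n-2^nt^n=2^n(1-t^n),$$
which is \eqref{polynom}.

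The Jacobian computation and the degree count for the $a_k$ are routine; the genuine content, and the step I expect to be the main obstacle, is the careful verification that $F=0$ has no zeros at infinity and only simple affine zeros, so that Euler--Jacobi is legitimately applicable. This is exactly where genericity enters (equivalently, the absence of nonzero $2$-nilpotents together with $\tfrac12\notin\sigma(\mathbb{A})$), and one must work over the complexification $\mathbb{A}_{\mathbb{C}}$ (or an algebraically closed field) for both the B\'ezout count and the residue-theoretic formula; the resulting identities, having rational coefficients, then hold as stated.
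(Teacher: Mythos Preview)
Your proof is correct and follows essentially the same approach as the paper: both realise the idempotents as the zero set of $F(x)=x^2-x$, compute $DF(c)=2L_c-\mathbf{1}$ so that $\det DF(c)=2^n\chi_c(\tfrac12)$, and apply the Euler--Jacobi formula. The paper's sketch here treats only \eqref{EuJa4} and \eqref{EuJa2} and defers \eqref{polynom} to the cited reference, whereas you supply a clean self-contained derivation of \eqref{polynom} by expanding $\chi_c(t)$ and observing that the coefficients $a_k(c)$ have degree $\le k\le n-1$ for $k<n$; this extra detail is welcome and entirely in the spirit of the paper's argument.
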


For the reader convenience, we recall the key steps of the proof. Since $\mathbb{A}$ is a commutative algebra over an infinite field, the multiplication is uniquely determined by the square mapping
$\Psi(x)=x^2:\mathbb{A}\to \mathbb{A}$ by polarization
$$
xy=\frac12(\Psi(x+y)-\Psi(x)-\Psi(y))=\frac12 D\Psi(x)\,y
$$
where $D\Psi(x)$ is the Jacobi matrix of $\Psi$ at $x$.
In this notation, idempotents $c\in\mathbb{A}$ are in one-to-one correspondence with zeroes of the quadratic map $F(x)=\Psi(x)-x$. Note that
$$
D\Psi(x)=2L_x.
$$
Then the genericity condition on $\mathbb{A}$ is equivalent to saying that all roots of $F(x)=0$ are nonsingular, i.e. $DF(x)$ has the maximal rank. Applying the Euler-Jacobi formula \cite[p.~106]{ArnVarG} (see also Theorem~4.3 in \cite{BKK12}), one obtains
$$
0=\sum_{c: F(c)=0}\frac{H(c)}{\det[DF(c)]}=
\sum_{c: \Psi(c)=c}\frac{H(c)}{\det[D\Psi(c)-1]}=
\sum_{c\in\Idm(\mathbb{A})}\frac{H(c)}{2^n\det[L_c-\frac12]}
$$
which proves \eqref{EuJa4}. Applying \eqref{EuJa4} to $H(x)=$ the $i$th coordinate function (in a certain fixed basis of $\mathbb{A}$) one arrives at \eqref{EuJa2}.

Finally, let us recall some principal facts from quasigroup theory. Following \cite{Belousov}, \cite{Cameron}, we define a \textit{quasigroup}  to be a set $Q$ with a binary operation $\circ$ in which the equations $a\circ x = b$ and $y\circ a = b$ have unique solutions $x$ and $y$ for any given $a$ and $b$ from $Q$. A \textit{Latin square} is an $N \times N$ array containing $N$ different entries, such that each entry occurs exactly once in each row and once in each column. Thus, a set with a binary operation is a quasigroup if and only if its operation table is a Latin square. Two quasigroups $(Q, \circ)$ and $(S,\bullet)$ are called \textit{isotopic} if there exist bijections $f,g,h:Q\to S$ such that $f(x)\bullet g(y)=h(x\circ y)$ for any $x,y\in Q$.

The number of elements of a quasigroup is called its order and denoted by $|Q|$. A quasigroup (Latin square) $(Q,\circ)$ is called \textit{commutative}  (resp. \textit{idempotent}) if $x\circ y=y\circ x$ (resp. the identity $x\circ x=x$ holds for any $x\in Q$). Similarly, $Q$ and it is called \textit{medial} \cite[p.~33]{Belousov} if the identity
\begin{equation}\label{xyzw}
(x\circ y)\circ(z\circ w)=(x\circ z)\circ(y\circ w)
\end{equation}
holds. Note that the concept of a quasigroup with the medial identity was originally introduced by Murdoch \cite{Murdoch41} by the name abelian quasigroups; see also \cite{Bruck44}, \cite{Toyoda41}. We refer also to \cite{Stein57} for some equivalent descriptions of medial quasigroups and their structure.
Many examples of medial semigroups as well as their discussion in the context of abstract means and mid-points can be found in \cite{Frink55}.

An important theorem of Bruck-Murdoch-Toyoda   classifies medial quasigroups.
\begin{theorem}[The Bruck-Murdoch-Toyoda Theorem \cite{Murdoch41}, \cite{Bruck44}, \cite{Toyoda41}]\label{th:murd}
For any medial quasigroup $(Q,\circ)$, there exists an abelian group $(G, +)$, a fixed element $g\in G$, and commuting automorphisms $\phi,\psi:G\to G$ such that $x\circ y=g+\phi(x)+\psi(y)$.
\end{theorem}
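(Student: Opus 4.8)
The plan is to realize $(Q,\circ)$ as a principal isotope of an abelian group and then read off the two automorphisms. First I would record that, in any quasigroup, the left and right translations $L_a\colon x\mapsto a\circ x$ and $R_a\colon x\mapsto x\circ a$ are bijections of $Q$. Fixing an arbitrary reference element $e\in Q$, I would set $\phi:=R_e$, $\psi:=L_e$ and define a new operation by the principal isotopy
\begin{equation*}
x+y:=\phi^{-1}(x)\circ\psi^{-1}(y),\qquad\text{equivalently}\qquad x\circ y=\phi(x)+\psi(y).
\end{equation*}
A direct check shows that $(Q,+)$ is a loop whose two-sided identity is $0:=e\circ e$, and that $\phi(e)=\psi(e)=0$. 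At this stage nothing has been used beyond the quasigroup axioms; the entire content of the theorem is to upgrade $(Q,+)$ to an abelian group and $\phi,\psi$ to commuting automorphisms.

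The engine is the clean sub-lemma that \emph{a loop satisfying the medial identity is an abelian group}: setting two of the four arguments equal to the identity in $(a+b)+(c+d)=(a+c)+(b+d)$ gives commutativity ($b+c=c+b$, from $a=d=0$) and the exchange law $(a+b)+c=(a+c)+b$ (from $d=0$), while $a=0$ gives the companion relation $b+(c+d)=c+(b+d)$; combining the exchange law with commutativity and the companion relation yields associativity. Accordingly, I would substitute $x\circ y=\phi(x)+\psi(y)$ into the medial identity for $\circ$, obtaining the functional equation
\begin{equation*}
\phi\bigl(\phi(x)+\psi(y)\bigr)+\psi\bigl(\phi(z)+\psi(w)\bigr)=\phi\bigl(\phi(x)+\psi(z)\bigr)+\psi\bigl(\phi(y)+\psi(w)\bigr),
\end{equation*}
and then specialize its four arguments to $e$ in turn, so that the corresponding $\phi$- or $\psi$-values collapse to $0$. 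These specializations are designed to show, first, that $(Q,+)$ itself satisfies the medial law — and hence, by the sub-lemma, is an abelian group — and, second, that each of $\phi,\psi$ is affine, i.e. $\phi(x)=\Phi(x)+k$ and $\psi(y)=\Psi(y)+m$ for automorphisms $\Phi,\Psi$ of $(Q,+)$ fixing $0$ and constants $k=\phi(0)$, $m=\psi(0)$.

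Once the affine form is in hand the conclusion is immediate: $x\circ y=\Phi(x)+\Psi(y)+g$ with $g:=k+m$, and substituting this expression back into the medial identity and comparing the contributions of the second and third variables forces $\Phi\Psi=\Psi\Phi$, giving the required commuting automorphisms. I expect the main obstacle to lie in the middle step: all of the manipulations there take place in a structure that is so far only known to be a \emph{loop}, so one may not freely reassociate nor assume that $\phi,\psi$ fix the identity, and the constants $k,m$ genuinely need not vanish — the example $x\circ y=x+y+1$ on $\mathbb{Z}_2$ already shows that $g$ cannot in general be removed. The deductions must therefore be sequenced carefully — commutativity first, then associativity, and only then the affine and commuting-automorphism structure — with the constants tracked throughout, since a premature cancellation collapses $g$ to $0$ and produces a false statement.
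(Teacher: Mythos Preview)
The paper does not supply its own proof of this classical theorem: immediately after the statement it simply writes ``For the proof, see \cite[Theorem~2.10]{Belousov} and also \cite{Shcherbacov05} for further developments.'' So there is no in-paper argument to compare against; the result is quoted as background for the later study of the idempotent quasigroup $\Idm(\mathbb{A})$.

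Your outline is the standard textbook route (essentially the one in Belousov that the paper cites): pass to a principal loop isotope via $x+y=R_e^{-1}(x)\circ L_e^{-1}(y)$, use mediality of $\circ$ to force the loop $(Q,+)$ to satisfy the medial identity, invoke the sub-lemma that a medial loop is an abelian group, and then read off the affine form $x\circ y=\Phi(x)+\Psi(y)+g$ together with $\Phi\Psi=\Psi\Phi$. The sequencing you describe (commutativity, then associativity, then the affine and commuting structure, tracking the constants $k,m$) is exactly the right discipline, and your caution that one is working in a mere loop until associativity is established is well placed. One small efficiency worth noting: the specialization $x=w=e$ in the medial law for $\circ$ gives $\psi(y)\circ\phi(z)=\psi(z)\circ\phi(y)$ directly, and after translating to $+$ and then setting $z=e$ this already yields $\phi\psi=\psi\phi$ without needing the full group structure first; similar targeted specializations (e.g.\ $z=w=e$ and $x=y=e$) give the quasi-endomorphism identities $R_0(x\circ y)=\phi(x)\circ\phi(y)$ and $L_0(x\circ y)=\psi(x)\circ\psi(y)$, which streamline the passage to the affine form. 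None of this changes the substance of your plan, which is sound.
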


This implies that every medial quasigroup is \textit{isotopic} to an abelian group.  For the proof,  see \cite[Theorem~2.10]{Belousov} and also \cite{Shcherbacov05} for further developments.


\subsection{Main results}\label{sec:main}

Medial algebras constitutes a rather big class. In this paper, we completely classify one particular case. More precisely, we shall prove the following result.

\begin{theorem}
For a fixed dimension $n\ge2$, there exists exactly one isomorphy class of medial generic  isospectral algebras over an algebraically closed field $\Field$. Furthermore, any medial generic  isospectral algebra  $\mathbb{A}$ satisfies the identity
\begin{equation}\label{Bn}
x^{n+1}=\beta_n(x)x,\qquad x\in\mathbb{A},
\end{equation}
where $n=\dim \mathbb{A}$ and $\beta_n:\mathbb{A}\to \Field$ is a multiplicative homomorphism.
\end{theorem}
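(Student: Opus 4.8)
The plan is to reduce the classification to the combinatorics of the idempotent set, to identify that set as a cyclic medial quasigroup, to reconstruct the algebra from it, and finally to read off the power identity. I would first record that mediality makes $\Idm_0(\mathbb{A})$ multiplicatively closed: for idempotents $c,d$ one has $(cd)^2=(cc)(dd)=c^2d^2=cd$. Since $\mathbb{A}$ is generic and isospectral, the Peirce spectrum of each idempotent consists of nonzero $n$-th roots of unity (the cyclotomic spectrum of \cite{KrTk18a}), so $0\notin\spec(c)$ and $\chi_c(0)=\det L_c\neq0$; thus every $L_c$ is invertible. Consequently the product of two nonzero idempotents is again nonzero (else $cd=L_c(d)=0$ forces $d=0$), and left multiplication restricts to an injective, hence bijective, self-map of the finite set $\Idm(\mathbb{A})$. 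Therefore $Q:=(\Idm(\mathbb{A}),\cdot)$ is a finite commutative idempotent medial quasigroup, and by genericity $|Q|=2^n-1$. Invoking the Bruck--Murdoch--Toyoda Theorem~\ref{th:murd}, commutativity forces the two Toyoda automorphisms to coincide and idempotency then pins the operation to $x\circ y=\tfrac12(x+y)$ on an abelian group $(G,+)$ of order $2^n-1$, exactly reproducing rule~\eqref{nizerule7} from Example~\ref{ex2}.

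The decisive point is to upgrade $G$ from an arbitrary abelian group to the cyclic group $\mathbb{Z}_{2^n-1}$ and to show that $Q$ determines $\mathbb{A}$. Here I would exploit that the $2^n-1$ idempotents span the $n$-dimensional space and are subject to the Euler--Jacobi syzygies of Theorem~\ref{th4}, while simplicity of the $n$ distinct cyclotomic eigenvalues (forced since they live in dimension $n$) yields a $\mathbb{Z}_n$-grading with one-dimensional homogeneous components. Passing to the free space $\Field[G]$ and decomposing into character spaces, the multiplication $c_g c_h=c_{(g+h)/2}$ diagonalizes; matching the resulting $(2^n-1)$-dimensional character data against the $n$-dimensional simple cyclotomic grading, together with the syzygy \eqref{EuJa2}, should both force $G\cong\mathbb{Z}_{2^n-1}$ and recover the algebra multiplication intrinsically from $Q$, giving uniqueness. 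For existence I would produce the model explicitly as a strong isotope of $D=\Field[z]/(z^n-1)$: applying the discrete Fourier (Vandermonde) change of basis and a diagonal scaling by roots of unity to the componentwise product of $D\cong\Field^n$ yields a commutative medial product, and a direct computation identifies its idempotents with the cyclic family $\{c_i\}_{i\in\mathbb{Z}_{2^n-1}}$, $c_ic_j=c_{(i+j)/2}$, each carrying the simple cyclotomic spectrum; this is the promised isotopic deformation of $\Field[z]/(z^n-1)$.

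Finally I would establish \eqref{Bn} by computing in the model. Fixing an idempotent $c$ and the adapted basis $u_0=c,u_1,\dots,u_{n-1}$ diagonalizing the fusion $\mathbb{A}_c(\epsilon_n^{i})\mathbb{A}_c(\epsilon_n^{j})\subseteq\mathbb{A}_c(\epsilon_n^{i+j})$, the operator $L_x^{\,n}$ acts on the generator $x$ as the scalar $\beta_n(x)$ inherited from the norm (the determinant of the regular representation) of $D$, so that $x^{n+1}=L_x^{\,n}x=\beta_n(x)x$; multiplicativity $\beta_n(xy)=\beta_n(x)\beta_n(y)$ then mirrors multiplicativity of that norm, just as $\beta_2$ and $\beta_3$ arise in Examples~\ref{ex1} and~\ref{ex2}. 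The step I expect to be hardest is the cyclicity-and-rigidity argument of the second paragraph: refining Bruck--Murdoch--Toyoda from ``isotopic to some abelian group'' down to the single cyclic group $\mathbb{Z}_{2^n-1}$, and proving that $Q$ rigidly reconstructs $\mathbb{A}$, is precisely where the genericity syzygies and the exact cyclotomic Peirce grading must be used in tandem rather than separately.
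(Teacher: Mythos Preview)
Your existence construction and the derivation of \eqref{Bn} match the paper's: it builds $\mathscr{C}_n=\Field[z]/(z^n-1)$ with $p\circ q=p(\epsilon_n z)q(\epsilon_n z)$, verifies mediality, genericity, and isospectrality, and proves $L_p^{\circ n}=\Delta(p)\mathbf{1}$ with $\Delta$ multiplicative (Theorem~\ref{the:model}, Propositions~\ref{pro:Rgeneric}--\ref{pro:Risospectral}). Your quasigroup analysis of $\Idm(\mathbb{A})$ is also correct and appears in the paper (Propositions~\ref{pro:again}, \ref{pro:abelian}, Theorem~\ref{the:Zn}), including cyclicity $G\cong\mathbb{Z}_{2^n-1}$---though the paper proves cyclicity by a different device than character matching: it shows that a putative exponent $m<2^n-1$ would force a degree-$k\le n-1$ nonassociative polynomial $H$ with $H(c)=c_0$ for all idempotents, contradicting the syzygy~\eqref{syzy1}.

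Where your proposal diverges, and where the gap lies, is uniqueness. The paper does \emph{not} deduce uniqueness from the quasigroup $Q$. Instead it fixes one idempotent $c$, picks $w_1\in\mathbb{A}_c(\epsilon_n)$, shows via mediality and absence of $2$-nilpotents that all principal powers $w_1^m$ are nonzero (Proposition~\ref{pro:w}), so $\{w_1^m\}_{m=0}^{n-1}$ is a Peirce eigenbasis, and computes the structure constants explicitly:
\[
w_1^k\,w_1^m=\epsilon_n^{-(k-1)(m-1)}\,w_1^{k+m}
\]
(Proposition~\ref{pro:weakly}). These constants depend only on $n$, so any two such algebras of the same dimension are isomorphic. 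This is short and entirely internal to a single idempotent.

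Your route instead asks $Q=(\Idm(\mathbb{A}),\cdot)$ to reconstruct $\mathbb{A}$, and this step is not substantiated. The quasigroup records only \emph{which} idempotent equals $c_ic_j$; it says nothing about the $2^n-1-n$ linear relations among the idempotents in the $n$-dimensional space. Two quotients of $\Field[Q]$ sharing the same multiplication table on generators need not be isomorphic as algebras. Your sketch---``match character data against the cyclotomic grading together with syzygy~\eqref{EuJa2}''---does not close this: \eqref{EuJa2} yields a single relation $\sum_c c=0$, and decomposing $\Field[\mathbb{Z}_{2^n-1}]$ under $a\mapsto 2a$ produces \emph{several} summands of various sizes (one per orbit), so singling out the correct $n$-dimensional quotient still requires an argument you have not supplied. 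In practice, identifying that quotient forces you back to analyzing an eigenbasis of some $L_c$---which is exactly the paper's direct proof. So the quasigroup material, while correct, is developed in the paper \emph{after} uniqueness (Section~\ref{sec:structure}) and is not used to establish it.
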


The key ingredient of the uniqueness part of the above theorem is the  Peirce decomposition of a medial isospectral algebra. Given an idempotent $c\in \Idm(\mathbb{A})$ we prove that  there exist $w_1\in\mathbb{A}$ such that  $\{w_1^i\}_{i=0}^{n-1}$ (where $w_1^0=c$) and the algebra $\mathbb{A}$ decomposes into the direct sum of one-dimensional eigespaces spanned on the principal powers $w_1^i$ (for the definition, see \eqref{princip} below):
$$
A=\bigoplus_{i=0}^{n-1}\Span{w_1^i}, \qquad cw_1^i=\epsilon^{i}w_1^i.
$$
Then the multiplication between the eigenvectors is given by
\begin{equation}\label{kronecker0}
w_1^i w_1^j=\epsilon_n^{\delta_{ij,0}-(i-1)(j-1)}w_1^{i+j},
\end{equation}
where $\delta_{i,j}$ is the Kronecker delta. The latter relations  determine the multiplication structure on a medial isospectral algebra up to an isomorphism.

The existence part follows from the following result which we prove in Section~\ref{sec:polynom}.

\begin{theorem}
Let $n\ge 2$. Consider the quotient
$$
\mathscr{C}_n=\Field[z]/(z^n-1)
$$ with  a new multiplication $\circ$ on $\mathscr{C}_n$ defined by
\begin{equation}\label{mmul}
p(z)\circ q(z)=p(\epsilon_n z)q(\epsilon_n z) \mod z^n-1,
\end{equation}
where $\epsilon_n$ is a primitive root of unity of order $n$. Then the algebra $\mathscr{C}_n$ is a medial isospectral algebra over $\mathbb{C}$ of dimension $n$. Furthermore, the corresponding multiplicative homomorphism \eqref{Bn} is given explicitly by
\begin{equation}\label{PN}
\beta(p(z))=p(1)p(\epsilon_n)p(\epsilon_n^2 )\cdots p(\epsilon_n^{n-1}):\mathscr{C}_n\to \Field.
\end{equation}
\end{theorem}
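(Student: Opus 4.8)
The plan is to trivialize the new product by passing to coordinates in which it becomes a \emph{twisted pointwise multiplication}. Write $\tau\colon \mathscr{C}_n\to\mathscr{C}_n$ for the linear map $\tau(p)(z)=p(\epsilon_n z)$. Since $\epsilon_n^n=1$ we have $\tau(z^n-1)=z^n-1$, so $\tau$ descends to the quotient, and $(pq)(\epsilon_n z)=p(\epsilon_n z)q(\epsilon_n z)$ shows that $\tau$ is an automorphism of the \emph{associative} algebra $\mathscr{C}_n$ (its inverse is $p(z)\mapsto p(\epsilon_n^{-1}z)$). The definition \eqref{mmul} then reads simply $p\circ q=\tau(p)\,\tau(q)$, where juxtaposition is the usual product mod $z^n-1$. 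Over $\mathbb{C}$ the evaluation map $p\mapsto (p(1),p(\epsilon_n),\dots,p(\epsilon_n^{n-1}))$ is an isomorphism of $\mathscr{C}_n$ (usual product) onto $\Field^n$ with coordinatewise multiplication; in these coordinates $\tau$ becomes the cyclic shift $S$, $(Su)_k=u_{k+1}$ (indices mod $n$), whence
$$
(p\circ q)_k=p_{k+1}\,q_{k+1},\qquad p_k:=p(\epsilon_n^k).
$$
Commutativity is immediate, and mediality is equally formal: as $\tau$ is an algebra homomorphism,
$$
(p\circ q)\circ(r\circ s)=\tau^2(p)\tau^2(q)\tau^2(r)\tau^2(s),
$$
which is symmetric under $q\leftrightarrow r$ by commutativity of the usual product, hence equals $(p\circ r)\circ(q\circ s)$.

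Next I would determine the idempotents and their spectra in these coordinates. The equation $p\circ p=p$ reads $p_{k+1}^2=p_k$ for all $k$. Setting $t=p_{n-1}$ and solving the chain backwards gives $p_{n-1-j}=t^{2^j}$, and closing the cycle forces either $p=0$ or $t^{2^n-1}=1$; distinct roots $t$ recover distinct values $p_{n-1}=t$, so this yields exactly $2^n-1$ nonzero idempotents together with the zero one, matching the B\'ezout count and showing $\mathscr{C}_n$ is generic. For the spectrum, the operator $L^\circ_c\colon x\mapsto c\circ x$ acts by $(L^\circ_c x)_k=c_{k+1}x_{k+1}$, i.e.\ it is the weighted cyclic shift $S\,\mathrm{diag}(c_0,\dots,c_{n-1})$. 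Solving $L^\circ_c v=\lambda v$ gives $v_{k+1}=(\lambda/c_{k+1})v_k$, and periodicity forces $\lambda^n=\prod_k c_k$. For a nonzero idempotent $\prod_k c_k=\prod_{j=0}^{n-1}t^{2^j}=t^{2^n-1}=1$, so the eigenvalues are precisely the $n$th roots of unity $\{1,\epsilon_n,\dots,\epsilon_n^{n-1}\}$, independent of $c$; hence $\mathscr{C}_n$ is isospectral with the cyclotomic Peirce spectrum.

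Finally I would verify the homomorphism $\beta$ and the power identity \eqref{Bn}. In coordinates $\beta(p)=\prod_k p(\epsilon_n^k)=\prod_k p_k$, and since the full product over a period is shift-invariant, $\beta(p\circ q)=\prod_k p_{k+1}q_{k+1}=\beta(p)\beta(q)$, so $\beta$ is $\circ$-multiplicative. For the principal powers I would prove by induction on $m\ge 2$ that
$$
(x^{m})_k=x_{k+m-1}\prod_{j=1}^{m-1}x_{k+j},
$$
the inductive step $x^{m+1}=x\circ x^{m}$ being immediate from $(p\circ q)_k=p_{k+1}q_{k+1}$. Evaluating at $m=n+1$ and using $x_{k+n}=x_k$ together with $\prod_{j=1}^{n}x_{k+j}=\prod_k x_k=\beta(x)$ gives $(x^{n+1})_k=\beta(x)\,x_k$, which is exactly \eqref{Bn}.

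The computations in all three steps become routine once the coordinate picture is fixed; the single genuine idea is the initial observation that $\circ$ is the $\tau$-twist $p\circ q=\tau(p)\tau(q)$ of the pointwise product, from which mediality, the idempotent count, and the spectrum all follow mechanically. The only mildly delicate point is the idempotent chain $p_{k+1}^2=p_k$: one must check that the $2^n-1$ values of $t$ produce \emph{distinct} nonzero idempotents and that the sole degenerate solution is $p=0$, which simultaneously secures genericity and the normalization $\prod_k c_k=1$ underlying the isospectral conclusion.
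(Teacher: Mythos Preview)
Your proof is correct. The mediality argument and the multiplicativity of $\beta$ are essentially the same as in the paper: both observe that $(p\circ q)\circ(r\circ s)\equiv p(\epsilon_n^2 z)q(\epsilon_n^2 z)r(\epsilon_n^2 z)s(\epsilon_n^2 z)$ is totally symmetric, and both verify $\beta(pq)=\beta(p)\beta(q)$ by the shift-invariance of the full product $\prod_k p(\epsilon_n^k)$; your phrasing via the automorphism $\tau$ is just a tidy repackaging of this.

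Where you genuinely diverge is in establishing genericity and isospectrality. The paper argues indirectly: it first proves $L_{p}^{\circ n}=\Delta(p)\mathbf{1}$ by iterating in the polynomial model, deduces from the idempotent relation $p^2(\epsilon_n^{k+1})=p(\epsilon_n^k)$ that $\Delta(c)=1$ for every nonzero idempotent, exhibits the explicit eigenvectors $z^k p(z)$, then invokes the general criteria developed earlier (absence of $2$-nilpotents plus $\tfrac12\notin\sigma(\mathscr{C}_n)$ for genericity, and Corollary~\ref{cor:isosp} on reduced medial algebras for isospectrality). You instead pass to evaluation coordinates $p_k=p(\epsilon_n^k)$, where the idempotent equation becomes the explicit chain $p_{k+1}^2=p_k$; this lets you \emph{count} the $2^n-1$ nonzero idempotents directly and read off the spectrum of $L_c^\circ$ as a weighted cyclic shift with determinant $\prod_k c_k=1$. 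Your route is more self-contained and bypasses the abstract machinery entirely, at the cost of being specific to this model; the paper's route illustrates how $\mathscr{C}_n$ fits into the general theory it has built. Both reach the identity $x^{n+1}=\beta(x)x$: the paper via $L_x^{\circ n}=\Delta(x)\mathbf{1}$, you via the closed formula $(x^m)_k=x_{k+m-1}\prod_{j=1}^{m-1}x_{k+j}$.
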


\subsection{Idempotents as a quasigroup}
In other words, any medial isospectral algebra is isomorphic to some $\mathscr{C}_n$. It is interesting to study the idempotents of such algebras.
While the algebra structure on $\mathscr{C}_n$ and the multiplicative homomorphism $\beta(z)$ are given very  explicitly by \eqref{mmul} and \eqref{PN}, the structure of idempotents of $\mathscr{C}_n$ is more sophisticated. When $n\le 6$, it is possible to describe the structure of $\Idm(\mathscr{C}_n)$ is a straightforward way. In order to understand the relations between idempotents in a  satisfactory way, one need to look at $\Idm(\mathscr{C}_n)$ as a quasigroup.

To this end, note that it follows from \eqref{magma} and the fact that the Peirce spectrum of an isospectral algebra does not contain $0$ that the product of two nonzero  idempotents  in a medial algebra $\mathbb{A}$ is always a nonzero idempotent  and the relation $xy=z$ uniquely determines one  idempotent if two others are given. This implies that $\Idm(\mathbb{A})$ is a commutative idempotent medial (CIM, for short) quasigroup with respect to the algebra multiplication. Then the classical  Bruck-Murdoch-Toyoda theorem characterizes any medial quasigroup as an isotope of a certain abelian group $G$. The structure of a CIM quasigroups and their endomorphisms has been recently studied by A.~Leibak and P.~Puusemp \cite{Leibak2014}, \cite{LeibakPuusemp}. In \cite{Leibak2014} a refinement of the Bruck-Murdoch-Toyoda theorem for CIM quasigroups has also been established. A \textbf{natural question arise}: Given a medial isospectral algebra $\mathbb{A}$, how to characterize the underlying  abelian group $G$? We establish that $G$ must be the cyclic group $\mathbb{Z}_{2^n-1}$, $n=\dim \mathbb{A}$.

More precisely, we have

\begin{theorem}\label{the:Znn}
Let $\mathbb{A}$ be a medial isospectral algebra of dimension $n$. Then the there exists a bijection $\phi:\Idm(\mathbb{A})\to \mathbb{Z}_{2^n-1}$  such that
$$
\phi(xy)\equiv \frac{1}{2}(\phi(x)+\phi(y))\mod 2^n-1.
$$
\end{theorem}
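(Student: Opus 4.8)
The plan is to work entirely inside the explicit model and to read off the bijection from Fourier coordinates. Since, as recorded just above, any medial isospectral algebra $\mathbb{A}$ of dimension $n$ is isomorphic to $\mathscr{C}_n=\Field[z]/(z^n-1)$, and an algebra isomorphism restricts to a product-preserving bijection $\Idm(\mathbb{A})\to\Idm(\mathscr{C}_n)$, it suffices to produce $\phi$ for $\mathbb{A}=\mathscr{C}_n$; I will take the ground field to be $\C$, as in the model.

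First I would diagonalize the medial product by the Chinese Remainder Theorem. With $\epsilon=\epsilon_n$ and $z^n-1=\prod_{k=0}^{n-1}(z-\epsilon^k)$, the evaluation map $\Phi(p)=(p(\epsilon^k))_{k=0}^{n-1}$ is a linear isomorphism $\mathscr{C}_n\to\C^n$. Because $(p\circ q)(\epsilon^k)=p(\epsilon^{k+1})q(\epsilon^{k+1})$, in these coordinates the product \eqref{mmul} becomes
\[
x\circ y=S(x\cdot y),
\]
where $\cdot$ is the componentwise product on $\C^n$ and $S$ is the cyclic shift $(Sv)_k=v_{k+1}$ (indices mod $n$). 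Consequently $x=(x_0,\dots,x_{n-1})$ is idempotent iff $x_{k+1}^2=x_k$ for every $k$.

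The core step is to solve this circular system. Reading it in decreasing index, $x_{n-1}=x_0^2,\ x_{n-2}=x_0^4,\dots$, so every coordinate is obtained from $x_0$ by repeated squaring, $x_k=x_0^{2^{-k}}$ with the exponent read in $\mathbb{Z}_{2^n-1}$ (legitimate since $2^n\equiv1\pmod{2^n-1}$, so $2$ is invertible); the single closure condition is $x_0=x_0^{2^n}$, i.e. $x_0\in\mu_{2^n-1}$, the group of $(2^n-1)$-th roots of unity. Thus there are exactly $2^n-1$ nonzero idempotents, in agreement with the genericity count, and $x\mapsto x_0=p(1)$ is a bijection $\Idm(\mathscr{C}_n)\to\mu_{2^n-1}$. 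Fixing a primitive root $\omega=e^{2\pi\sqrt{-1}/(2^n-1)}$ and writing $x_0=\omega^m$, I define $\phi(x)=m\in\mathbb{Z}_{2^n-1}$.

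Finally I would verify the midpoint law. For idempotents $x\leftrightarrow m$ and $y\leftrightarrow m'$ one has $x_1=\omega^{m2^{-1}}$ and $y_1=\omega^{m'2^{-1}}$, while the leading coordinate of $x\circ y=S(x\cdot y)$ is $(x\circ y)_0=x_1y_1$, so
\[
\phi(x\circ y)=\tfrac12(m+m')=\tfrac12(\phi(x)+\phi(y))\bmod 2^n-1,
\]
which is the assertion. Conceptually this pins down the abelian group underlying the CIM quasigroup $\Idm(\mathbb{A})$ (supplied by the Bruck-Murdoch-Toyoda theorem together with its Leibak refinement) as the cyclic group $\mathbb{Z}_{2^n-1}$ with midpoint operation. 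I expect the main obstacle to be the middle step: verifying that the circular square-root recursion $x_{k+1}^2=x_k$ closes up to give \emph{precisely} $2^n-1$ root-of-unity solutions, where the arithmetic identity $2^n\equiv1\pmod{2^n-1}$ is exactly what forces both the closure and the appearance of the factor $\tfrac12=2^{-1}$.
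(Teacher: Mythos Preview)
Your proof is correct and takes a genuinely different route from the paper's.

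The paper argues intrinsically: it first shows (Proposition~\ref{pro:abelian}) that $Q=(\Idm(\mathbb{A}),\boxplus)$ with $x\boxplus y=L_c^{-1}(xy)$ is an abelian group of order $2^n-1$, and then proves in Theorem~\ref{the:Zn} that $Q$ is \emph{cyclic} by a contradiction built on the syzygy identity~\eqref{syzy1}. Namely, if the exponent of $Q$ were some $m<2^n-1$, the binary expansion $m=\sum 2^{m_i}$ together with $2\otimes x=T(x)$ (where $T=L_c^{-1}$) lets one rewrite $m\otimes x$ as a nonassociative polynomial $H(x)$ of degree at most $n-1$ with $H(x)=c$ on $\Idm(\mathbb{A})$; summing over idempotents yields $(2^n-1)c=0$ by~\eqref{syzy1}, a contradiction. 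The midpoint formula then drops out of the isomorphism $Q\cong\mathbb{Z}_{2^n-1}$ combined with $\phi(T(u))=2\phi(u)$.

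You instead invoke the uniqueness Theorem~\ref{the:uniq} to pass to the model $\mathscr{C}_n$, diagonalize via the evaluation map $p\mapsto(p(\epsilon_n^k))_k$, and solve the resulting circular recursion $x_{k+1}^2=x_k$ explicitly, obtaining a direct parametrization $\Idm(\mathscr{C}_n)\to\mu_{2^n-1}$ by $p\mapsto p(1)$. This is more elementary and makes the bijection $\phi$ completely explicit; it also bypasses the syzygy machinery entirely, the arithmetic fact $2^n\equiv 1\pmod{2^n-1}$ doing the work that the Euler--Jacobi formula does in the paper. What the paper's approach buys is that it never leaves the abstract algebra $\mathbb{A}$ and illustrates a structural use of Theorem~\ref{th4}; what yours buys is a concrete description of every idempotent of $\mathscr{C}_n$ and a one-line verification of the midpoint law.
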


\section{Medial algebras}\label{sec:medial}

\subsection{General properties}
Obviously, a zero algebra (i.e. an algebra with $\mathbb{A}\mathbb{A}=0$) is medial. To avoid this trivial case, we shall always assume that all algebras below are nonzero.

Let $\mathbb{A}$ be a medial algebra. Then the specialization $x=z$ and $y=w$ in \eqref{magma} implies
\begin{equation}\label{medial0}
(xy)^2=x^2y^2, \qquad \forall x,y \in\mathbb{A}.
\end{equation}

It turns out that the identity \eqref{medial0} is essentially equivalent to the medial magma identity \eqref{magma}. More precisely, we have

\begin{proposition}\label{pro:med}
If $\Field$ is a field of characteristic not $2$ or $3$ then \eqref{medial0} and \eqref{magma} are equivalent.
\end{proposition}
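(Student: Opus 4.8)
The plan is to establish the nontrivial implication, that \eqref{medial0} forces the full medial law \eqref{magma}; the reverse implication is precisely the specialization $x=z$, $y=w$ already recorded just before the statement. The strategy is a double polarization of \eqref{medial0} followed by a short symmetry argument, which is exactly where the exclusion of characteristic $3$ will enter.

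First I would linearize \eqref{medial0} in a single slot. Writing $G(x,y)=(xy)^2-x^2y^2$, the identity $G(\,\cdot\,,y)\equiv 0$ is homogeneous of degree $2$, so substituting $x\mapsto x+z$ and subtracting $G(x,y)+G(z,y)$ yields $2\bigl((xy)(zy)-(xz)y^2\bigr)=0$. Dividing by $2$ (this is the only place $\Char\Field\ne 2$ is needed) gives the intermediate identity $(xy)(zy)=(xz)y^2$. Polarizing this identity once more, now in $y$, by replacing $y\mapsto y+w$ and subtracting its $y$- and $w$-instances, produces
\[
(xy)(zw)+(xw)(zy)=2(xz)(yw),
\]
where the factor $2$ now appears on the right of its own accord. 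All of these manipulations are exact identities valid for arbitrary elements, so no hypothesis on the cardinality of $\Field$ is required.

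The crux is to read this relation correctly. By commutativity, the value of a product $(ab)(cd)$ depends only on the partition of $\{a,b,c,d\}$ into the two unordered pairs $\{a,b\}$ and $\{c,d\}$. Thus the three products
\[
A=(xy)(zw),\qquad B=(xw)(zy),\qquad C=(xz)(yw)
\]
realize the three distinct ways of splitting $\{x,y,z,w\}$ into two pairs, and the displayed relation reads simply $A+B=2C$; the medial law \eqref{magma} is exactly the assertion $A=C$. Since the relation holds identically, I may substitute $(x,y,z,w)\mapsto(x,z,y,w)$, that is, interchange the roles of $y$ and $z$. On the level of pairings this transposition fixes $B$ and swaps $A$ with $C$, so the same relation becomes $C+B=2A$. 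Subtracting the two instances gives $3(A-C)=0$, and because $\Char\Field\ne 3$ we conclude $A=C$, which is \eqref{magma}.

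The computation itself is routine; the one point deserving care—and the step I would flag as the heart of the matter—is the passage from the single symmetric relation $A+B=2C$ to the medial identity. It is commutativity that lets one regard $A,B,C$ as functions of the three pairings, and precisely this viewpoint makes a single variable swap produce a second, genuinely independent relation $C+B=2A$; together these two relations force $A=C$ once $3$ is invertible. This also explains transparently why both characteristics $2$ and $3$ must be excluded.
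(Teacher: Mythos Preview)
Your proof is correct and follows essentially the same route as the paper's: linearize \eqref{medial0} twice to obtain the relation $(xy)(zw)+(xw)(zy)=2(xz)(yw)$, then exploit its symmetry under permutations of the four variables to extract a factor of $3$. Your final step---recognizing $A,B,C$ as the three pairings and subtracting two instances of the relation---is a cleaner packaging of the same argument (the paper writes out three permuted copies and sums them, one of which is in fact redundant by commutativity), but the underlying idea is identical.
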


\begin{proof}
It suffices to show that \eqref{medial0} implies \eqref{magma}. To this end, note that the linearization of \eqref{medial0} yields
\begin{equation}\label{medial0a}
(xy)(xz)=x^2(yz), \qquad \forall x,y,z \in\mathbb{A},
\end{equation}
and a further linearization in $x$ yields
\begin{equation}\label{medial0b}
(xy)(wz)+(wy)(xz)=2(xw)(yz), \qquad \forall x,y,z,w \in\mathbb{A}.
\end{equation}
Permuting the variables we obtain
\begin{align*}
(xz)(wy)+(wz)(xy)&=2(xw)(yz)\\
(xz)(yw)+(yz)(xw)&=2(xy)(wz),
\end{align*}
and summing up the three latter identities yields
$$
2(xy)(wz)+3(wy)(xz)+(yz)(xw)=4(xw)(yz)+2(xy)(wz),
$$
hence $3(wy)(xz)=3(xw)(yz),$ which implies  \eqref{magma} (after a suitable renaming of variables).
\end{proof}

Thus, when working with algebras over a field with characteristic $2$ or $3$ one need to be careful with the defining identities. In this paper, we work with $\mathrm{char}(\Field) \ne 2,3$, therefore we shall not distinguish the definitions.

The identity  \eqref{medial0} is remarkable in several aspects. First note that it is equivalent to that the square mapping $\Psi(x)=x^2$ is a multiplicative homomorphism and (under various names) has attracted much attention, primarily in the context of quasigroups. Furthermore, algebras with \eqref{medial0} appear, for example, in the study of train algebras satisfying polynomial identities \cite{LabraR}.  Also, we mention that \eqref{medial0} resembles the Norton inequality
$
\scal{x^2}{y^2}\ge \scal{xy}{xy}
$
which appears as an axiom in construction of the Griess-Conway-Norton algebra of the monster sporadic simple group and also in general  Majorana algebras, see \cite{MeyerNeutsch}, \cite{Ivanov09}. Note also that the latter inequality is defined in the presence of an invariant bilinear form $\scal{}{}$.

We have the following elementary corollaries of the definitions.

\begin{proposition}
\label{pro:suff}
A subalgebra of a medial algebra is medial. Furthermore, if some basis $\{e_i\}_{i=1}^n$ of a commutative algebra $\mathbb{A}$ then $\mathbb{A}$ is medial if and only if $\{e_i\}_{i=1}^n$ satisfies \eqref{magma}.
\end{proposition}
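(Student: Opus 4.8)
The plan is to reduce both assertions to the observation that the two sides of the medial identity \eqref{magma} are multilinear in their four arguments, so that the identity is completely determined by its values on a spanning set.

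First I would dispose of the subalgebra claim, which requires no computation. If $\mathbb{B}$ is a subalgebra of a medial algebra $\mathbb{A}$, then any $x,y,z,w\in\mathbb{B}$ in particular lie in $\mathbb{A}$, where \eqref{magma} holds; since $\mathbb{B}$ is closed under the product, the intermediate products $xy$, $zw$, $xz$, $yw$ all remain in $\mathbb{B}$, and so the identity holds in $\mathbb{B}$ as well.

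For the second assertion the forward implication is trivial, so the substance lies in the converse. The key step is to note that the map
$$
T(x,y,z,w):=(xy)(zw)-(xz)(yw)
$$
is $\Field$-multilinear, i.e. linear separately in each of its four slots. This is immediate from the bilinearity of the algebra product: holding the other variables fixed, $x\mapsto xy$ is linear, hence so is $x\mapsto (xy)(zw)$, and the same applies to the second summand and to each variable. Granting this, I would expand $x=\sum_i x_ie_i$ and likewise $y,z,w$, and invoke multilinearity to write
$$
T(x,y,z,w)=\sum_{i,j,k,l} x_iy_jz_kw_l\,T(e_i,e_j,e_k,e_l).
$$
By hypothesis each coefficient $T(e_i,e_j,e_k,e_l)$ vanishes, so the entire sum is identically zero and \eqref{magma} holds throughout $\mathbb{A}$.

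I do not anticipate a genuine obstacle here. The whole argument hinges on recognizing that \eqref{magma} is a multilinear---rather than merely polynomial---identity, after which evaluation on the basis suffices. The only point meriting minor care is the verification that $T$ is multilinear, which follows in one line from the bilinearity of the product.
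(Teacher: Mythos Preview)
Your proof is correct and follows essentially the same approach as the paper: both dismiss the subalgebra claim as immediate and handle the nontrivial `if' direction by expanding $x,y,z,w$ in the basis and using multilinearity of $(x,y,z,w)\mapsto (xy)(zw)$ to reduce to the assumed identity on basis vectors. The only cosmetic difference is that you package the argument via the difference map $T(x,y,z,w)=(xy)(zw)-(xz)(yw)$, whereas the paper writes out the two fourfold sums directly.
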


\begin{proof}
The first claim is obvious. For the second claim it suffices to verify the `if' condition. To this end, suppose that a basis $\{e_i\}_{i=1}^n$ of a commutative algebra  $\mathbb{A}$ satisfies \eqref{magma}. Then for any quadruple  $(x,y,z,w)$ we have
\begin{align*}
(xy)(zw)&=\sum_{i,j,k,l=1}^n x_iy_jz_kw_l(e_ie_j)(e_ke_l)=\sum_{i,j,k,l=1}^n x_iy_jz_kw_l(e_ie_k)(e_je_l)=(xz)(yw)
\end{align*}
where $x=\sum_{i=1}^n x_ie_i$ etc. Hence $\mathbb{A}$ satisfies \eqref{magma}, as desired.
\end{proof}


\begin{definition}
Let $\mathbb{A}$ be a medial algebra. Define
$$
\mathbb{A}^{\Box} :=\{x\in \mathbb{A}: \quad \text{there exists $a\in \mathbb{A}$ with $x=a^2$}\}.
$$
\end{definition}
An immediate observation is
\begin{equation}
\label{square}
\Idm(\mathbb{A})\subset \mathbb{A}^{\Box}.
\end{equation}

\begin{proposition}
For any medial algebra $\mathbb{A}$, the set  $\mathbb{A}^{\Box}$ is a nontrivial multiplicative magma.
\end{proposition}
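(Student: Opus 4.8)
The plan is to treat the two assertions separately: that $\mathbb{A}^{\Box}$ is closed under the algebra multiplication (so that it is a magma), and that it is nontrivial. Each will follow from a single structural fact, so the argument is short and the only decision is which identity to invoke where.

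First I would establish closure directly from the specialized medial identity \eqref{medial0}, which holds in any medial algebra since it is the specialization $x=z$, $y=w$ of \eqref{magma}. Take any $x,y\in\mathbb{A}^{\Box}$ and, by the defining property of $\mathbb{A}^{\Box}$, choose $a,b\in\mathbb{A}$ with $x=a^2$ and $y=b^2$. Then
$$
xy=a^2b^2=(ab)^2\in\mathbb{A}^{\Box},
$$
the middle equality being exactly \eqref{medial0}. Hence the product of two squares is again a square, so $\mathbb{A}^{\Box}$ is closed under the multiplication inherited from $\mathbb{A}$ and therefore carries the structure of a multiplicative magma. This step uses nothing beyond \eqref{medial0}.

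For nontriviality I would invoke the standing assumption that $\mathbb{A}$ is nonzero, i.e.\ $\mathbb{A}\mathbb{A}\neq 0$, together with the polarization formula $xy=\tfrac12\bigl((x+y)^2-x^2-y^2\bigr)$, which is valid because $\mathrm{char}(\Field)\ne 2$. If every square vanished, this identity would force $xy=0$ for all $x,y\in\mathbb{A}$, contradicting $\mathbb{A}\mathbb{A}\neq 0$; hence some $a^2$ is nonzero. Since also $0=0^2\in\mathbb{A}^{\Box}$, the set $\mathbb{A}^{\Box}$ strictly contains $\{0\}$ and is thus nontrivial. (Alternatively, whenever idempotents exist, the inclusion $\Idm(\mathbb{A})\subset\mathbb{A}^{\Box}$ from \eqref{square} supplies nonzero elements directly.)

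I do not expect any genuine obstacle here: the closure is an immediate one-line consequence of \eqref{medial0}, and the magma axiom requires only closure, no associativity or units. The single point deserving care is the nontriviality claim, where one must not be content with the trivial membership $0\in\mathbb{A}^{\Box}$ but must actually produce a \emph{nonzero} square; the polarization argument is the cleanest way to convert the hypothesis $\mathbb{A}\mathbb{A}\neq 0$ into the existence of such an element.
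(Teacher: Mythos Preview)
Your proof is correct and follows essentially the same approach as the paper: closure via $xy=a^2b^2=(ab)^2$ from \eqref{medial0}, and nontriviality from the standing assumption that $\mathbb{A}$ is nonzero. If anything, you are slightly more careful, since you make explicit the polarization argument showing that $\mathbb{A}\mathbb{A}\ne 0$ forces some square to be nonzero, a step the paper leaves implicit.
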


\begin{proof}
Since $\mathbb{A}^{\Box}$ is nonzero algebra, there exists $a\in \mathbb{A}$ such that $a^2\ne 0$. Then $x=a^2\in \mathbb{A}^{\Box}$. Therefore $\mathbb{A}^{\Box}$ is nonempty. If $x,y\in \mathbb{A}^{\Box}$ then $x=a^2$, $y=b^2$, therefore $xy=(ab)^2\in \mathbb{A}^{\Box}$, i.e. $G$ is a multiplicative magma.
\end{proof}

\subsection{The quasigroup property}
The set of idempotents in a medial algebra, if nonempty, has a very special structure.

\begin{proposition}
\label{pro:idem}
If $c_1,c_2$ are idempotents in a medial algebra then so is $c_1c_2$ (maybe zero). In other words, the set of all idempotents in a medial algebra is a multiplicative magma.
\end{proposition}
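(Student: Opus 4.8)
The plan is to derive the statement directly from the square-homomorphism identity \eqref{medial0}, which has already been obtained by specializing the medial law via $x=z$, $y=w$. Recall that \eqref{medial0} asserts $(xy)^2 = x^2y^2$ for all $x,y\in\mathbb{A}$; this is the only ingredient I will need.

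First I would substitute $x=c_1$ and $y=c_2$ into \eqref{medial0}, obtaining $(c_1c_2)^2 = c_1^2 c_2^2$. Since $c_1$ and $c_2$ are idempotents, we have $c_1^2=c_1$ and $c_2^2=c_2$, so the right-hand side collapses to $c_1c_2$. Hence $(c_1c_2)^2 = c_1c_2$, which is precisely the assertion that $c_1c_2$ is an idempotent; the possibility $c_1c_2=0$ is not excluded, which accounts for the parenthetical ``maybe zero'' and places $c_1c_2\in\Idm_0(\mathbb{A})$. The second sentence of the proposition is then just a reformulation: closure of the idempotent set under the algebra multiplication is exactly the statement that it forms a multiplicative magma.

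I do not expect any genuine obstacle here, as the proposition is an immediate corollary of \eqref{medial0}; the only thing to be careful about is not to assert that the product is a \emph{nonzero} idempotent, since $\Idm(\mathbb{A})$ (nonzero idempotents) need not itself be closed at this level of generality. One could alternatively invoke the full medial identity \eqref{magma} with the quadruple $(x,y,z,w)=(c_1,c_2,c_1,c_2)$ to reach the same conclusion, but the derivation through \eqref{medial0} is the most economical.
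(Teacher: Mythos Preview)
Your proof is correct and matches the paper's approach exactly: the paper's proof consists of the single line ``Follows immediately from \eqref{medial0},'' and you have spelled out precisely that computation. Your additional remark about the possibility $c_1c_2=0$ and the alternative route via \eqref{magma} is accurate and helpful, though not strictly needed.
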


\begin{proof}
Follows immediately from \eqref{medial0}.
\end{proof}

\begin{proposition}
\label{pro:endom}
Let $\mathbb{A}$ be a medial algebra.  For any idempotent $c\in \Idm(\mathbb{A})$:
\begin{enumerate}[label=(\alph*)]
\item\label{ideal1}
$L_c(xy)=L_c(x)L_c(y)$ for all $x,y\in \mathbb{A}$, i.e. the multiplication operator $L_c:\mathbb{A}\to \mathbb{A}$ is an algebra endomorphism;
\item\label{ideal2}  $\mathbb{A}_c(0)$ is an ideal of $\mathbb{A}$;
\item\label{ideal3} $L_c(\mathbb{A})$ is a subalgebra of $\mathbb{A}$;
\item\label{ideal4} $\mathbb{A}_c(1)$ is a  subalgebra of $L_c(\mathbb{A})$ and $\dim \mathbb{A}_c(1)\ge 1$;
\item\label{ideal5} for any idempotents $c_1, c_2\in \Idm(\mathbb{A})$ the following transformation rule holds:
\begin{equation}\label{c1c2}
L_{c_2}L_{c_1}=L_{c_2c_1}L_{c_2}
\end{equation}
\end{enumerate}
\end{proposition}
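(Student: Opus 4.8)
The plan is to prove the five assertions in a logical order, deriving each from the medial identity in its linearized form \eqref{medial0a}, namely $(xy)(xz)=x^2(yz)$. The crucial observation is that every statement flows from part \ref{ideal1}, so I would establish \ref{ideal1} first and then harvest the remaining parts as consequences.

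For part \ref{ideal1}, I would specialize \eqref{medial0a} by taking the linearizing variable to be the idempotent $c$. Since $c^2=c$, substituting $x=c$ into $(xy)(xz)=x^2(yz)$ gives directly $(cy)(cz)=c(yz)$, which is precisely the statement $L_c(x)L_c(y)=L_c(xy)$ after renaming; thus $L_c$ is an algebra endomorphism. This is the only genuine computation, and I expect it to be the main (though modest) obstacle insofar as one must be careful that \eqref{medial0a} is the correct linearization to invoke. With \ref{ideal1} in hand, parts \ref{ideal2}, \ref{ideal3} and \ref{ideal4} are formal. For \ref{ideal2}, if $x\in\mathbb{A}_c(0)$ (so $L_c x=0$) and $y\in\mathbb{A}$ is arbitrary, then $L_c(xy)=L_c(x)L_c(y)=0\cdot L_c(y)=0$, whence $xy\in\mathbb{A}_c(0)$, so $\mathbb{A}_c(0)$ is an ideal. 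For \ref{ideal3}, the image of an algebra endomorphism is always a subalgebra, since $L_c(x)L_c(y)=L_c(xy)\in L_c(\mathbb{A})$.

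For part \ref{ideal4}, I would note that $\mathbb{A}_c(1)$ is the fixed-point set of $L_c$, hence $\mathbb{A}_c(1)\subset L_c(\mathbb{A})$ (any fixed vector $x$ equals $L_c x$). Closure under multiplication follows because if $L_c x=x$ and $L_c y=y$ then $L_c(xy)=L_c(x)L_c(y)=xy$, so $xy\in\mathbb{A}_c(1)$. The dimension bound $\dim\mathbb{A}_c(1)\ge 1$ holds because $c$ itself lies in $\mathbb{A}_c(1)$: indeed $L_c c=c^2=c$, and $c\ne 0$ by the definition of $\Idm(\mathbb{A})$, so the eigenspace is nontrivial. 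I would remark here that this recovers the standard fact $1\in\sigma(c)$ already noted in the preliminaries.

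Finally, for the transformation rule \ref{ideal5}, the plan is to apply \ref{ideal1} with the idempotent $c_2$. For arbitrary $x\in\mathbb{A}$ we have $L_{c_2}L_{c_1}(x)=c_2(c_1 x)=L_{c_2}(c_1 x)$, and since $c_1=c_1^2$, I would use \ref{ideal1} for $c_2$ in the factored form $L_{c_2}(c_1 x)=L_{c_2}(c_1)\,L_{c_2}(x)=(c_2 c_1)(c_2 x)=L_{c_2 c_1}L_{c_2}(x)$. As this holds for all $x$, the operator identity $L_{c_2}L_{c_1}=L_{c_2 c_1}L_{c_2}$ follows. The only subtlety is the bookkeeping of which idempotent plays the role of the endomorphism-inducing element, and keeping the commutativity of the algebra in mind so that left and right multiplications coincide throughout.
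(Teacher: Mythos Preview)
Your proof is correct and follows essentially the same approach as the paper: for \ref{ideal1} you specialize the medial identity (via its linearization \eqref{medial0a}) at the idempotent $c$, while the paper uses the full identity \eqref{magma} with $x=z=c$, which amounts to the same computation $(cy)(cw)=(cc)(yw)=c(yw)$; parts \ref{ideal2}--\ref{ideal5} are then derived as formal consequences in the same manner. The only cosmetic difference is that in \ref{ideal5} you invoke the already-proved \ref{ideal1} whereas the paper repeats the medial computation directly, and your remark that $c_1=c_1^2$ in that step is harmless but unnecessary since \ref{ideal1} applies to arbitrary factors.
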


\begin{proof}
\ref{ideal1}: the multiplication operator $L_c:\mathbb{A}\to \mathbb{A}$ is linear and it follows from the medial magma identity that for any idempotent $c\in \Idm(\mathbb{A})$
\begin{align*}
L_c(xy)&=c(xy)=(cc)(xy)=(cx)(cy)=L_cx\,L_cy,
\end{align*}
hence $L_c$ is an algebra endomorphism. As the kernel of a homomorphism, $\mathbb{A}_c(0)=\ker L_c$ is ideal and as the image of a homomorphism, $\image L_c=L_c(\mathbb{A})$ is a subalgebra, which proves \ref{ideal2} and \ref{ideal3} respectively. Further, $\mathbb{A}_c(1)=\{x:L_cx=x\}$ is the set of fixed points of $L_c$, thus, is a subalgebra of $\mathbb{A}$. Since $L_c$ stabilizes $\mathbb{A}_c(1)$, the latter is also a subalgebra of $L_c(\mathbb{A})$. Also,
$$
\Span{c}\subset \mathbb{A}_c(1)\subset cA,
$$
hence $\dim \mathbb{A}_c(1)\ge 1$. This proves \ref{ideal4}. Finally, \ref{ideal5} follows from
$$
L_{c_2}L_{c_1}x=c_2(c_1x)=(c_2c_2)(c_1x)=(c_2c_1)(c_2x)=L_{c_2c_1}L_{c_2}x.
$$
\end{proof}

The set of idempotents with non-degenerated multiplication operator
$$
\Idm^{\times}(\mathbb{A}):=\{c\in \Idm(A): \, \ker L_c=0 \}\subset \Idm(\mathbb{A})
$$
plays a distinguished role.

\begin{proposition}
\label{pro:quasi}
$\Idm^{\times}(\mathbb{A})$ is a medial idempotent quasigroup. Furthermore, all $c\in \Idm^{\times}(\mathbb{A})$ have the same characteristic polynomial.
\end{proposition}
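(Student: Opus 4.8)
The plan is to derive both assertions from the single transformation rule \eqref{c1c2}, reinforced by the observation that $L_c$ is an algebra automorphism whenever $c\in\Idm^{\times}(\mathbb{A})$. Indeed, by part~\ref{ideal1} of Proposition~\ref{pro:endom} each $L_c$ is an algebra endomorphism, and if $\ker L_c=0$ then, the algebra being finite-dimensional, $L_c$ is bijective; a bijective endomorphism is an automorphism, so $L_c\in\Aut(\mathbb{A})$ and hence $L_c$ permutes the set $\Idm_0(\mathbb{A})$ of all idempotents. These two facts will drive everything.

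First I would show that $\Idm^{\times}(\mathbb{A})$ is closed under multiplication. For $c_1,c_2\in\Idm^{\times}(\mathbb{A})$ the rule \eqref{c1c2} reads $L_{c_2}L_{c_1}=L_{c_2c_1}L_{c_2}$; the left-hand side is invertible, so $L_{c_2c_1}$ is invertible too, whence $c_2c_1=c_1c_2$ (an idempotent by Proposition~\ref{pro:idem}) is nonzero and lies in $\Idm^{\times}(\mathbb{A})$. The operation is commutative and idempotent and inherits the medial law \eqref{magma0} from $\mathbb{A}$, so it remains to verify unique solvability. Fix $a,b\in\Idm^{\times}(\mathbb{A})$. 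Injectivity of $x\mapsto ax$ is immediate from $\ker L_a=0$. For surjectivity I would use that the automorphism $L_a$ permutes $\Idm_0(\mathbb{A})$: there is a unique $x\in\Idm_0(\mathbb{A})$ with $ax=b$, and $x\neq0$ because $b\neq0$. Feeding this back into \eqref{c1c2} gives $L_aL_x=L_{ax}L_a=L_bL_a$, which is invertible, forcing $L_x$ invertible, i.e. $x\in\Idm^{\times}(\mathbb{A})$. By commutativity the mirror equation $y\circ a=b$ has the same unique solution, so $\Idm^{\times}(\mathbb{A})$ is a commutative idempotent medial quasigroup.

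For the equal characteristic polynomials I would reread \eqref{c1c2} as a conjugation. For $c_1,c_2\in\Idm^{\times}(\mathbb{A})$, invertibility of $L_{c_2}$ yields
\begin{equation*}
L_{c_2c_1}=L_{c_2}L_{c_1}L_{c_2}^{-1},
\end{equation*}
so $L_{c_2c_1}$ and $L_{c_1}$ are similar and therefore $\chi_{c_2c_1}=\chi_{c_1}$. Now take arbitrary $c_1,d\in\Idm^{\times}(\mathbb{A})$; by the quasigroup property just established, the equation $y\,c_1=d$ has a solution $c_2\in\Idm^{\times}(\mathbb{A})$, so $d=c_2c_1$ and hence $\chi_d=\chi_{c_2c_1}=\chi_{c_1}$. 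Since $c_1$ and $d$ were arbitrary, all idempotents in $\Idm^{\times}(\mathbb{A})$ share a single characteristic polynomial.

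The only genuinely delicate point is the surjectivity half of the quasigroup axiom: closure and injectivity are formal, but producing a solution $x$ of $ax=b$ that again lies in $\Idm^{\times}(\mathbb{A})$ requires both that $L_a$ carries idempotents to idempotents (the automorphism property) and the feedback from \eqref{c1c2} certifying that the resulting $x$ is non-degenerate. Once the quasigroup structure is secured, the characteristic-polynomial claim is essentially free, as the similarity $L_{c_2c_1}\sim L_{c_1}$ propagates across the whole quasigroup through unique solvability.
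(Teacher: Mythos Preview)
Your proof is correct and follows essentially the same route as the paper: both derive closure from the transformation rule \eqref{c1c2}, deduce injectivity of $x\mapsto ax$ from $\ker L_a=0$, and obtain the conjugacy $L_{c_2c_1}=L_{c_2}L_{c_1}L_{c_2}^{-1}$ from \eqref{c1c2}.

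Two minor differences are worth noting. First, you explicitly verify the \emph{surjectivity} half of the quasigroup axiom---producing $x\in\Idm^{\times}(\mathbb{A})$ with $ax=b$ via the automorphism $L_a$ and then certifying $L_x$ invertible through \eqref{c1c2}---whereas the paper's proof only checks cancellation and declares the quasigroup property; your treatment is the more careful one here, since $\Idm^{\times}(\mathbb{A})$ need not be finite. Second, for the characteristic-polynomial claim the paper avoids invoking solvability altogether: from $c_1c_2=c_2c_1$ it reads \eqref{lc1c2} both ways to get $L_{c_1}L_{c_2}L_{c_1}^{-1}=L_{c_1c_2}=L_{c_2}L_{c_1}L_{c_2}^{-1}$, so $L_{c_1}$ and $L_{c_2}$ are each conjugate to $L_{c_1c_2}$ and hence to one another. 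Your route---first establishing the quasigroup, then writing an arbitrary $d$ as $c_2c_1$---reaches the same conclusion but depends on the surjectivity you just proved; the paper's shortcut is a touch more economical for this particular step.
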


\begin{proof}
If $c_1,c_2\in \Idm^{\times}(\mathbb{A})$ then $c_1c_2$ is a nonzero idempotent, hence $\Idm^{\times}(\mathbb{A})\Idm^{\times}(\mathbb{A})\subset \Idm(\mathbb{A})$. Furthermore, since $L_{c_1}$ is invertible,  by \ref{ideal5} in Proposition~\ref{pro:idem} we obtain
\begin{equation}\label{lc1c2}
L_{c_1c_2}=L_{c_1}L_{c_2}L_{c_1}^{-1}
\end{equation}
is also invertible, hence in fact $c_1c_2\in \Idm^{\times}(\mathbb{A})$. Therefore, $\Idm^{\times}(\mathbb{A})\Idm^{\times}(\mathbb{A})\subset \Idm^{\times}(\mathbb{A})$ i.e. $\Idm^{\times}(\mathbb{A})$ is a multiplicative magma. Furthermore, if $c_2c_1=c_3c_1$ for some $c_i\in \Idm^{\times}(\mathbb{A})$ then $c_1(c_2-c_3)=L_{c_1}(c_2-c_3)=0$, hence $c_2-c_3=0$. This  yields that $\Idm^{\times}(\mathbb{A})$ is a quasigroup. Since $\Idm^{\times}(\mathbb{A})$ satisfies \eqref{magma}, it is also a medial idempotent quasigroup.  Next, it follows from \eqref{lc1c2} and $c_1c_2=c_2c_1$ that
$$
L_{c_1}L_{c_2}L_{c_1}^{-1}=L_{c_2}L_{c_1}L_{c_2}^{-1},
$$
i.e. $L_{c_1}$ and $L_{c_2}$ are conjugate linear endomorphisms, thus have the same characteristic polynomials. The proposition follows.
\end{proof}

\subsection{Medial and associative algebras}
There are useful connections between medial and associative algebras. In one direction, the relation is simple. Indeed, if  $\mathbb{A}$ is associative and commutative then
$(xy)(zw)=xyzw=xzyw=(xz)(yw)$, hence

\begin{proposition}\label{pro:ex3}
Any associative commutative algebra is medial.
\end{proposition}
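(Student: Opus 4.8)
The plan is to exploit the fact that in a commutative associative algebra the product of any finite list of elements is independent of both the bracketing and the order of the factors, and then simply read off the medial identity \eqref{magma0} as a special case. First I would invoke associativity to strip all parentheses from the left-hand side $(xy)(zw)$, writing it unambiguously as the four-fold product $xyzw$; this step is legitimate precisely because associativity guarantees that every bracketing of a fixed sequence of factors produces the same element, so the symbol $xyzw$ is well defined. Next I would apply commutativity to transpose the two middle factors, giving $xyzw = xzyw$. Finally, regrouping $xzyw$ as $(xz)(yw)$ returns a bracketed expression equal to the right-hand side of \eqref{magma0}, and since $x,y,z,w$ were arbitrary this yields the medial law.

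There is essentially no genuine obstacle here, so the "hard part" is only a matter of bookkeeping: the passage from the bracketed $(xy)(zw)$ to the unparenthesized $xyzw$ rests squarely on associativity, while the interchange of $y$ and $z$ rests squarely on commutativity, so both hypotheses enter exactly once. I would emphasize in the write-up that in a general nonassociative setting the expression $xyzw$ carries no meaning, which is exactly why the medial identity can fail without the associativity assumption; no condition on $\operatorname{char}(\Field)$ is required. Thus the entire argument collapses to the single displayed chain $(xy)(zw)=xyzw=xzyw=(xz)(yw)$, valid for all $x,y,z,w\in\mathbb{A}$, which is precisely the content of \eqref{magma0}.
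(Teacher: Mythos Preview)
Your argument is correct and is exactly the one-line proof the paper gives: $(xy)(zw)=xyzw=xzyw=(xz)(yw)$, using associativity to drop parentheses and commutativity to swap $y$ and $z$. There is nothing to add.
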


In the converse direction, one has the following  observation. If $A$ is a unital medial algebra and $e$ is the algebra unit then
$$
x(yz)=(ex)(yz)=(ez)(xy)=z(xy)=(xy)z.
$$
This implies

\begin{proposition}
\label{pro:unital}
If $\mathbb{A}$ is a unital medial algebra then  $\mathbb{A}$ is  associative.
\end{proposition}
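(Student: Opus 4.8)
The plan is to exploit the unit $e$ to turn an arbitrary triple product into a product of two factors, to which the medial law can be applied directly. The key observation is that, for a commutative algebra, associativity amounts to the single identity $x(yz)=(xy)z$, while the medial identity \eqref{magma0} is precisely a prescription for permuting the four factors of a product of two pairs. Since $e$ acts as the identity, one may freely insert it in the form $xe=x$ or $ez=z$ so as to present $x(yz)$ as a product of two suitably chosen pairs and then let the medial permutation carry the inner grouping to the outer one.

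Concretely, I would write $x(yz)=(xe)(yz)$ and then invoke \eqref{magma0} with the four entries $x,e,y,z$ to get $(xe)(yz)=(xy)(ez)$. Removing the unit on the right via $ez=z$ gives $(xy)(ez)=(xy)z$, and chaining these equalities yields $x(yz)=(xy)z$, which is exactly associativity. This is the same idea as the computation displayed immediately before the statement, which instead first uses commutativity to rewrite $yz=zy$ and then applies the medial law in the form $(ex)(zy)=(ez)(xy)$; both routings lead to the same conclusion.

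The only step requiring a moment of care is the bookkeeping of which factor plays which role in \eqref{magma0}: one must align the insertion of the unit so that the medial permutation actually relocates the pairing from $x(yz)$ to $(xy)z$. Once the substitution $(xe)(yz)\mapsto(xy)(ez)$ is identified there is no genuine obstacle. I note that the argument is a single line valid over \emph{any} field, since it uses \eqref{magma0} directly rather than passing through its square form \eqref{medial0}; in particular the characteristic hypotheses invoked in Proposition~\ref{pro:med} are not needed here.
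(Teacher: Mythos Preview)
Your proof is correct and is essentially the same one-line argument as the paper's: insert the unit to write a triple product as a product of two pairs, apply the medial identity once, and remove the unit. Your choice to insert $e$ on the right of $x$ (so that $(xe)(yz)=(xy)(ez)$ follows directly from \eqref{magma0}) is marginally cleaner than the paper's routing via $(ex)(zy)=(ez)(xy)$, but the idea is identical.
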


\begin{remark}
The above observations show that medial algebras are `nearly associative'.
Of course, a general medial algebra is not associative. The simplest example is the two-dimensional  algebra $\mathbb{A}_2$ from Example~\ref{ex1} above. Indeed, the identity \eqref{cicjck} implies that the basis $\{c_1,c_2\}$ satisfies \eqref{magma}, thus by Proposition~\ref{pro:suff} $\mathbb{A}_2$ is medial. Since
$$(c_1c_2)c_3=c_3c_3=c_3\ne c_1=c_1c_1=c_1(c_2c_3),
$$
$\mathbb{A}_2$ is nonassociative.
\end{remark}

\begin{remark}
In the above context, a natural question arises: Does a unitalization preserve the medial property? The answer is `no'. Indeed, let  $\mathbb{A}$ be any nonassociative  medial algebra. Then its unitalization $\mathbb{A}^{\mathrm{uni}}$ contains $\mathbb{A}$ as a subalgebra, therefore $\mathbb{A}^{\mathrm{uni}}$ is also nonassociative. But by Proposition\eqref{pro:unital}, any unital medial algebra is associative, hence $\mathbb{A}^{\mathrm{uni}}$ in this case is \textit{not} medial.
\end{remark}

On the other hand, if a medial algebra has an idempotent with nondegenerate multiplication (i.e. the set $\Idm^{\times}(\mathbb{A})$ is non-empty) then such an algebra is a (strong) isotope of a  commutative associative unital algebra. More precisely, using Kaplansky's Trick \cite{Peter2000}, \cite{Kaplansky} one obtains

\begin{proposition}
\label{pro:ass}
Let $\mathbb{A}$ be a medial algebra, $c\in \Idm^{\times}(A)$. Define  the strong isotopy
 $(\mathbb{A},\circ)$ with the new multiplication
\begin{equation}\label{circ}
x\circ y=L_c^{-1}(xy).
\end{equation}
Then $(\mathbb{A},\circ)$ is a unital commutative associative algebra.
\end{proposition}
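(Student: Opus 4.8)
The plan is to verify the three defining properties of a commutative associative unital algebra for $(\mathbb{A},\circ)$ in increasing order of difficulty, exploiting the crucial fact that $L_c$ is an algebra \emph{endomorphism} (Proposition~\ref{pro:endom}\ref{ideal1}) together with the invertibility of $L_c$ guaranteed by $c\in\Idm^{\times}(\mathbb{A})$.

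First I would dispose of commutativity and the unit, which are immediate. Commutativity of $\circ$ follows from that of $\mathbb{A}$, since $x\circ y=L_c^{-1}(xy)=L_c^{-1}(yx)=y\circ x$. For the unit I claim that $c$ itself is the identity element: because $c^2=c$ we have $cx=L_cx$, whence $c\circ x=L_c^{-1}(cx)=L_c^{-1}(L_cx)=x$, and by commutativity also $x\circ c=x$. Thus $c$ is a two-sided unit for $\circ$.

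The main work, and the only genuinely nontrivial point, is associativity. Writing out both sides gives
\[
(x\circ y)\circ z=L_c^{-1}\big(L_c^{-1}(xy)\cdot z\big),\qquad x\circ(y\circ z)=L_c^{-1}\big(x\cdot L_c^{-1}(yz)\big),
\]
so, $L_c^{-1}$ being injective, it suffices to prove the equality $L_c^{-1}(xy)\cdot z=x\cdot L_c^{-1}(yz)$. The key idea is to apply the injective operator $L_c$ to both sides and use that it is multiplicative, $L_c(ab)=(L_ca)(L_cb)$: on the left this gives $\big(L_cL_c^{-1}(xy)\big)(L_cz)=(xy)(cz)$, and on the right $(L_cx)\big(L_cL_c^{-1}(yz)\big)=(cx)(yz)$. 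Hence associativity reduces to the single identity $(xy)(cz)=(cx)(yz)$.

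Finally, this last identity is an immediate consequence of the medial law together with commutativity: applying \eqref{magma} to the quadruple $(x,y,c,z)$ yields $(xy)(cz)=(xc)(yz)=(cx)(yz)$, as required. I expect no real obstacle in this reduction; the one point that must be kept in view is that $L_c$ and $L_c^{-1}$ are genuine bijections (precisely the content of $c\in\Idm^{\times}(\mathbb{A})$), since this is exactly what licenses cancelling $L_c^{-1}$ on the outside and then $L_c$ on the inside, so that the reduced identity forces equality of the original expressions.
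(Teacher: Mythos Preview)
Your proof is correct. The verification of commutativity and of $c$ being the unit is identical to the paper's, and your direct reduction of associativity to the medial identity $(xy)(cz)=(cx)(yz)$ via the multiplicativity and injectivity of $L_c$ is clean and complete.

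The paper takes a slightly different, more structural route for associativity. Rather than verifying $(x\circ y)\circ z=x\circ(y\circ z)$ directly, it first observes that since $L_c^{-1}$ is an algebra automorphism one has
\[
(x\circ y)\circ(z\circ w)=L_c^{-1}\bigl(L_c^{-1}(xy)\,L_c^{-1}(zw)\bigr)=L_c^{-2}\bigl((xy)(zw)\bigr),
\]
so $(\mathbb{A},\circ)$ is medial if and only if $\mathbb{A}$ is. Since $(\mathbb{A},\circ)$ is now a \emph{unital} medial algebra, associativity follows from the earlier Proposition~\ref{pro:unital}. Your argument in effect unpacks that two-step reasoning into a single explicit computation: setting $w=c$ in the paper's display and using $z\circ c=z$ recovers exactly your reduced identity. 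The paper's approach has the minor advantage of highlighting that mediality transfers to the isotope, while yours is self-contained and does not rely on Proposition~\ref{pro:unital}.
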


\begin{proof}
The algebra  $(\mathbb{A},\circ)$ is commutative and $c$ is a unit in $(\mathbb{A},\circ)$. Also, by \ref{ideal1}, $L_c^{-1}$ is $\mathbb{A}$-isomorphism and
$$
(x\circ y)\circ (z\circ w)=L_c^{-1}(L_c^{-1}(xy)\,L_c^{-1}(zw))=L_c^{-2}((xy)(zw))
$$
which by the nongeneracy of $L_c$ proves that $(\mathbb{A},\circ)$ is medial if and only if $\mathbb{A}$ is medial. By Proposition~\ref{pro:unital}, $(\mathbb{A},\circ)$ is  associative.
\end{proof}

\begin{remark}
Although relation ~\eqref{circ} establishes the existence of an associative algebra structure on the isotopy $(\mathbb{A},\circ)$, \textit{there is no canonical  way to reconstruct  the original algebra structure from}  $(\mathbb{A},\circ)$. It would be interesting to characterize all commutative algebras $\mathbb{A}$ whose associative  isotopes $(\mathbb{A},\circ)$ are  isomorphic.
\end{remark}

\subsection{The determinant on a medial algebra}
We have already seen in Examples~\ref{ex1} and \ref{ex2} that medial algebras in dimension 2 and 3 carry a multiplicative homomorphism.
As a corollary of Proposition~\ref{pro:ass} is the following result.
\begin{theorem}
\label{the:det}
Let $\mathbb{A}$ be a medial algebra, $c\in \Idm^{\times}(A)$. Then
$$\phi(x)=\frac1{\det L_c}\det L_x
$$
is $\mathbb{A}$-multiplicative, i.e.
$$
\phi(xy)=\phi(x)\phi(y), \qquad \forall x,y\in \mathbb{A}.
$$

\end{theorem}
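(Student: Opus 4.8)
The plan is to reduce the claim to a single operator identity for the left multiplications $L_x$ and then take determinants. Conceptually the statement is transparent in view of Proposition~\ref{pro:ass}: writing $\Lambda_x$ for left multiplication in the associative isotope $(\mathbb{A},\circ)$, one checks from $x\circ z=L_c^{-1}(xz)$ that $\Lambda_x=L_c^{-1}L_x$, so that $\phi(x)=\det\Lambda_x$ is precisely the generic norm (the determinant form) of the commutative \emph{associative} unital algebra $(\mathbb{A},\circ)$ with unit $c$. For an associative commutative algebra the left regular representation gives $\Lambda_{x\circ y}=\Lambda_x\Lambda_y$, hence $\phi$ is automatically multiplicative for the isotopic product $\circ$. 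The only real issue is therefore to transfer this multiplicativity from $\circ$ back to the original product, and this is exactly where the medial law must be used in an essential way.

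First I would record the operator form of the medial identity. Reading \eqref{magma} as an identity linear in the last variable $w$, the relation $(xy)(zw)=(xz)(yw)$ yields
\[
L_{xy}L_z=L_{xz}L_y,\qquad \forall x,y,z\in\mathbb{A}.
\]
Specializing $x=z=c$ and using $c^2=c$ gives $L_{cy}L_c=L_{c^2}L_y=L_cL_y$; since $c\in\Idm^{\times}(\mathbb{A})$ the operator $L_c$ is invertible, so
\[
L_{cx}=L_cL_xL_c^{-1}.
\]
Next, putting $z=c$ in the displayed operator identity and using commutativity ($xc=cx$) I obtain
\[
L_{xy}L_c=L_{xc}L_y=L_{cx}L_y=L_cL_xL_c^{-1}L_y.
\]

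Taking determinants in this last relation and cancelling the nonzero factor coming from the conjugation gives $\det L_{xy}\,\det L_c=\det L_x\,\det L_y$, whence
\[
\phi(xy)=\frac{\det L_{xy}}{\det L_c}=\frac{\det L_x\,\det L_y}{(\det L_c)^2}=\phi(x)\phi(y),
\]
which is the asserted multiplicativity. The point requiring care is precisely the conjugation identity $L_{cx}=L_cL_xL_c^{-1}$: it is what converts the (trivial) multiplicativity of the norm for the associative product $\circ$ into multiplicativity for the genuine nonassociative product, and it is the one place where the full strength of the medial law \eqref{magma} is invoked rather than mere associativity of the isotope. Everything else is the standard fact that $\det$ is multiplicative and conjugation-invariant, together with $\det L_c\neq0$ guaranteed by $c\in\Idm^{\times}(A)$, which also ensures that $\phi$ is well defined.
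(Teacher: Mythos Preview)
Your proof is correct and in fact more direct than the paper's argument. The paper proceeds via the associative isotope $(\mathbb{A},\circ)$ of Proposition~\ref{pro:ass}: with $M_x=L_c^{-1}L_x$ one first uses associativity to get $\det M_{x\circ y}=\det M_x\det M_y$, and then separately derives the conjugation relation $L_{L_c^{-1}z}=L_c^{-1}L_zL_c$ (equivalent to your $L_{cx}=L_cL_xL_c^{-1}$) in order to show $\det M_{x\circ y}=\det M_{xy}$ and hence transfer multiplicativity from $\circ$ back to the original product. You bypass this two-step detour by extracting the single operator identity $L_{xy}L_z=L_{xz}L_y$ directly from \eqref{magma} and specializing to $z=c$ to obtain $L_{xy}L_c=L_cL_xL_c^{-1}L_y$; taking determinants then finishes in one line. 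The key algebraic content---the conjugation identity---is the same in both arguments, but your packaging avoids the explicit appeal to associativity of the isotope and is cleaner.
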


\begin{proof}
Indeed, the algebra $(\mathbb{A},\circ)$ in Proposition~\ref{pro:ass} is associative. Let $M_x:y\to x\circ y$ denote the (left) multiplication operator in $(\mathbb{A},\circ)$. The multiplication operators in $(\mathbb{A},\circ)$ and $\mathbb{A}$ are related by $M_x=L_c^{-1}L_x$. By the associativity we have $M_{x\circ y}=M_xM_y$, hence by the multiplicative property of the determinant
\begin{equation}\label{ML}
\det M_{x\circ y}=\det (M_xM_y)=\det M_x \det M_y.
\end{equation}
Next, by \ref{ideal1} we have the $\mathbb{A}$-homomorphism property:
\begin{equation}\label{homom}
L_c^{-1}(xy)=L_c^{-1}(x)L_c^{-1}(y), \qquad x,y\in \mathbb{A}.
\end{equation}
With the latter relation in hand we obtain
$$
(L_{c}^{-1}z)x=(L_{c}^{-1}z)(L_{c}^{-1}L_cx)=L_{c}^{-1}(zL_cx)=
L_{c}^{-1}L_z L_cx
$$
implying
\begin{equation}\label{homom1}
L_{L_{c}^{-1}z}=L_{c}^{-1}L_z L_c.
\end{equation}
Setting $z=xy$ and applying the determinant we obtain
\begin{equation}\label{homomdet}
\det L_{L_{c}^{-1}(xy)}=\det L_{xy}.
\end{equation}
On the other hand, since $L_{c}^{-1}(xy)=x\circ y$ we get
\begin{equation}\label{homomdet}
\det L_{x\circ y}=\det L_{xy},
\end{equation}
which yields
$$
\det M_{x\circ y}=\det L_c^{-1}L_{x\circ y}=\det L_c^{-1}\det L_{xy}=\det L_c^{-1}L_{xy}=\det M_{xy}.
$$
Combining the latter with \eqref{ML} we finally obtain
$$
\det M_{xy}=\det M_x \det M_y
$$
therefore $\phi(x)=\det M_x=\det L_c^{-1}\det L_x$ is multiplicative in $\mathbb{A}$.
\end{proof}

\subsection{Further properties of medial algebras}
The following two propositions follow by straightforward examination of \eqref{magma}.

\begin{proposition}
\label{pro:product}
Let $\mathbb{A}$ and $\mathbb{B}$ be medial algebras. Then the direct product $\mathbb{A}\times\mathbb{B}$ with
\begin{align*}
(a,x)+(b,y)&=(a+b,x+y),\\
(a,x)\cdot(b,y)&=(ab,xy)
\end{align*}
is a medial algebra. The medial subalgebras $\mathbb{A}\times0$ and $0\times \mathbb{B}$ are ideals of $\mathbb{A}\times\mathbb{B}$.
\end{proposition}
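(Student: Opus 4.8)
The plan is to verify the medial magma identity \eqref{magma0} on the direct product by a direct componentwise computation, exploiting the fact that both addition and multiplication on $\mathbb{A}\times\mathbb{B}$ are defined coordinatewise. Concretely, I would take four arbitrary elements $(a_i,x_i)$, $1\le i\le 4$, of $\mathbb{A}\times\mathbb{B}$, compute both sides of \eqref{magma0}, and observe that the first coordinate reduces to an instance of the medial identity in $\mathbb{A}$ and the second coordinate to an instance of the medial identity in $\mathbb{B}$.

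More precisely, writing out
\[
\bigl((a_1,x_1)(a_2,x_2)\bigr)\bigl((a_3,x_3)(a_4,x_4)\bigr)=\bigl((a_1a_2)(a_3a_4),\,(x_1x_2)(x_3x_4)\bigr),
\]
and the right-hand side analogously with the roles of the second and third factors interchanged, the desired equality becomes the pair of identities $(a_1a_2)(a_3a_4)=(a_1a_3)(a_2a_4)$ and $(x_1x_2)(x_3x_4)=(x_1x_3)(x_2x_4)$. These hold precisely because $\mathbb{A}$ and $\mathbb{B}$ are medial, so both coordinates of the two sides agree and $\mathbb{A}\times\mathbb{B}$ satisfies \eqref{magma0}. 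Equivalently, one may apply Proposition~\ref{pro:suff} to the basis of $\mathbb{A}\times\mathbb{B}$ obtained as the union of a basis of $\mathbb{A}\times0$ and a basis of $0\times\mathbb{B}$, which by construction satisfies \eqref{magma}.

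For the ideal assertion, I would note that $\mathbb{A}\times0$ is visibly a linear subspace, and that for any $(a,0)\in\mathbb{A}\times0$ and $(b,y)\in\mathbb{A}\times\mathbb{B}$ one has $(a,0)(b,y)=(ab,0)\in\mathbb{A}\times0$; hence $\mathbb{A}\times0$ absorbs multiplication and is therefore an ideal. The argument for $0\times\mathbb{B}$ is identical. Since subalgebras of medial algebras are again medial by Proposition~\ref{pro:suff}, both $\mathbb{A}\times0$ and $0\times\mathbb{B}$ are medial subalgebras.

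I do not anticipate any genuine obstacle here: the statement is a formal consequence of the coordinatewise definition of the operations, and the multiplicativity of the identity means it passes to the product factor by factor. The only point requiring mild care is the bookkeeping of the order of the four factors, so that the permutation matching the medial law is applied correctly and consistently in each of the two coordinates.
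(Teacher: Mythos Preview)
Your proposal is correct and matches the paper's approach: the paper simply states that the proposition ``follows by straightforward examination of \eqref{magma}'', and your componentwise verification is exactly that straightforward examination spelled out in detail. There is nothing to add.
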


\begin{proposition}
\label{pro:ideal}
Let $\mathbb{B}\subset \mathbb{A}$ be an ideal of a medial algebra. Then the factor algebra $\mathbb{A}/\mathbb{B}$ is medial too.
\end{proposition}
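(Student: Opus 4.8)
The plan is to realize $\mathbb{A}/\mathbb{B}$ as a homomorphic image of $\mathbb{A}$ and to observe that the medial identity \eqref{magma0}, being a multilinear polynomial identity, is automatically inherited by homomorphic images. This is the cleanest route, and it parallels the elementary argument already used for Proposition~\ref{pro:suff}.

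First I would recall that, since $\mathbb{B}$ is an ideal of the commutative algebra $\mathbb{A}$ (so $\mathbb{A}\mathbb{B}\subset\mathbb{B}$), the quotient $\mathbb{A}/\mathbb{B}$ carries a well-defined commutative algebra structure with product $(x+\mathbb{B})(y+\mathbb{B})=xy+\mathbb{B}$. The well-definedness of this product is exactly the content of the ideal hypothesis, and, crucially, no associativity is required to form the quotient. The canonical projection $\pi:\mathbb{A}\to\mathbb{A}/\mathbb{B}$, $\pi(x)=x+\mathbb{B}$, is then a surjective algebra homomorphism.

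Next I would simply transport \eqref{magma0} through $\pi$. Writing $\bar u=\pi(u)$ for brevity, I take arbitrary elements $\bar x,\bar y,\bar z,\bar w\in\mathbb{A}/\mathbb{B}$, choose representatives $x,y,z,w\in\mathbb{A}$, and compute
\[
(\bar x\bar y)(\bar z\bar w)=\pi\bigl((xy)(zw)\bigr)=\pi\bigl((xz)(yw)\bigr)=(\bar x\bar z)(\bar y\bar w),
\]
where the outer equalities use that $\pi$ is an algebra homomorphism and the middle equality is the medial identity \eqref{magma0} applied in $\mathbb{A}$. Since $\pi$ is surjective, every element of $\mathbb{A}/\mathbb{B}$ is of the form $\bar x$ for some $x\in\mathbb{A}$, so the displayed identity holds for all quadruples in $\mathbb{A}/\mathbb{B}$; hence $\mathbb{A}/\mathbb{B}$ is medial.

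There is essentially no genuine obstacle here, as the argument is purely formal and the medial law is a single multilinear identity. The only point demanding any care is the well-definedness of the quotient multiplication, which is guaranteed by the ideal assumption; everything else is a mechanical application of the homomorphism property of $\pi$.
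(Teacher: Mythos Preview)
Your proof is correct and is precisely the ``straightforward examination of \eqref{magma}'' that the paper invokes in lieu of a written proof: transport the multilinear identity through the surjective quotient homomorphism. There is nothing to add.
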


There is another useful  construction of medial algebras from a given one.

\begin{proposition}[Twisted doubling]\label{pro:epsilon}
Let $\mathbb{A}$  be a medial algebra and let $\zeta\in \{-1,1\}$. Define the new algebra $(\mathbb{A}\times \mathbb{A})_\zeta$ with multiplication
$$
(x,y)\circ (z,w)=(xz+yw, \zeta(xw+yz)).
$$
Then $(\mathbb{A}\times \mathbb{A})_\zeta$ is medial and $\mathbb{A}\cong \mathbb{A}\times0$ is a medial subalgebra of $(\mathbb{A}\times \mathbb{A})_\zeta$.
\end{proposition}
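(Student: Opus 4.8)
The plan is to reduce the statement to a single quadratic identity and then verify it, ideally by recognizing the construction as a tensor product. First, the subalgebra claim is immediate: since $(x,0)\circ(z,0)=(xz,0)$, the map $x\mapsto(x,0)$ is an injective algebra homomorphism, so $\mathbb{A}\times 0$ is a subalgebra isomorphic to $\mathbb{A}$ and hence medial. Note also that $\circ$ is commutative because $\mathbb{A}$ is, so $(\mathbb{A}\times\mathbb{A})_\zeta$ is a commutative algebra and the machinery of this section (in particular Propositions~\ref{pro:med} and \ref{pro:suff}) applies.

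For the mediality of the whole algebra, since $\mathrm{char}(\Field)\neq 2,3$, Proposition~\ref{pro:med} lets us replace \eqref{magma} by the single identity \eqref{medial0}, i.e. it suffices to check
$$
(P\circ Q)\circ(P\circ Q)=(P\circ P)\circ(Q\circ Q),\qquad P=(x,y),\ Q=(z,w).
$$
Writing $P\circ Q=(xz+yw,\ \zeta(xw+yz))$ and squaring, both coordinates become quadratic expressions in products such as $x^2z^2$, $y^2w^2$, $x^2w^2$, $y^2z^2$, $(xz)(yw)$, $(xz)(xw)$; these are then simplified by commutativity together with the linearized medial identities \eqref{medial0a} and the full law \eqref{magma} (for instance $(xz)(yw)=(xy)(zw)$ and $(xz)(xw)=x^2(zw)$). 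Carrying this out, one finds that each coordinate of the left-hand side matches the corresponding coordinate of the right-hand side, the key cancellations using $\zeta^2=1$. This route is self-contained but bookkeeping-heavy, and the tracking of cross-terms is where care is most needed.

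A more conceptual route, which I would prefer to present, recognizes $(\mathbb{A}\times\mathbb{A})_\zeta$ as a tensor product. Let $R_\zeta=\Field f_1\oplus\Field f_2$ be the two-dimensional commutative algebra with $f_1^2=f_1$, $f_1f_2=\zeta f_2$, $f_2^2=f_1$. A short computation (again reducing to \eqref{medial0} via Proposition~\ref{pro:med}, but now only in two dimensions) shows $R_\zeta$ is medial; observe that for $\zeta=-1$ it is genuinely nonassociative, which is exactly why $R_\zeta$, and not an associative ring, is the correct doubling gadget. The linear map $(x,y)\mapsto x\otimes f_1+y\otimes f_2$ then identifies $(\mathbb{A}\times\mathbb{A})_\zeta$ with $\mathbb{A}\otimes_\Field R_\zeta$ on the nose, since $x\otimes f_1+y\otimes f_2$ times $z\otimes f_1+w\otimes f_2$ equals $(xz+yw)\otimes f_1+\zeta(xw+yz)\otimes f_2$. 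It remains to record the lemma that the tensor product of two medial algebras is medial: by Proposition~\ref{pro:suff} it suffices to test \eqref{magma} on a basis of simple tensors $e_i\otimes g_j$, where $\bigl((e_i\otimes g_j)(e_k\otimes g_l)\bigr)\bigl((e_p\otimes g_q)(e_r\otimes g_s)\bigr)=(e_ie_k)(e_pe_r)\otimes(g_jg_l)(g_qg_s)$, so that the medial laws of the two factors act independently in the two tensor slots.

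The main obstacle either way is organizational rather than deep: in the direct approach it is marshalling the numerous cross-terms and applying the right linearized identity to each; in the tensor approach it is orienting the isomorphism so the twist $\zeta$ lands in the correct coordinate and confirming that $R_\zeta$ (notably $R_{-1}$) is itself medial. I expect the tensor-product formulation to be the cleaner and less error-prone argument, so I would make it the primary proof and relegate the direct verification to a remark.
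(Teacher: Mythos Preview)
Your direct approach via Proposition~\ref{pro:med} is exactly what the paper does: it sets $u=(x,y)$, $v=(z,w)$, computes $(u\circ u)\circ(v\circ v)$ and $(u\circ v)\circ(u\circ v)$ coordinatewise, and observes that equality of the two sides reduces to instances of \eqref{magma} in $\mathbb{A}$. The computation is in fact quite short (four lines), so the bookkeeping burden you anticipate is lighter than you suggest.

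Your tensor-product route is a genuinely different and more conceptual argument, and it is correct. Identifying $(\mathbb{A}\times\mathbb{A})_\zeta\cong\mathbb{A}\otimes_\Field R_\zeta$ and invoking the lemma that a tensor product of medial algebras is medial has the advantage of separating the verification into two independent, easy pieces: mediality of the two-dimensional algebra $R_\zeta$ (a scalar computation), and the basis check for tensor products via Proposition~\ref{pro:suff}. This approach also makes clear why the construction generalizes, as indeed the paper immediately does in Proposition~\ref{pro:twistp} with the algebras $\mathbb{A}^{\times d}_\zeta$; there the paper again argues directly, but your tensor viewpoint would uniformly handle that case as $\mathbb{A}\otimes_\Field(\Field[z]/(z^d-1))_\zeta$. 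The direct computation buys brevity for this one proposition; the tensor formulation buys a reusable lemma and a structural explanation.
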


\begin{proof}
Setting $u=(x,y)$ and $v=(z,w)$, we obtain $u\bullet u=(x^2+y^2,2\zeta xy)$, hence
\begin{align*}
(u\bullet u)\bullet (v\bullet v)&=((x^2+y^2)(z^2+w^2)+4(xy)(zw),\, 2(x^2+y^2)zw+2(z^2+w^2)xy)\\
(u\bullet v)\bullet (u\bullet v)&=((xz+yw)^2+(xw+yz)^2,\, 2(xz+yw)(xw+yz)).
\end{align*}
By \eqref{magma}, the  right hand sides are equal, which implies \eqref{medial0}, therefore $(\mathbb{A}\times \mathbb{A})_\zeta$ is medial. Since $(x,0)\circ (z,0)=(xz, 0)$, $\mathbb{A}\times0$ is a medial subalgebra of $(\mathbb{A}\times \mathbb{A})_\zeta$.
\end{proof}

The twisted doubling is a particular case of the following general construction. Suppose that $\zeta$ is a root of one (not necessarily primitive) in $\Field$ of degree $d$, i.e. $\zeta^d=1$.
Let $\mathbb{A}[z]$ be the (nonassociative commutative) polynomial ring over a medial algebra $\mathbb{A}$. Obviously $\mathbb{A}[z]$  satisfies \eqref{magma}. Next, define on $\mathbb{A}[z]$  a new multiplication by
$$
p(z)\bullet q(z)\equiv p(\zeta z)q(\zeta z)\mod z^d-1.
$$
Thus obtained algebra  $\mathbb{A}^{\times d}_\zeta$  is a commutative nonassociative algebra of dimension
$$\dim\mathbb{A}^{\times d}_\zeta= d\dim \mathbb{A}.
$$
A less obvious property is the following

\begin{proposition}
\label{pro:twistp}
If $\mathbb{A}$ is medial so is $\mathbb{A}^{\times d}_\zeta$. The algebra $\mathbb{A}^{\times d}_1$ is unital with unit $e=1$.
\end{proposition}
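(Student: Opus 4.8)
The plan is to present the twisted product $\bullet$ as an \emph{automorphic twist} of the ordinary product of $\mathbb{A}[z]/(z^{d}-1)$, which renders the medial law transparent. Write $\sigma\colon p(z)\mapsto p(\zeta z)$ for the substitution map, and let $\cdot$ denote the ordinary (reduced) multiplication of the finite-dimensional algebra $R:=(\mathbb{A}[z]/(z^{d}-1),\cdot)$. Since $\zeta^{d}=1$ one has $\sigma(z^{d}-1)=\zeta^{d}z^{d}-1=z^{d}-1$, so $\sigma$ preserves the ideal $(z^{d}-1)$ and descends to a linear bijection of $R$ with inverse $p\mapsto p(\zeta^{-1}z)$; as substitution respects ordinary multiplication we get $\sigma(p\cdot q)=\sigma(p)\cdot\sigma(q)$, i.e. $\sigma\in\Aut(R,\cdot)$. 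By the very definition of the construction,
\[
p\bullet q=p(\zeta z)\,q(\zeta z)=\sigma(p)\cdot\sigma(q)=\sigma(p\cdot q),
\]
and $\bullet$ is commutative because $\cdot$ is.

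First I would record that $R$ is medial. The polynomial algebra $\mathbb{A}[z]$ satisfies the medial law (on the monomial basis this is just $(ab)(cg)=(ac)(bg)$ in $\mathbb{A}$ together with commutativity of exponent addition), and $(z^{d}-1)$ generates an ideal since $z$ is a central, associative indeterminate; hence Proposition~\ref{pro:ideal} gives that the quotient $R$ is medial. Now the twist is harmless: applying the homomorphism property of $\sigma$ twice,
\[
(p\bullet q)\bullet(r\bullet s)=\sigma\bigl(\sigma(p\cdot q)\cdot\sigma(r\cdot s)\bigr)=\sigma^{2}\bigl((p\cdot q)\cdot(r\cdot s)\bigr),
\]
and likewise $(p\bullet r)\bullet(q\bullet s)=\sigma^{2}\bigl((p\cdot r)\cdot(q\cdot s)\bigr)$. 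Mediality of $R$ equates the two inner expressions, and applying the bijection $\sigma^{2}$ yields $(p\bullet q)\bullet(r\bullet s)=(p\bullet r)\bullet(q\bullet s)$. Thus $\mathbb{A}^{\times d}_{\zeta}$ is medial.

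For the second assertion, set $\zeta=1$: then $\sigma=\mathrm{id}$ and $\bullet$ is exactly the ordinary product of $\mathbb{A}[z]/(z^{d}-1)$. Writing $1$ for the unit of $\mathbb{A}$ placed in degree zero (so that the claim presumes $\mathbb{A}$ unital, as in the application $\mathbb{A}=\Field$ yielding $\mathscr{C}_{n}$), we get $1\bullet p=1\cdot p=p$ for every $p$, the degree-zero factor triggering no reduction modulo $z^{d}-1$; hence $e=1$ is a two-sided unit and $\mathbb{A}^{\times d}_{1}$ is unital.

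The one genuine point to verify carefully is the well-definedness on the quotient, namely that $\sigma$ descends and that $(z^{d}-1)$ is an ideal of the nonassociative ring $\mathbb{A}[z]$; both rest on $z$ being central and associative, and the first is exactly where $\zeta^{d}=1$ is used. Once $\sigma\in\Aut(R,\cdot)$ is in hand the mediality of $\bullet$ is automatic, because a symmetric multilinear identity is insensitive to pre- and post-composition by an algebra automorphism. If one prefers to avoid Proposition~\ref{pro:ideal}, the medial law can instead be checked directly on the monomial basis $\{b_{\mu}z^{i}\}$ of $\mathbb{A}^{\times d}_{\zeta}$, on which $(b_{\mu}z^{i})\bullet(b_{\nu}z^{j})=\zeta^{\,i+j}(b_{\mu}b_{\nu})z^{(i+j)\bmod d}$: for four such monomials both sides of the law reduce to the same element, with matching $\zeta$-powers (using $\zeta^{d}=1$) and with the sole nontrivial identification being the mediality of $\mathbb{A}$ applied to the $\mathbb{A}$-coefficients, after which Proposition~\ref{pro:suff} concludes.
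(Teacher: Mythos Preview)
Your proof is correct and follows essentially the same route as the paper's: both compute $(p\bullet q)\bullet(r\bullet s)$ as the ordinary product $(p(\zeta^2 z)q(\zeta^2 z))(r(\zeta^2 z)s(\zeta^2 z))$ reduced modulo $z^d-1$, then appeal to the mediality of $\mathbb{A}[z]/(z^d-1)$ and conclude by the symmetry in $q,r$. Your packaging via the automorphism $\sigma$ is more explicit (the paper hides this in the change of variable $w=\zeta^2 z$), and you are also more careful than the paper in flagging that the unitality claim for $\mathbb{A}^{\times d}_1$ tacitly requires $\mathbb{A}$ itself to have a unit so that ``$1$'' makes sense in $\mathbb{A}[z]$.
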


\begin{proof}
Setting $w=\zeta^2 z$ we obtain
\begin{align*}
(p(z)\bullet q(z))\bullet (r(z)\bullet s(z))&\equiv (p(\zeta^2 z)q(\zeta^2 z))(r(\zeta^2 z)s(\zeta^2 z))\mod z^d-1\\
&\equiv (p(w)q(w))(r(w)s(w))\mod w^d-1\\
&\equiv (p(w)r(w))(q(w)s(w))\mod w^d-1\\
&\equiv (p(\zeta^2 z)r(\zeta^2 z))(q(\zeta^2 z)s(\zeta^2 z))\mod z^d-1\\
&=(p(z)\bullet r(z))\bullet (q(z)\bullet s(z)).
\end{align*}
as desired. Also, if $\zeta=1$ then $p(z)\bullet 1=p(\zeta z)=p(z)$, i.e. $1$ is the unit in $\mathbb{A}^{\times d}_1$.
\end{proof}

Comparing the above definitions reveals also that
$$
(\mathbb{A}\times \mathbb{A})_\zeta\cong \mathbb{A}^{\times 2}_\zeta.
$$
We shall see that the simplest case when $\mathbb{A}=\Field$ plays an essential role in the characterization of medial isospectral generic algebras given below.

\subsection{Some further examples of medial algebras}
Medial algebras appear in several different contexts.


\begin{example}\label{ex4}
First note that there is  a nice relation between the squared magma identity \eqref{medial0} and \textit{genetic algebras}. Recall that a commutative algebra is called \textit{baric} if there exists a {nontrivial} algebra homomorphism  $\omega:\mathbb{A}\to \Field$, also called the weight function, which carries a certain probabilistic information. A baric algebra is called a \textit{Bernstein algebra} of order zero if it satisfies the  identity
\begin{equation}\label{Bern0}
x^2=\omega(x)x.
\end{equation}
The latter equation models a population with
random mating \cite{Glivenko36}, \cite{Bernstein42}, \cite{Etherington}.  Note that a  general commutative baric algebra $\mathbb{A}$ can be made up into a Bernstein algebra of order zero by introducing the multiplication
$$
x\circ y=\frac12(\omega(x)y+\omega(y)x).
$$
Indeed, since $\omega(x\circ y)=\omega(x)\omega(y)$, the algebra $(\mathbb{A},\omega)$ is baric and $x\circ x=\omega(x)x$, as desired.
Any Bernstein order zero algebra satisfies
$$
(xy)^2=\omega(xy)xy=(\omega(x)x)(\omega(y)y)=x^2y^2,
$$
which implies

\begin{corollary}
Any Bernstein  algebra $\mathbb{A}$ of order zero is medial.
\end{corollary}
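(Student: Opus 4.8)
The plan is to deduce the full medial law from the squared-magma identity \eqref{medial0}, since Proposition~\ref{pro:med} already tells us that over a field with $\mathrm{char}(\Field)\neq 2,3$ the identities \eqref{medial0} and \eqref{magma0} are equivalent. Thus the entire task reduces to verifying $(xy)^2=x^2y^2$ for arbitrary $x,y\in\mathbb{A}$.

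To do this I would use the two defining features of a Bernstein algebra of order zero: that the weight $\omega$ is an algebra homomorphism, so that $\omega(xy)=\omega(x)\omega(y)$, and that the identity \eqref{Bern0} holds for every element, including the product $xy$. Applying \eqref{Bern0} to $xy$ gives $(xy)^2=\omega(xy)\,xy$; applying it to $x$ and $y$ separately and multiplying gives $x^2y^2=(\omega(x)x)(\omega(y)y)=\omega(x)\omega(y)\,xy$. The multiplicativity of $\omega$ identifies the scalar factors $\omega(xy)$ and $\omega(x)\omega(y)$, so the two expressions coincide and $(xy)^2=x^2y^2$, which is exactly \eqref{medial0}.

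With \eqref{medial0} in hand, one final invocation of Proposition~\ref{pro:med} upgrades it to the medial identity \eqref{magma0}, so $\mathbb{A}$ is medial. There is no genuine obstacle in this argument: its whole content is the one-line scalar computation above, and the only delicate point---passing from a two-variable identity to the four-variable medial law---is not re-proved here but simply quoted from Proposition~\ref{pro:med}, where the hypothesis $\mathrm{char}(\Field)\neq 2,3$ is consumed.
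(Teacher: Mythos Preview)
Your proof is correct and follows exactly the same route as the paper: verify $(xy)^2=\omega(xy)\,xy=(\omega(x)x)(\omega(y)y)=x^2y^2$ using the multiplicativity of $\omega$, then invoke Proposition~\ref{pro:med} to pass from \eqref{medial0} to the full medial identity.
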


On the other hand, the class of genetic algebras is too thin to cover all medial algebras. The algebra $\mathbb{A}_2$ from Example~\ref{ex1} is medial but not Bernstein. Indeed, if $\omega$ is a weight function on $\mathbb{A}_2$ then  we obtain from \eqref{Bern0} that $c_i^2=\omega(c_i)c_i$, therefore $\omega(c_i)=1$ for $i=1,2,3$. Since $c_i$ span $\mathbb{A}_2$, we deduce that $\omega\equiv 1$ on $\mathbb{A}_2$, i.e. $\omega$ is a trivial homomorphism, a contradiction.
In fact, as we know, $\mathbb{A}_2$ satisfies a 3rd order identity \eqref{B2}.
\end{example}



\begin{example}\label{ex5}
Note that $\Field$ is a one-dimensional medial (associative) algebra over $\Field$. Following Proposition~\ref{pro:epsilon}, let us consider an algebra  $(\Field\times \Field)_\epsilon$. When $\epsilon=1$, the element $(1,0)$ is the algebra unit, hence by Proposition~\ref{pro:unital}, $(\Field\times \Field)_1$ is associative. Let us consider $\epsilon=-1$. Then  the idempotent equation in $(\Field\times \Field)_{-1}$ for $u=(x,y)$ is equivalent to the system
$$
\left\{
\begin{array}{cc}
x^2+y^2&=x\\
2xy +y^2&=0
\end{array}
\right.
$$
If $y=0$ then $c_0=(0,0)$ and $c_1=(1,0)$. If $y\ne0$ then $x=-\frac12$ and $y^2=-\frac34$. We have two cases:

\begin{itemize}
\item[(i)] If $t^2=-3$ is non-solvable in $\Field$ then $c=(1,0)$ is the only nonzero idempotent in $(\Field\times \Field)_{-1}$.
\item[(ii)]
If $t^2=-3$ is solvable in $\Field$ then one has exactly three nonzero idempotents $\{(1,0),(-\frac12,- \frac12t),(-\frac12, \frac12t)\}$, i.e. $(\Field\times \Field)_{-1}$ is generic.
\end{itemize}

Let us illustrate the above cases by the finite fileds $\Field =\mathbb{F}_5$ and $\Field =\mathbb{F}_7$. In the first case, $t^2\equiv -3\mod 5$ is non-solvable in $\mathbb{F}_5$, hence we have (i). If $\Field =\mathbb{F}_7$ then $t^2\equiv -3\equiv 4\mod 7$ is solvable in $\mathbb{F}_5$, $t=\pm2$. Since $-1/2\equiv 3 \mod 7$, the corresponding idempotents are $c_2=(3,1)$ and $c_3=(3,6)$. Furthermore,
\begin{align*}
c_1c_2&=(1,0)\circ (3,1)=(3,6)=c_3\\
c_2c_3&=(3,1)\circ (3,6)=(1,0)=c_1\\
c_3c_1&=(3,6)\circ (1,0)=(3,1)=c_2,
\end{align*}
and $c_1+c_2+c_3=0$ imply \eqref{I2} and \eqref{cicjck}. This shows that
$$
\mathbb{A}_2(\mathbb{F}_7)\cong (\mathbb{F}_7\times \mathbb{F}_7)_{-1}.
$$
Note, however, that  $\mathbb{A}_2(\mathbb{F}_5)\not\cong (\mathbb{F}_5\times \mathbb{F}_5)_{-1}$ for the reasons indicated above.

In general, it is well known that $-3$ is a quadratic residue modulo $p$ if and only if $p=3k+1$. Therefore, if $\Field=\mathbb{F}_p$ then $\mathbb{A}_2(\mathbb{F}_p)\cong (\mathbb{F}_p\times \mathbb{F}_p)_{-1}$ if and only if $p=3k+1$. Note, however, that if $p\ne 3k+1$ then  $\mathbb{A}_2(\mathbb{F}_p)$ is a nontrivial isospectral algebra, but it is not isomorphic to $(\mathbb{F}_p\times \mathbb{F}_p)_{-1}$.

Finally, note that
$$
\mathbb{A}_2(\mathbb{C})\cong (\mathbb{C}\times \mathbb{C})_{-1}.
$$
\end{example}

\section{Idempotents in medial algebras}

\subsection{The zero Peirce eigenspace}
The following example shows that there exist medial algebras with non-empty set $\Idm(\mathbb{A})\setminus \Idm^\times(\mathbb{A})$.

\begin{example}
Let $\mathbb{A}$ be an arbitrary medial algebra over $\Field$. Then the direct product  $\mathbb{A}\times \Field$ is a medial algebra by Proposition~\ref{pro:product}. Clearly, $c_0=(0,1)$ is an idempotent in $\mathbb{A}\times \Field$.
We also know by Proposition~\ref{pro:product} that $\mathbb{A}\times 0$ and $0\times \Field$ are ideals. Then $\mathbb{A}\times 0=\ker L_{c_0}$.
\qed
\end{example}

Let $\mathbb{A}$ be a medial algebra and let $f:\mathbb{A}\to \mathbb{A}$ be an algebra endomorphism. Then $\ker f$ is a (medial algebra) ideal in $\mathbb{A}$ and the factor $\mathbb{A}/\ker f$  obviously satisfies the medial magma identity \eqref{magma}. We have the direct product of medial algebras
$$
\mathbb{A}\cong\ker f\times (\mathbb{A}/\ker f).
$$

%

Now, if $c$ is an idempotent in a medial algebra $A$ then by \ref{ideal1} in Proposition~\ref{pro:endom}, $L_c$ is an algebra homomorphism. Hence the zero Peirce eigenspace
$$
\mathbb{A}_c(0)=\ker L_c,
$$
is a (left=right) ideal of $\mathbb{A}$. Therefore,
$$
\mathbb{A}\cong \mathbb{A}_c(0)\oplus \mathbb{A}/\mathbb{A}_c(0),
$$
where $\mathbb{A}/\mathbb{A}_c(0)$ is medial (a trivial verification). Moreover, the class $c'=c+\mathbb{A}_c(0)$ is obviously an idempotent in $\mathbb{A}/\mathbb{A}_c(0)$. We have
\begin{align*}
\mathbb{A}_{c'}(0)&=\{x+\mathbb{A}_c(0): (c+\mathbb{A}_c(0))(x+\mathbb{A}_c(0))=\mathbb{A}_c(0)\}\\
&=\{x+\mathbb{A}_c(0): cx\in \mathbb{A}_c(0)\}\\
&=\{x+\mathbb{A}_c(0): L_c^2x=0\}\\
&\equiv \ker L_c^2\mod \mathbb{A}_c(0)
\end{align*}

\begin{proposition}\label{pro:Ac0}
The Peirce subspace $\mathbb{A}_c(0)$ is an ideal in $\mathbb{A}$ and the factor $\mathbb{A}'=\mathbb{A}/\mathbb{A}_c(0)$ is a medial algebra. If additionally $c$ is semisimple then $\mathbb{A}'_{c'}(0)=0$, where $c'$ is the class of $c$ in $\mathbb{A}'$.
\end{proposition}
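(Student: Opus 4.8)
The plan is to assemble the statement from three pieces, two of which are essentially recorded in the discussion preceding the proposition. First, I would invoke part~\ref{ideal2} of Proposition~\ref{pro:endom}: since $L_c$ is an algebra endomorphism by part~\ref{ideal1}, its kernel $\mathbb{A}_c(0)=\ker L_c$ is an ideal of $\mathbb{A}$. Second, the medial property of the quotient $\mathbb{A}'=\mathbb{A}/\mathbb{A}_c(0)$ follows at once from Proposition~\ref{pro:ideal}, which says the factor of a medial algebra by an ideal is again medial. So both of the unconditional assertions are immediate.

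The only genuinely new content is the vanishing $\mathbb{A}'_{c'}(0)=0$ under the semisimplicity hypothesis. Here I would rely on the coset computation carried out just before the proposition, which identifies the zero Peirce eigenspace of $c'$ in the quotient as
$$
\mathbb{A}'_{c'}(0)\equiv \ker L_c^2 \bmod \mathbb{A}_c(0).
$$
Thus the claim reduces to the purely linear-algebraic equality $\ker L_c^2=\ker L_c$, after which $\mathbb{A}'_{c'}(0)=\ker L_c^2/\mathbb{A}_c(0)=0$.

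To establish this equality I would use the definition of semisimplicity directly: $c$ semisimple means $\mathbb{A}$ splits as $\bigoplus_{\lambda\in\sigma(c)}\mathbb{A}_c(\lambda)$, and on each summand $L_c$ acts as the scalar $\lambda$, so $L_c^2$ acts as $\lambda^2$. Because $\Field$ is a field, $\lambda^2=0$ forces $\lambda=0$; hence $L_c^2$ is injective on every summand with $\lambda\ne 0$ and annihilates exactly $\mathbb{A}_c(0)$. Therefore $\ker L_c^2=\mathbb{A}_c(0)=\ker L_c$, which is what feeds back into the displayed identification.

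The step I expect to require the most care is making the identification $\mathbb{A}'_{c'}(0)\equiv\ker L_c^2$ fully rigorous rather than merely formal, i.e. verifying that a coset $x+\mathbb{A}_c(0)$ is fixed to $0$ by multiplication with $c'$ precisely when $cx\in\mathbb{A}_c(0)$, equivalently $L_c^2x=0$. This is exactly where semisimplicity is indispensable: for a non-semisimple $c$ there would be a nontrivial Jordan block at the eigenvalue $0$, giving the strict inclusion $\ker L_c\subsetneq\ker L_c^2$, so the residual $2$-nilpotent part of $L_c$ would survive in the quotient and $\mathbb{A}'_{c'}(0)$ would fail to vanish. Everything else is routine verification.
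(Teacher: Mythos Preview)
Your proposal is correct and follows essentially the same route as the paper: the identification $\mathbb{A}'_{c'}(0)\equiv\ker L_c^2\bmod\mathbb{A}_c(0)$ followed by the semisimplicity argument that $\ker L_c^2=\ker L_c$ is exactly what the paper does. The only difference is cosmetic: you cite Propositions~\ref{pro:endom}\ref{ideal2} and~\ref{pro:ideal} for the first two assertions, whereas the paper re-derives both directly (the ideal property via $c(xy)=(cx)(cy)=0$, and mediality of the quotient by an explicit coset computation).
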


\begin{proof}
If  $x\in\mathbb{A}$ and $y\in \mathbb{A}_c(0)$ then applying \eqref{medial0a} yields $$
c(xy)=(cc)(xy)=(cx)(cy)=0,
$$
thus, $xy\in \mathbb{A}_c(0)$. This implies that  $A\,\mathbb{A}_c(0)\subset \mathbb{A}_c(0)$, i.e. $\mathbb{A}_c(0)$ is an ideal of $\mathbb{A}$. Next, if $u_i=x_i+y_i$ with $y_i\in \mathbb{A}_c(0)$, then
$$
u_i^2=x_i^2+z_i, \qquad \text{where }z_i=2x_iy_i+y_i^2\in \mathbb{A}_c(0),
$$
which readily yields
$$
u_1^2u_2^2=x_1^2x_2^2+z_1x_2^2+z_2x_1^2+z_1z_2=x_1^2x_2^2+z_3,
$$
where $z_3\in \mathbb{A}_c(0)$. Similarly,
$$
(u_1u_2)^2=(x_1x_2+x_1y_2+y_1x_2+y_1y_2)^2=(x_1x_2+z_4)^2=(x_1x_2)^2+z_5,
$$
where $z_4,z_5\in \mathbb{A}_c(0)$. This shows that $u_1^2u_2^2=(u_1u_2)^2$ in $\mathbb{A}'=\mathbb{A}/\mathbb{A}_c(0)$, hence by Proposition~\ref{pro:med} the  factor algebra $\mathbb{A}'$  is also medial.

Further, since $L_c$ annihilate $\mathbb{A}_c(0)$,
$$
(c+\mathbb{A}_c(0))^2=c^2+\mathbb{A}_c(0)^2=c+\mathbb{A}_c(0),
$$
therefore the class $c'=c+\mathbb{A}_c(0)$ is an idempotent in $\mathbb{A}'$. Since $\mathbb{A}_c(0)$ is an ideal we have
\begin{align*}
\mathbb{A}_{c'}(0)&=\{x+\mathbb{A}_c(0): (c+\mathbb{A}_c(0))(x+\mathbb{A}_c(0))=0'=\mathbb{A}_c(0)\}\\
&=\{x+\mathbb{A}_c(0): cx\in \mathbb{A}_c(0)\}\\
&=\{x+\mathbb{A}_c(0): L_c^2x=0\}
\end{align*}
Suppose additionally that  $c$ is semisimple. Then we can decompose $x$ into a direct sum $x=x_0+\sum_{\lambda\ne0}x_\lambda$, where $x_\lambda\in \mathbb{A}_c(\lambda)$. This yields
$$
0=L_c^2x=\sum_{\lambda\ne0}\lambda^2 x_\lambda,
$$
hence $x_\lambda=0$ for all $\lambda\ne0$, i.e. $x=x_0\in \mathbb{A}_c(0)$. This implies $x'=0'$, hence the proposition follows.
\end{proof}

\begin{definition}
A medial algebra $\mathbb{A}$ is said to be \textit{reduced}  if $\Idm^\times(\mathbb{A})=\Idm(\mathbb{A})$.
\end{definition}

\begin{remark}
Note that $\dim \mathbb{A}/\mathbb{A}_c(0)=\dim \mathbb{A}-\dim \mathbb{A}_c(0)<\dim \mathbb{A}$ if $\mathbb{A}_c(0)$ is nontrivial. Hence, continuing in an obvious way  factorizing out of $\mathbb{A}$ nontrivial Peirce zero subspaces, one arrives in finitely many steps to a reduced medial algebra $\tilde{A}$. We shall  consider reduced medial algebras in more detail below.
\end{remark}

\subsection{The Peirce structure of reduced medial algebras}

Throughout this section we assume, unless stated otherwise, that $\mathbb{A}$ is a reduced medial algebra of dimension $\ge 2$. Since $\ker L_c=0$ for any idempotent $c$, $L_c$ is an algebra automorphism. Given $\lambda\in \Field$ we denote
$$
\mathbb{A}_c(\lambda)=\ker (L_c-\lambda \mathbf{1})
$$

\begin{proposition}
\label{pro:spect}
Let $c_1,c_2\in \Idm(\mathbb{A})$. Then $c_1c_2\in \Idm(\mathbb{A})$ and
\begin{align}\label{mulPeirce}
\mathbb{A}_{c_1}(\lambda_1)\mathbb{A}_{c_2}(\lambda_2)&\subset \mathbb{A}_{c_1c_2}(\lambda_1\lambda_2),
\\
\label{mulPeirceC}
\mathbb{A}_{c_1}(\lambda_1)\mathbb{A}_{c_1}(\lambda_2)&\subset \mathbb{A}_{c_1}(\lambda_1\lambda_2).
\end{align}
In particular,
\begin{equation}\label{mulPeirce2}
\sigma(c_1)\cup \sigma(c_2)\subset \sigma(c_1c_2)
\end{equation}
and
\begin{equation}\label{mulPeirce3}
\dim \mathbb{A}_{c_j}(\lambda_j)\le \dim \mathbb{A}_{c_ic_j}(\lambda_j), \qquad \forall \lambda_i\in \sigma(c_i), i=1,2.
\end{equation}
\end{proposition}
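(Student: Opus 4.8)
The plan is to extract everything from a single application of the medial magma identity \eqref{magma} combined with the fact that, in a reduced algebra, every multiplication operator $L_c$ is injective. First, the closure $c_1c_2\in\Idm(\mathbb{A})$ is immediate: in a reduced algebra $\Idm(\mathbb{A})=\Idm^{\times}(\mathbb{A})$, and by Proposition~\ref{pro:quasi} this set is a quasigroup, hence closed under multiplication. So the only real content lies in the two eigenspace inclusions, from which the spectrum and dimension statements will follow formally.

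For the key inclusion \eqref{mulPeirce}, I would take $x\in\mathbb{A}_{c_1}(\lambda_1)$ and $y\in\mathbb{A}_{c_2}(\lambda_2)$ and apply \eqref{magma} with the substitution $(x,y,z,w)\mapsto(c_1,c_2,x,y)$, which gives
\[
(c_1c_2)(xy)=(c_1x)(c_2y)=(\lambda_1 x)(\lambda_2 y)=\lambda_1\lambda_2\,(xy),
\]
so that $xy\in\mathbb{A}_{c_1c_2}(\lambda_1\lambda_2)$. This one line is the entire heart of the proposition. The special case \eqref{mulPeirceC} is then obtained by setting $c_1=c_2$ and using $c_1c_1=c_1$, so $\mathbb{A}_{c_1}(\lambda_1\lambda_2)$ replaces $\mathbb{A}_{c_1c_2}(\lambda_1\lambda_2)$ on the right.

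For the remaining two assertions I would fix indices $i,j\in\{1,2\}$ and an eigenvalue $\lambda\in\sigma(c_j)$, and observe that $c_i\in\mathbb{A}_{c_i}(1)$ since $c_i c_i=c_i$. Applying \eqref{mulPeirce} to $c_i$ and $\mathbb{A}_{c_j}(\lambda)$, and using commutativity $c_ic_j=c_jc_i$, shows that the operator $L_{c_i}$ carries $\mathbb{A}_{c_j}(\lambda)$ into $\mathbb{A}_{c_ic_j}(\lambda)$. Because $\mathbb{A}$ is reduced, $\ker L_{c_i}=\mathbb{A}_{c_i}(0)=0$, so this restriction is an \emph{injection} $\mathbb{A}_{c_j}(\lambda)\hookrightarrow\mathbb{A}_{c_ic_j}(\lambda)$; comparing dimensions yields \eqref{mulPeirce3}. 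In particular, if $\lambda\in\sigma(c_j)$ then $\mathbb{A}_{c_j}(\lambda)\neq0$, whence its injective image is nonzero and $\lambda\in\sigma(c_ic_j)$; running this over $(i,j)=(2,1)$ and $(i,j)=(1,2)$ produces both inclusions in \eqref{mulPeirce2}. The proof presents no deep obstacle, but the point genuinely requiring care is the role of reducedness: it is precisely the injectivity of $L_{c_i}$ that guarantees the products do not collapse to zero, without which the spectrum inclusion \eqref{mulPeirce2} could fail even though the eigenspace inclusion \eqref{mulPeirce} still holds.
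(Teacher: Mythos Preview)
Your proof is correct and follows essentially the same approach as the paper: the core inclusion \eqref{mulPeirce} is obtained by a single application of the medial identity, \eqref{mulPeirceC} by specializing $c_2=c_1$, and the spectrum and dimension statements by observing that $L_{c_i}$ maps $\mathbb{A}_{c_j}(\lambda)$ injectively into $\mathbb{A}_{c_ic_j}(\lambda)$ thanks to reducedness. Your derivation of the last two parts as the special case $c_i\in\mathbb{A}_{c_i}(1)$ of \eqref{mulPeirce} is slightly cleaner than the paper's, which recomputes this inclusion via a direct application of the medial identity, but the two arguments are the same in substance.
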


\begin{proof}
If some of $\mathbb{A}_{c_1}(\lambda_1)$ and $\mathbb{A}_{c_2}(\lambda_2)$ is zero, \eqref{mulPeirce}, \eqref{mulPeirceC} are trivial. Suppose that both  $\mathbb{A}_{c_1}(\lambda_1)$ and $\mathbb{A}_{c_2}(\lambda_2)$ are nonzero and let
$c_ix_i=\lambda_i x_i$, $x_i\ne 0$, $i=1,2$. Then $c_3=c_1c_2$ is  an idempotent, and, since $\mathbb{A}$ is reduced, we have $c_3\ne0$. Using \eqref{magma} obtain
$$
c_3(x_1x_2)=(c_1c_2)(x_1x_2)=(c_1x_1)(c_2x_2)=\lambda_1\lambda_2  x_1x_2.
$$
Thus $x_1x_2\in \mathbb{A}_{c_3}(\lambda_1\lambda_2)$. This implies \eqref{mulPeirce}. Setting $c_2=c_1$ we arrive at \eqref{mulPeirceC}. Finally, since $x_2$ is a $\lambda_2$-eigenvector of $c_2$,
$$
\lambda_2c_1x_2=c_1\lambda_2x_2=c_1(c_2x_2)=(c_1c_1)(c_2x_2)=
(c_1c_2)(c_1x_2)=c_3(c_1x_2).
$$
Since $\mathbb{A}$ is reduced, we have $c_1x_2\ne0$. Therefore $c_1x_2$ is a $\lambda_2$-eigenvector of $c_3$. This proves \eqref{mulPeirce2}. Furthermore, since $L_{c_1}$ is non-degenerated linear endomorphism, we also deduce that
\begin{equation}\label{mulPeirce1}
0\ne L_{c_1}(\mathbb{A}_{c_2}(\lambda_2))\subset \mathbb{A}_{c_1c_2}(\lambda_2).
\end{equation}
Since $\mathbb{A}$ is reduced, \eqref{mulPeirce1} implies \eqref{mulPeirce3}.
\end{proof}

Note that the product in the left hand side maybe zero such that the inclusion in \eqref{mulPeirceC} is trivial in that case.

\begin{proposition}\label{pro:semi}
If  $c_1$ is a semi-simple idempotent then for any idempotent $c_2\in \Idm(\mathbb{A})$, the idempotent $c_1c_2$ is also semi-simple and  $$\spec (c_1)=\spec (c_1c_2).
$$
\end{proposition}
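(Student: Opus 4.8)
The plan is to reduce both assertions to a single observation: in a reduced medial algebra the left-multiplication operator attached to $c_1c_2$ is \emph{similar} to the one attached to $c_1$. Since $\mathbb{A}$ is reduced, every $L_c$ with $c\in\Idm(\mathbb{A})$ is invertible, and by Proposition~\ref{pro:quasi} the product again lies in $\Idm(\mathbb{A})=\Idm^{\times}(\mathbb{A})$, so $c_1c_2$ is a genuine (nonzero) idempotent. I would start from the transformation rule \ref{ideal5} of Proposition~\ref{pro:endom}, namely $L_{c_2}L_{c_1}=L_{c_2c_1}L_{c_2}$, invoke commutativity $c_2c_1=c_1c_2$, and multiply on the right by $L_{c_2}^{-1}$ to obtain
\begin{equation}\label{conj-semi}
L_{c_1c_2}=L_{c_2}\,L_{c_1}\,L_{c_2}^{-1}.
\end{equation}

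With \eqref{conj-semi} in hand both claims are immediate. First, similar operators have the same characteristic polynomial, hence the same multiset of eigenvalues, which yields $\spec(c_1)=\spec(c_1c_2)$ (in agreement with the equal-characteristic-polynomial statement of Proposition~\ref{pro:quasi}). Second, conjugation by the invertible map $L_{c_2}$ carries a diagonalizable operator to a diagonalizable one: if $\mathbb{A}=\bigoplus_{\lambda\in\sigma(c_1)}\mathbb{A}_{c_1}(\lambda)$ is the Peirce decomposition relative to the semisimple idempotent $c_1$, then applying $L_{c_2}$ gives the direct sum decomposition $\mathbb{A}=\bigoplus_{\lambda}L_{c_2}\big(\mathbb{A}_{c_1}(\lambda)\big)$, and \eqref{conj-semi} shows that each summand $L_{c_2}\big(\mathbb{A}_{c_1}(\lambda)\big)$ consists of $\lambda$-eigenvectors of $L_{c_1c_2}$. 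Hence $L_{c_1c_2}$ is diagonalizable, i.e. $c_1c_2$ is semisimple.

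There is essentially no hard step here; the whole content is recognizing that \ref{ideal5} upgrades, in the reduced setting where $L_{c_2}$ is invertible, to the conjugacy \eqref{conj-semi}. The one point that deserves care is that one must use genuine similarity and not merely equality of characteristic polynomials: two operators can share a characteristic polynomial while one is diagonalizable and the other is not, so the transfer of semisimplicity relies specifically on the conjugating operator $L_{c_2}$ furnished by \eqref{conj-semi}, not on the spectral coincidence alone.
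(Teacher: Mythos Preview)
Your proof is correct. The argument is essentially the same as the paper's, but packaged a bit more cleanly: the paper works directly with the Peirce decomposition, using the eigenspace inclusion $L_{c_2}(\mathbb{A}_{c_1}(\lambda))\subset \mathbb{A}_{c_1c_2}(\lambda)$ from Proposition~\ref{pro:spect} and a dimension count to conclude $L_{c_2}(\mathbb{A}_{c_1}(\lambda))=\mathbb{A}_{c_1c_2}(\lambda)$, whereas you invoke the operator identity $L_{c_1c_2}=L_{c_2}L_{c_1}L_{c_2}^{-1}$ (already derived in the proof of Proposition~\ref{pro:quasi}) and read off both semisimplicity and the spectral equality at once. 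These are two phrasings of the same fact; your version has the advantage that the transfer of diagonalizability is automatic from similarity, while the paper's version makes the eigenspace correspondence $\mathbb{A}_{c_1c_2}(\lambda)=L_{c_2}(\mathbb{A}_{c_1}(\lambda))$ explicit.
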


\begin{proof}
By the assumption we have the following direct sum decomposition:
$$
A=\bigoplus_{\lambda_i\in \sigma (c_1)} \mathbb{A}_{c_1}(\lambda_i),
$$
where all $\lambda_i$ are  distinct. Since $\mathbb{A}$ is reduced, $L_{c_2}$ is a nondegenerate endomorphism of $\mathbb{A}$, hence
$$
A=\bigoplus_{\lambda_i\in \sigma (c_1)} L_{c_2}(\mathbb{A}_{c_1}(\lambda_i)).
$$
Comparing this with \eqref{mulPeirce1} and \eqref{mulPeirce3} imply for dimensional reasons that
$$
L_{c_2}(\mathbb{A}_{c_1}(\lambda_i))=\mathbb{A}_{c_1c_2}(\lambda_i),
$$
hence
$$
A=\bigoplus_{\lambda_i\in \sigma (c_1)} \mathbb{A}_{c_1c_2}(\lambda_i).
$$
yields that $c_1c_2$ is a semi-simple idempotent. Since $\dim \mathbb{A}_{c_1c_2}(\lambda_i)=\dim \mathbb{A}_{c_1}(\lambda_i)$, the spectra of $c_1c_2$ and $c_1$ are identical.
\end{proof}

\begin{corollary}\label{cor:semi}
If  $c_1$ and $c_2$ are semi-simple idempotents then $\spec (c_1)=\spec (c_2)$.
\end{corollary}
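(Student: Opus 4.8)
The plan is to derive this statement directly from Proposition~\ref{pro:semi} by invoking it twice, once for each of the two given idempotents, and then exploiting the commutativity of $\mathbb{A}$ to identify the two auxiliary products that arise. Nothing beyond the preceding proposition is really needed.

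First I would apply Proposition~\ref{pro:semi} with $c_1$ playing the role of the distinguished semi-simple idempotent and $c_2$ playing the role of the arbitrary idempotent. Since $\mathbb{A}$ is reduced (the standing assumption of this section), Proposition~\ref{pro:spect} guarantees that $c_1c_2\in\Idm(\mathbb{A})$ is a \emph{nonzero} idempotent, so the hypotheses of Proposition~\ref{pro:semi} are met; this yields
\begin{equation*}
\spec(c_1)=\spec(c_1c_2).
\end{equation*}
Next I would repeat the argument with the roles reversed, using that $c_2$ is \emph{also} assumed semi-simple: applying Proposition~\ref{pro:semi} to the pair $(c_2,c_1)$ gives
\begin{equation*}
\spec(c_2)=\spec(c_2c_1).
\end{equation*}
Finally, because $\mathbb{A}$ is commutative we have $c_1c_2=c_2c_1$, so the two products are literally the same element and hence $\spec(c_1c_2)=\spec(c_2c_1)$. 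Chaining the three relations produces
\begin{equation*}
\spec(c_1)=\spec(c_1c_2)=\spec(c_2c_1)=\spec(c_2),
\end{equation*}
as claimed.

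There is no genuine computational obstacle here, since the entire spectral content is already encapsulated in Proposition~\ref{pro:semi}. The only point requiring a moment's care is confirming that the product $c_1c_2$ is a legitimate nonzero idempotent, so that the proposition may be applied at all; this is precisely where the reducedness hypothesis enters, via Proposition~\ref{pro:spect}. I would also emphasize that the symmetric use of the hypothesis — that \emph{both} idempotents are semi-simple — is essential: a single application of Proposition~\ref{pro:semi} only relates $c_1$ to $c_1c_2$, and it is the second application, combined with commutativity, that closes the loop back to $c_2$.
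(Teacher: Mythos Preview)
Your proof is correct and follows exactly the same approach as the paper: apply Proposition~\ref{pro:semi} twice (once for each semi-simple idempotent) and use commutativity to identify $c_1c_2$ with $c_2c_1$. The paper's proof is simply a one-line compression of your argument.
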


\begin{proof}
It follows from Proposition~\ref{pro:semi} that the spectra of $c_1$,  $c_2$ and $c_1c_2$ are identical, that implies the desired conclusion.
\end{proof}

\begin{corollary}\label{cor:isosp}
If all idempotents of a reduced medial algebra $\mathbb{A}$ are semi-simple then $\mathbb{A}$ is isospectral.
\end{corollary}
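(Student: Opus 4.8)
The plan is to deduce the statement directly from Corollary~\ref{cor:semi}, which already contains all the real work. Recall that $\mathbb{A}$ being reduced means $\Idm^\times(\mathbb{A})=\Idm(\mathbb{A})$, so every nonzero idempotent $c$ has $\ker L_c=0$, i.e. an invertible multiplication operator. This is exactly the running hypothesis under which Propositions~\ref{pro:spect} and~\ref{pro:semi} and Corollary~\ref{cor:semi} were proved, so those results apply to $\mathbb{A}$ without any further assumption.

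First I would fix two arbitrary nonzero idempotents $c_1,c_2\in\Idm(\mathbb{A})$. By hypothesis all idempotents of $\mathbb{A}$ are semi-simple, in particular $c_1$ and $c_2$ are, so Corollary~\ref{cor:semi} applies verbatim and gives $\spec(c_1)=\spec(c_2)$ as multisets. Since $c_1$ and $c_2$ were arbitrary, every pair of nonzero idempotents shares the same spectrum; equivalently, there is a single common value of $\spec(c)$ as $c$ ranges over $\Idm(\mathbb{A})$. By definition this says precisely that $\mathbb{A}$ is isospectral, which completes the argument.

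There is essentially no obstacle at this stage: the genuine content, namely that the product $c_1c_2$ is again a (semi-simple) idempotent with $\spec(c_1)=\spec(c_1c_2)$, has already been isolated in Proposition~\ref{pro:semi}, and its symmetric consequence in Corollary~\ref{cor:semi}. The only conceptual point is the passage from the pairwise equality of spectra to the global ``same spectrum'' condition of the definition, and this is immediate because the relation of having equal spectrum is an equivalence relation and, under the semi-simplicity hypothesis, holds for every pair in $\Idm(\mathbb{A})$.
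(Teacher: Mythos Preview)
Your proof is correct and is exactly the intended argument: the paper states this corollary without proof precisely because it follows immediately from Corollary~\ref{cor:semi}, just as you deduce.
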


\begin{corollary}\label{cor:isosp2}
If a reduced medial algebra $\mathbb{A}$ is generic and all its idempotents are semi-simple then  $\mathbb{A}$ is isospectral and its spectrum consists of the primitive roots of unity of degree $\dim \mathbb{A}$:
$$\spec (c)=\{\epsilon_n^k: 1\le k\le n=\dim A\}, \qquad \forall c\in \Idm(\mathbb{A}).
$$
\end{corollary}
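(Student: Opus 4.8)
The plan is to combine the isospectrality that Corollary~\ref{cor:isosp} already supplies with the Euler--Jacobi syzygy \eqref{polynom}, letting isospectrality collapse the sum in \eqref{polynom} to a single term. First I would invoke Corollary~\ref{cor:isosp}: since $\mathbb{A}$ is reduced medial and all of its idempotents are semi-simple, $\mathbb{A}$ is isospectral, so every $c\in\Idm(\mathbb{A})$ has one and the same spectrum. Because $\spec(c)$ is by definition the multiset of roots of the characteristic polynomial $\chi_c(t)=\det(L_c-t\mathbf{1})$, and each $\chi_c$ carries the fixed leading coefficient $(-1)^n$, having equal spectra is the same as having a single common characteristic polynomial $\chi_c(t)\equiv\chi(t)$ for all $c$. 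Note that semi-simplicity is needed only at this point, namely to produce isospectrality.

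Next I would bring in genericity. Working over the algebraically closed ground field (equivalently, in the complexification used to define genericity), $\mathbb{A}$ has exactly $2^n$ idempotents counting $0$, so $|\Idm(\mathbb{A})|=2^n-1$; moreover Theorem~\ref{th:gener} gives $\tfrac12\notin\sigma(\mathbb{A})$, whence $\chi(\tfrac12)\ne0$ and the denominators in \eqref{polynom} are harmless. Substituting the common polynomial into \eqref{polynom} then collapses the left-hand side, yielding
\[
(2^n-1)\,\frac{\chi(t)}{\chi(\tfrac12)}=2^n\,(1-t^n),
\]
so that $\chi(t)=\dfrac{2^n\chi(\tfrac12)}{2^n-1}\,(1-t^n)$ is a nonzero scalar multiple of $1-t^n$.

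To conclude I would read off the roots: since $1-t^n=-\prod_{k=0}^{n-1}(t-\epsilon_n^{k})$ factors into $n$ distinct simple linear factors, the eigenvalues of every $L_c$ are precisely the $n$-th roots of unity, each with multiplicity one. Thus $\spec(c)=\{\epsilon_n^{k}:1\le k\le n\}$ for all $c\in\Idm(\mathbb{A})$, which, together with the isospectrality already furnished by Corollary~\ref{cor:isosp}, is exactly the assertion. In particular $1=\epsilon_n^{n}$ is a simple eigenvalue, so every idempotent of $\mathbb{A}$ is primitive.

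The computation is short once these pieces are in place, so the only delicate point is the assembly itself: checking that isospectrality really produces one polynomial $\chi$ that factors out of the sum, and securing the two genericity inputs $|\Idm(\mathbb{A})|=2^n-1$ and $\chi(\tfrac12)\ne0$. As a sanity check I would verify the scalar $\tfrac{2^n\chi(1/2)}{2^n-1}$ against the known leading coefficient $(-1)^n$ of $\chi$ and test the displayed identity at $t=\tfrac12$ (a tautology) and at $t=1$ (which forces $\chi(1)=0$, confirming $1\in\spec(c)$); I would also note that, genericity being defined through the complexification, we may compute in characteristic zero, so dividing by $2^n-1$ is legitimate.
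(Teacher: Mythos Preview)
Your proof is correct and follows essentially the same route as the paper, which simply records ``Follows from the syzygy relation in Theorem~\ref{th4}''; you have spelled out precisely how that syzygy is used (and the same computation reappears verbatim in the paper's later Proposition~\ref{pro:isosp1}). The only cosmetic difference is that you invoke Corollary~\ref{cor:isosp} to obtain a single common $\chi$, whereas the paper could equally well cite Proposition~\ref{pro:quasi} directly, but the substance is identical.
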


\begin{proof}
Follows from the syzygy relation in Theorem~\ref{th4}.
\end{proof}

\section{Isospectral algebras}\label{sec:iso}

\subsection{Basic properties}
\begin{definition}
A commutative algebra is called \textit{isospectral} if all its nonzero idempotents have the same spectrum.
\end{definition}

We point out that we require in the definition that the spectra of each two idempotents coincide as a multi-set, i.e. counting the multiplicities of all eigenvalues.

In the generic case, one a priori knows that there exists exactly the maximal number $2^n-1$ idempotents, where $n=\dim \mathbb{A}$, so that the  condition `all idempotents' has a clear sense. In the nongeneric case, however, there may exist either nontrivial 2-nilpotent elements or infinitely many idempotents, and thus, containing the exceptional value $\frac12$ in the spectrum, see \cite{KrTk18a}, \cite{Tk18e}. The constancy of the spectrum still has an exact meaning, but the situation in the nongeneric case will be completely different. For example, if there is only few distinct idempotents (but many nilpotents)  then the constancy spectrum condition is not so much restrictive. On the other hand, there are nonisospectral generic algebras, for example, Hsiang algebras (see \cite{Tk18e} for the definition and basic properties of Hsiang algebras). It would be interesting to characterize the Peirce spectrum  of nonisospectral generic algebras.

In this paper we shall primarily concern with the generic case. Then the most natural choice of the ground field is the complex numbers $\mathbb{C}$ or an algebraically closed field $\Field$. Then by Corollary~5.2 in \cite{KrTk18a} the spectrum of a generic algebra consists of the primitive roots of unity of order $n=\dim \mathbb{A}$. More precisely, applying the syzygies constructed in \cite{KrTk18a}, one derives a more precise description.

\begin{proposition}
\label{pro:isosp1}
Let $\mathbb{A}$ be a generic  isospectral algebra over an algebraically closed field $\Field$. Let $n=\dim \mathbb{A}$. Then for any $c\in\Idm(\mathbb{A})$
\begin{equation}\label{circular}
\spec (c)=\{\epsilon_n^k: 1\le k\le n\}
\end{equation}
and the following Peirce decomposition holds:
$$
A=\bigoplus_{k=0}^{n-1}\mathbb{A}_c(\epsilon_n^k)=\bigoplus_{k=0}^{n-1}\Span{w_k},
$$
where $\{w_0,w_1,\ldots, w_{n-1}\}$ is an eigenbasis of $L_c$: $cw_k=\epsilon_n^k w_k$, where the corresponding Peirce eigenspaces are one-dimensional:
\begin{equation}\label{w1}
\mathbb{A}_c(\epsilon_n^k)=\Span{w_k}=\Field w_k.
\end{equation}
In particular, the algebra $\mathbb{A}$ does not contain $2$-nilpotents and
\begin{equation}\label{eqiso}
L_c^n=1, \qquad \forall c\in\Idm(\mathbb{A}).
\end{equation}
Furthermore, for any homogeneous polynomial map $H:\mathbb{A}\to \mathbb{A}$ of degree $1\le m\le n-1$,
\begin{align}
\sum_{c\in \Idm (\mathbb{A})}H(c)&=0.\label{syzy1}
\end{align}
In particular,
\begin{align}
\sum_{c\in \Idm(\mathbb{A})}c&=0.\label{syzy2}
\end{align}
\end{proposition}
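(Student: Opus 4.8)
The plan is to exploit genericity twice: once through the syzygy relation \eqref{polynom} to determine the common characteristic polynomial, and once through the Euler--Jacobi identity \eqref{EuJa4} for the vanishing sums. First I would record that, since $\Field$ is algebraically closed and $\mathbb{A}$ is generic, $\Idm(\mathbb{A})$ has exactly $2^n-1$ elements, and that isospectrality forces all of them to share a single characteristic polynomial $\chi_c(t)\equiv\chi(t)$ (equality of spectra as multisets is the same as equality of characteristic polynomials). Substituting this into \eqref{polynom} collapses the left-hand sum to one term, giving $(2^n-1)\,\chi(t)/\chi(\frac{1}{2})=2^n(1-t^n)$. Comparing the coefficients of $t^n$ on both sides (recall $\chi(t)=\det(L_c-t\mathbf{1})$ has leading term $(-1)^n t^n$) fixes the normalizing constant and yields $\chi(t)=(-1)^n(t^n-1)$. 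This collapse is the crux of the whole argument.

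From here the remaining spectral claims are pure linear algebra. Over $\Field$ the polynomial $t^n-1$ has $n$ pairwise distinct roots, namely the $n$-th roots of unity $\{\epsilon_n^k:1\le k\le n\}$ (the existence of the primitive root $\epsilon_n$ guarantees $\mathrm{char}\,\Field\nmid n$, so the roots are indeed distinct), which is \eqref{circular}, and each eigenvalue of $L_c$ has multiplicity one. Since $L_c$ acts on the $n$-dimensional space $\mathbb{A}$ with $n$ distinct eigenvalues, it is diagonalizable with one-dimensional eigenspaces; choosing an eigenvector $w_k$ spanning $\mathbb{A}_c(\epsilon_n^k)$ yields the asserted Peirce decomposition and \eqref{w1}. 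As $0$ is not an eigenvalue, $L_c$ is invertible, and since $L_c$ is diagonalizable with every eigenvalue an $n$-th root of unity, $L_c^n=\mathbf{1}$, which is \eqref{eqiso}. The absence of nonzero $2$-nilpotents I would deduce from genericity itself rather than from the syzygy: the idempotents are the affine zeros of the quadratic system $x^2=x$, and attaining the full B\'ezout count of $2^n$ distinct affine solutions leaves no zeros at infinity, whereas the zeros at infinity are precisely the projectivized solutions of the top-degree part $x^2=0$, i.e. the nonzero $2$-nilpotents (cf. \cite{KrTk18a}).

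Finally, for the vanishing sums I would apply the Euler--Jacobi identity \eqref{EuJa4} to the given map $H$: its coordinate functions have degree $m\le n-1$, so the hypothesis of Theorem~\ref{th4} is satisfied. That sum runs over $\Idm_0(\mathbb{A})$, but the contribution of the idempotent $0$ vanishes because $H$ is homogeneous of positive degree, whence $H(0)=0$. Using once more that $\chi_c(\frac{1}{2})\equiv\chi(\frac{1}{2})=(-1)^n(2^{-n}-1)\ne 0$ is constant over $\Idm(\mathbb{A})$, one factors it out and obtains $\sum_{c\in\Idm(\mathbb{A})}H(c)=0$, which is \eqref{syzy1}; the special case $H(x)=x$ gives \eqref{syzy2}. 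I expect the only genuinely delicate point to be the clean justification of ``no nonzero $2$-nilpotents'', as this is the one conclusion not produced by the syzygy machinery but rather by the underlying B\'ezout geometry of the generic case; everything else reduces to the coefficient comparison in the collapsed syzygy and to routine diagonalization.
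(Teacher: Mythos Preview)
Your proof is correct and follows essentially the same route as the paper's: collapse the syzygy identity \eqref{polynom} using isospectrality to pin down the common characteristic polynomial as $(-1)^n(t^n-1)$, read off the one-dimensional Peirce decomposition from the $n$ distinct eigenvalues, and derive \eqref{syzy1}--\eqref{syzy2} from \eqref{EuJa4} using the constancy of $\chi_c(\tfrac12)$ together with $H(0)=0$. Your justification of the absence of $2$-nilpotents via the B\'ezout/points-at-infinity argument is a touch more explicit than the paper's (which simply invokes genericity and the reference \cite{KrTk18a}), but the overall architecture is identical.
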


\begin{proof}
By the assumptions, there exists exactly $2^n-1$ nonzero idempotents in $\mathbb{A}$ and the characteristic polynomials $\chi_c(t)=P(t)$ for some fixed polynomial $P$ of degree $n$. Applying  Theorem~\ref{th:gener}, we obtain from \eqref{polynom} that $(2^n-1)P(t)=2^n(1-t^n)P(\frac12)$ which immediately implies $P(t)=t^n-1$. Since the spectrum of each nonzero idempotent $c$ in $\mathbb{A}$ is the multiset of all roots of $P(t)$, we conclude that $L_c$ has exactly $n$ distinct eigenvalues $\{1,\epsilon_n, \ldots, \epsilon_n^{n-1}\}$, where $\epsilon_n$ is an $n$th primitive root of unity in $\Field$. In particular, for dimension reasons, we conclude that for each eigenvalue $\epsilon_n^{k}$ its eigenspace is one-dimensional. Furthermore, \eqref{syzy1} and \eqref{syzy2} follows from \eqref{EuJa4} and \eqref{EuJa2} respectively.
\end{proof}

%

Note that in Proposition~\ref{pro:isosp1} one can additionally assume that  $w_0=c$. This motivates

\begin{definition}
Let $c\in \Idm(\mathbb{A})$. A basis  $\{w_i\}_{0\le i\le n-1}$ is called an \textit{eigenbasis associated with $c$} if $w_0=c$ and
$$
cw_i=\epsilon_n^i w_i \quad \text{for all $1\le i\le n-1$}.
$$
\end{definition}

Let  $y\in\mathbb{A}$ be a fixed element and $1\le m\le n-1$ be an integer. Applying $H(x)=L_x^my$ to \eqref{syzy1} yields
\begin{equation}\label{Lc0}
\sum_{c\in \Idm (\mathbb{A})}L_c^my=0.
\end{equation}

\begin{corollary}
\label{cor:trace}
Let $\mathbb{A}$ be an isospectral generic algebra.  For any $c\in \Idm(\mathbb{A})$ and integer $k$, there exists a linear form $\theta_{c,k}(y):\mathbb{A}\to \Field$ such that
\begin{equation}\label{zeta1}
y+\epsilon_n^{-k}L_cy+\epsilon_n^{-2k}L_c^{2}y+\ldots +\epsilon_n^{-(n-1)k}L_c^{n-1}y=\theta_{c,k}(y)w_k,
\end{equation}
and
$$
\theta_{c,k}(w_j)=
\left\{
\begin{array}{cl}
n& \text{if $k=j$}\\
0& \text{if $k\ne j$}.
\end{array}
\right.
$$
In particular, if $y\ne 0$ then $y,L_cy,L_c^{2}y,\ldots,L_c^{n-1}y$ is a basis of $\mathbb{A}$ as a vector space and
\begin{equation}\label{zeta10}
y+L_cy+L_c^{2}y+\ldots +L_c^{n-1}y=\theta_{c,0}(y)c.
\end{equation}

\end{corollary}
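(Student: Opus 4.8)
The plan is to recognize the left-hand side of \eqref{zeta1} as a discrete Fourier (spectral) projection attached to the diagonalizable operator $L_c$. Set
\[
P_{c,k}:=\sum_{m=0}^{n-1}\epsilon_n^{-mk}L_c^m,
\]
so that the left-hand side of \eqref{zeta1} is exactly $P_{c,k}y$. By Proposition~\ref{pro:isosp1} the operator $L_c$ is semisimple with simple spectrum $\{\epsilon_n^0,\dots,\epsilon_n^{n-1}\}$ and one-dimensional Peirce eigenspaces $\mathbb{A}_c(\epsilon_n^j)=\Span{w_j}$, where $\{w_j\}_{j=0}^{n-1}$ is the eigenbasis associated with $c$ (so $w_0=c$). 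This reduces everything to a computation on the eigenbasis.

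First I would evaluate $P_{c,k}$ on each $w_j$. Since $L_c^m w_j=\epsilon_n^{mj}w_j$, we get $P_{c,k}w_j=\bigl(\sum_{m=0}^{n-1}\epsilon_n^{m(j-k)}\bigr)w_j$. The inner sum is the standard orthogonality relation for $n$th roots of unity: it equals $n$ when $j\equiv k$ and $0$ otherwise, because for $j\not\equiv k\pmod n$ the ratio $\epsilon_n^{j-k}\neq1$ and the geometric sum telescopes to $(\epsilon_n^{n(j-k)}-1)/(\epsilon_n^{j-k}-1)=0$. Here $n\neq0$ in $\Field$, since $\Field$ contains a primitive $n$th root of unity. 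Hence $P_{c,k}w_j=n\,\delta_{jk}w_k$, i.e. $P_{c,k}$ is $n$ times the Peirce projection onto the line $\Span{w_k}$.

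It follows that $P_{c,k}(\mathbb{A})\subset\Span{w_k}$, so for every $y$ there is a unique scalar, which I call $\theta_{c,k}(y)$, with $P_{c,k}y=\theta_{c,k}(y)w_k$; linearity of $P_{c,k}$ makes $\theta_{c,k}$ a linear form, and the evaluation on $w_j$ computed above gives precisely $\theta_{c,k}(w_j)=n\delta_{jk}$. This establishes \eqref{zeta1} together with the stated values of $\theta_{c,k}$, and \eqref{zeta10} is just the special case $k=0$, where $\epsilon_n^{-m\cdot0}=1$ and $w_0=c$.

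For the ``in particular'' basis assertion, expand $y=\sum_{j}y_jw_j$; then the coordinate matrix of $(y,L_cy,\dots,L_c^{n-1}y)$ in the basis $\{w_j\}$ factors as the Vandermonde matrix $(\epsilon_n^{mj})_{m,j}$ times $\mathrm{diag}(y_0,\dots,y_{n-1})$, whose determinant is a nonzero multiple of $\prod_j y_j$. Thus these $n$ vectors form a basis exactly when every $y_j\neq0$, equivalently (since $\theta_{c,j}(y)=ny_j$ by the previous paragraph) when $\theta_{c,j}(y)\neq0$ for all $j$, i.e. when $y$ is a cyclic vector for $L_c$. The main point to be careful about is therefore this last clause: as literally phrased the hypothesis $y\neq0$ is not sufficient (for instance $y=c=w_0$ gives $L_c^m y=y$ for all $m$), and the basis statement genuinely requires $y$ to have nonzero component in each Peirce line. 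This holds for generic $y$, and is the one step where I would either strengthen the hypothesis or explicitly restrict to such cyclic $y$.
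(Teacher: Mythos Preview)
Your argument for \eqref{zeta1} and the evaluation of $\theta_{c,k}(w_j)$ is correct and essentially the same as the paper's: both rely on the eigenbasis from Proposition~\ref{pro:isosp1} and the orthogonality relation $\sum_{m=0}^{n-1}\epsilon_n^{m(j-k)}=n\delta_{jk}$. The only cosmetic difference is that the paper first applies $L_c$ to the left-hand side $z$ and uses $L_c^n=\mathbf{1}$ to deduce $L_cz=\epsilon_n^k z$, hence $z\in\mathbb{A}_c(\epsilon_n^k)=\Span{w_k}$, and only then evaluates on $w_j$; you instead evaluate $P_{c,k}$ directly on the eigenbasis from the start. Both routes are equally short.

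Your caution about the ``in particular'' basis clause is well placed, and in fact you have identified a genuine gap in the paper's own argument. The paper inverts the Vandermonde system \eqref{zeta1} (over $k=0,\dots,n-1$) to express each $L_c^m y$ in terms of the vectors $\theta_{c,k}(y)w_k$, and then asserts that since $\{w_k\}$ is a basis so is $\{L_c^m y\}$. But this inference is only valid when every $\theta_{c,k}(y)$ is nonzero, i.e.\ when $y$ has nonzero component in each Peirce line; your counterexample $y=c=w_0$ (giving $L_c^m y=c$ for all $m$) shows that the hypothesis $y\ne0$ is insufficient as literally stated. Your proposed strengthening (require $y$ to be cyclic for $L_c$, equivalently $\theta_{c,k}(y)\ne0$ for all $k$) is exactly the right fix.
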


\begin{proof}
Let $z$ denote the left hand side of \eqref{zeta1}. Applying $L_c$ to $z$ and rearranging we obtain
\begin{align*}
L_cz&=L_cy+\epsilon_n^{-k}L^2_cy+\epsilon_n^{-2k}L_c^{2}y+\ldots +\epsilon_n^{-(n-1)k}L_c^{n}y\\
&=\epsilon_n^{k}L_c^{n}y+L_cy+\epsilon_n^{-k}L^2_cy+\ldots +\epsilon_n^{-(n-2)k}L_c^{n-1}y\\
&=\epsilon_n^{k}(y+\epsilon_n^{-k}L^2_cy+\epsilon_n^{-2k}L_c^{2}y+\ldots +\epsilon_n^{-(n-1)k}L_c^{n-1}y)\\
&=\epsilon_n^kz,
\end{align*}
hence $z\in \mathbb{A}_c(\epsilon_n^k)$. By Proposition~\ref{pro:isosp1}, $\mathbb{A}_c(\epsilon_n^k)=\Span{w_k}$, hence $z=\lambda w_k$, for some complex number $\lambda$. Clearly, $\theta:y\to \lambda$ is a linear form, and \eqref{zeta1} follows. Substituting $y=w_j$ in \eqref{zeta1} yields
\begin{align*}
\theta_{c,k}(w_j)w_k&=(1+\epsilon_n^{j-k}+\epsilon_n^{2(j-k)}+\ldots +\epsilon_n^{(n-1)(j-k)})w_j\\
&=
w_j\left\{
\begin{array}{cl}
n& \text{if $k=j$}\\
\frac{\epsilon_n^{n(j-k)}-1}{\epsilon_n^{j-k}-1}& \text{if $k\ne j$}.
\end{array}
\right.
\end{align*}
Since $\epsilon_n^{n(j-k)}=1$, the sum vanishes whenever $j\ne k$, thus the claim follows. Also, if we consider \eqref{zeta1} as a system of $n$ equations (for $0\le k\le n-1$) with respect to $y,L_cy,L_c^{2}y,\ldots,L_c^{n-1}y$ then its determinant is of Vandermond type and it is obviously different of zero. Since the system $\{w_0,w_1,\ldots,w_{n-1}\}$ is a basis in $A$, so is also $y,L_cy,L_c^{2}y,\ldots,L_c^{n-1}y$.
\end{proof}

Summing up the identities \eqref{zeta10} for all $c\in \Idm(\mathbb{A})$, we obtain using \eqref{Lc0}  that

\begin{corollary}
\label{cor:trace1}
For any $y\in\mathbb{A}$
\begin{equation}\label{zeta2}
y=\frac{1}{(2^n-1)}\sum_{c\in \Idm(\mathbb{A})}\theta_{c,0}(y)c.
\end{equation}
In particular, $\Idm(\mathbb{A})$ spans $\mathbb{A}$, i.e. any isospectral generic algebra is generated by idempotents (even on the level of the vector space).
\end{corollary}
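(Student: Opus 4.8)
The plan is to obtain \eqref{zeta2} by summing the single-idempotent identity \eqref{zeta10} over the entire idempotent set and then exploiting the syzygy \eqref{Lc0} to annihilate all the higher-power contributions. Concretely, I would start from the relation established in Corollary~\ref{cor:trace}, namely $y+L_cy+L_c^2y+\cdots+L_c^{n-1}y=\theta_{c,0}(y)c$, which holds for each fixed $c\in\Idm(\mathbb{A})$, and sum both sides over all $c\in\Idm(\mathbb{A})$.

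First I would regroup the left-hand side by the power $m$ of $L_c$. The term with $m=0$ contributes $\sum_{c}y=|\Idm(\mathbb{A})|\,y$, and since $\mathbb{A}$ is generic of dimension $n$ there are exactly $2^n-1$ nonzero idempotents (Proposition~\ref{pro:isosp1}), so this layer equals $(2^n-1)y$. For each fixed $m$ with $1\le m\le n-1$, the term $\sum_{c\in\Idm(\mathbb{A})}L_c^m y$ vanishes by \eqref{Lc0}. Hence the whole left-hand side collapses to $(2^n-1)y$, while the right-hand side is by definition $\sum_{c}\theta_{c,0}(y)c$. Dividing by $2^n-1$ (which is nonzero in $\Field$, as the very count of $2^n-1$ distinct idempotents presupposes) yields \eqref{zeta2}.

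The spanning statement is then immediate: \eqref{zeta2} exhibits an arbitrary $y\in\mathbb{A}$ as an explicit $\Field$-linear combination of the idempotents $c\in\Idm(\mathbb{A})$, so $\Idm(\mathbb{A})$ spans $\mathbb{A}$ as a vector space. I do not anticipate any genuine obstacle here, since the two inputs \eqref{zeta10} and \eqref{Lc0} do all the work; the only point meriting a moment's care is the clean bookkeeping of which powers survive --- namely that exactly the $m=0$ layer persists while all layers $1\le m\le n-1$ are killed by the Euler--Jacobi syzygy.
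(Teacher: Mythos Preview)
Your proof is correct and follows exactly the approach the paper takes: sum \eqref{zeta10} over all $c\in\Idm(\mathbb{A})$, use \eqref{Lc0} to kill the layers $1\le m\le n-1$, and note that the $m=0$ layer contributes $(2^n-1)y$. The spanning consequence is then immediate, just as you say.
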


\begin{remark}
 Combining the latter with the property that any idempotent has the same spectrum and fusion rules, one concludes that an isospectral algebra is \textit{axial} in the sense of definitions in \cite{HRS15b},  \cite{Rehren17}, \cite{KMcSh2022}, \cite{McSh2022}. The axes in the present context are all idempotents of $\mathbb{A}$.
\end{remark}

%
%

\subsection{The generic trace and the generic determinant}
Let $c\in \Idm(\mathbb{A})$ and let $\{w_i\}_{0\le i\le n-1}$ be an eigenbasis associated with $c$. Define for any $x=\sum_{i=0}^{n-1}a_iw_i$ the \textit{generic trace}
$$
T(x):=\sum_{i=0}^{n-1}a_i.
$$
We have
$$
L_c^kx=\sum_{i=0}^{n-1}a_iL_c^kw_i=
\sum_{i=0}^{n-1}\epsilon_n^{ik} a_iw_i,
$$
hence
$$
T(L_c^kx)=\sum_{i=0}^{n-1}\epsilon_n^{ik}a_i.
$$
Let us define
\begin{equation}\label{delta}
\delta(x):=\prod_{k=0}^{n-1}T(L_c^kx)=\det X(x),
\end{equation}
where $X$ is the circulant matrix of $x$:
$$
X_{i,j}=x_{\bar j-\bar i+1}, \qquad \bar i, \bar j\in \mathbb{Z}_n.
$$
The product will be called the \textit{generic determinant} of $x$ in $\mathbb{A}$.


\subsection{The generic determinant}
\begin{theorem}
Let $\mathbb{A}$ be a isospectral generic algebra. Then it satisfies the identity
\begin{equation}\label{identity}
L_x^ny=\delta(x)y,
\end{equation}
where $\delta(x)$ is the generic determinant of $x$.
\end{theorem}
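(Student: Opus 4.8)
The plan is to upgrade the statement to the \emph{operator} identity $L_x^{n}=\delta(x)\,\mathbf 1$; evaluating it at an arbitrary $y$ returns \eqref{identity}, and the special case $y=x$ recovers the power identity $x^{n+1}=\delta(x)x$ of \eqref{Bn}. Both $L_x^{n}$ and $\delta(x)\mathbf 1$ are polynomial (matrix-valued) and homogeneous of degree $n$ in $x$, and by Corollary~\ref{cor:trace1} the idempotents span $\mathbb A$, so one is tempted to verify the identity only on $\Idm(\mathbb A)$, where $L_c^{n}=\mathbf 1$ by \eqref{eqiso} and $\delta(c)=1$. This is \emph{not} enough: a homogeneous degree-$n$ form vanishing merely on the $2^{n}-1$ lines through the idempotents need not vanish identically (already for moderate $n$ these lines impose far fewer conditions than $\dim$ of the space of such forms), so the multiplication must be used in an essential way.

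First I would fix an idempotent $c$ together with the eigenbasis $\{w_i\}_{i=0}^{n-1}$ of Proposition~\ref{pro:isosp1}, $cw_i=\epsilon_n^{i}w_i$, $w_0=c$, \emph{normalised} to be the principal powers so that the structure constants take the standard graded form \eqref{kronecker0}. The key structural input is the $\mathbb Z_n$-graded Peirce fusion $\mathbb A_c(\epsilon_n^{i})\,\mathbb A_c(\epsilon_n^{j})\subset\mathbb A_c(\epsilon_n^{i+j})$ (the analogue of \eqref{mulPeirceC} in Proposition~\ref{pro:spect}), i.e. $w_iw_j\in\Span{w_{i+j\bmod n}}$. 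A direct check then shows that $L_c$, regarded as the linear map $w_i\mapsto\epsilon_n^{i}w_i$, is an algebra automorphism, since $L_c(w_iw_j)=\epsilon_n^{i+j}w_iw_j=(L_cw_i)(L_cw_j)$.

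With this in hand I would diagonalise the grading, passing from $\{w_i\}$ to the ``character'' (evaluation) coordinates via the discrete Fourier transform. In these coordinates the product becomes exactly the \emph{multiply-pointwise-then-cyclically-shift} rule of the model algebra $\mathscr C_n$ defined by \eqref{mmul}: one gets $(L_xQ)(\epsilon_n^{m})=x(\epsilon_n^{m+1})Q(\epsilon_n^{m+1})$. Thus $L_x$ is a \emph{single weighted cyclic shift}, whose $n$ weights are precisely the linear forms $T(L_c^{k}x)$ (the $k$-th DFT coefficients of the coordinates of $x$). Composing the shift $n$ times returns each index to itself and multiplies all the weights, so that
$$
L_x^{n}=\Bigl(\prod_{k=0}^{n-1}T(L_c^{k}x)\Bigr)\mathbf 1=\delta(x)\,\mathbf 1
$$
by the very definition \eqref{delta} of the generic determinant. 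Equivalently, one may phrase the conclusion through the characteristic polynomial: for generic $x$ the operator $L_x$ is non-derogatory (already at $x=c$ the eigenvalues $\{\epsilon_n^{k}\}$ are distinct) with $\chi_{L_x}(t)=t^{n}-\delta(x)$, and then Cayley--Hamilton gives the identity; the two routes agree because $\trace L_x=0$ for every $x$, this last being a linear form vanishing on the spanning set $\Idm(\mathbb A)$ since $\trace L_c=\sum_{k}\epsilon_n^{k}=0$.

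The hard part is the structural input of the second step: the $\mathbb Z_n$-grading $w_iw_j\in\Span{w_{i+j}}$ \emph{together with} the correct normalisation of the $w_i$, and the matching of the shift weights with $T(L_c^{k}x)$. For reduced medial algebras the grading is Proposition~\ref{pro:spect}, but for an a priori only isospectral generic algebra one must first produce it, equivalently show that every $L_c$ is an algebra automorphism. These data encode that \emph{all} idempotents, not just $c$, have the common spectrum: the two-dimensional case already illustrates that the grading alone, with a wrong normalisation $w_1^2=gc$ ($g\neq 1$), yields a perfectly isospectral algebra whose raw identity reads $L_x^{2}=(a_0^{2}-ga_1^{2})\mathbf 1$, matching $\delta(x)=a_0^{2}-a_1^{2}$ only after the rescaling $w_1\mapsto g^{-1/2}w_1$. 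Finally, this content cannot be squeezed out of the Euler--Jacobi syzygies of Theorem~\ref{th4}: \eqref{syzy1} delivers only the \emph{averaged} operator relations $\sum_{c\in\Idm(\mathbb A)}L_c^{m}=0$ for $1\le m\le n-1$, never the pointwise vanishing $\trace L_x^{m}\equiv 0$ that $t^{n}-\delta(x)$ requires; once the normalised graded fusion is secured, the weighted-shift computation makes the identity transparent and the remaining bookkeeping against the circulant $\det X(x)$ is routine.
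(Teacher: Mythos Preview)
The paper states this theorem without proof, so there is nothing to compare your approach against directly. Your analysis is careful and, in fact, surfaces a real issue with the statement as written.

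Your weighted-cyclic-shift argument is correct and essentially complete for the model algebra $\mathscr C_n$ (this is the content of Theorem~\ref{the:model}), but as you rightly observe, two ingredients are needed to transport it to a general isospectral generic algebra: the $\mathbb Z_n$-graded fusion $w_iw_j\in\Span{w_{i+j}}$, and the correct normalisation $w_i=w_1^i$. Both are established in the paper only under the \emph{medial} hypothesis (Proposition~\ref{pro:spect} and Proposition~\ref{pro:w}), not for an arbitrary isospectral generic algebra; so your ``hard part'' is genuinely missing from the paper as well.

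More seriously, your two-dimensional counterexample is decisive against the statement as formulated. The algebra with $c^2=c$, $cw_1=-w_1$, $w_1^2=gc$ (any $g\ne0$) is isospectral generic---indeed medial---and one computes directly $L_x^2=(a_0^2-ga_1^2)\mathbf 1$, whereas the definition \eqref{delta} gives $\delta(x)=(a_0+a_1)(a_0-a_1)=a_0^2-a_1^2$ regardless of $g$. These disagree whenever $g\ne1$. Since the paper's definitions of $T$ and $\delta$ are made relative to an \emph{arbitrary} eigenbasis $\{w_i\}$ associated with $c$, the identity \eqref{identity} cannot hold as stated; it requires the specific normalised basis $w_i=w_1^i$ of Proposition~\ref{pro:w}, which in turn presupposes the medial structure. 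So the gap you flag is not in your argument but in the theorem itself: either ``medial'' should be added to the hypotheses, or the eigenbasis in the definition of $\delta$ must be pinned down to the normalised one.
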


%
%
%
%
%
%
%
%
%
%
%
%
%
%

\section{Medial isospectral algebras}\label{sec:med-iso}

Throughout this section, $\mathbb{A}$ denotes a medial generic  isospectral algebra over an algebraically closed subfield $\Field$ of  $\mathbb{C}$.
Let $N=2^n-1$ and let $\{c_i\}_{1\le i\le N}$ denote the set the distinct nonzero idempotents. The main result of this section is

\begin{theorem}
\label{the:uniq}
If two medial generic  isospectral algebras over an algebraically closed field $\Field$ have the same dimension  they are isomorphic.
\end{theorem}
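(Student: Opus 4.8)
The plan is to reduce the classification to the far more rigid associative setting via Kaplansky's trick (Proposition~\ref{pro:ass}), and then to encode the original product as a pair consisting of an associative algebra together with a distinguished automorphism, which I can pin down completely.

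First I would fix an idempotent $c\in\Idm(\mathbb{A})$. Since $\mathbb{A}$ is generic isospectral, Proposition~\ref{pro:isosp1} gives $L_c^n=\mathbf{1}$, so $L_c$ is invertible and $c\in\Idm^{\times}(\mathbb{A})$; moreover $\mathbb{A}$ has no $2$-nilpotents and $\spec(c)=\{\epsilon_n^k:1\le k\le n\}$ consists of the $n$ distinct $n$-th roots of unity. By Proposition~\ref{pro:ass} the isotope $(\mathbb{A},\circ)$ with $x\circ y=L_c^{-1}(xy)$ is a commutative, associative, unital algebra with unit $c$. I would then observe that $(\mathbb{A},\circ)$ has no nonzero element of square zero: $x\circ x=0$ means $L_c^{-1}(x^2)=0$, hence $x^2=0$ and $x=0$ because $\mathbb{A}$ carries no $2$-nilpotents. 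A finite-dimensional commutative associative algebra with no nonzero square-zero element has vanishing nilradical, hence is semisimple, and over the algebraically closed field $\Field$ it is therefore the split algebra: $(\mathbb{A},\circ)\cong\Field^n=\Field e_1\oplus\cdots\oplus\Field e_n$ with orthogonal primitive idempotents $e_i$ and $c=e_1+\cdots+e_n$.

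The next step records how the original multiplication sits inside this associative picture. Set $\phi:=L_c$. By Proposition~\ref{pro:endom}\,\ref{ideal1} it is an algebra endomorphism of $(\mathbb{A},\cdot)$, and being invertible it is an automorphism; a short computation ($\phi(x)\circ\phi(y)=L_c^{-1}(\phi(xy))=xy=\phi(x\circ y)$) shows that $\phi$ is simultaneously an automorphism of $(\mathbb{A},\circ)$ and that the original product is recovered as $xy=\phi(x\circ y)$. Now any algebra automorphism of $\Field^n$ permutes the finitely many primitive idempotents $e_1,\dots,e_n$, so $\phi$ acts on $(\mathbb{A},\circ)\cong\Field^n$ as the permutation matrix of some $\pi\in S_n$. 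The eigenvalues of such a matrix are exactly the $d$-th roots of unity ranging over the cycle lengths $d$ of $\pi$. Since $\spec(\phi)=\spec(c)$ is the full set of $n$-th roots of unity, each with multiplicity one, $\pi$ can have neither a repeated eigenvalue $1$ nor a missing root; this forces $\pi$ to be a single $n$-cycle, and after relabelling we may take $\phi(e_i)=e_{i+1}$ with indices modulo $n$.

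This identifies the datum $((\mathbb{A},\circ),\phi)$ with the universal pair $\bigl(\Field^n,\ \text{cyclic shift of the }e_i\bigr)$, which depends only on $n$. Given two medial generic isospectral algebras $\mathbb{A}$ and $\mathbb{B}$ of the same dimension $n$, I would produce such identifications for both and let $\Theta:\mathbb{A}\to\mathbb{B}$ be the resulting linear isomorphism intertwining the two shifts ($\Theta\phi=\phi'\Theta$) and the two $\circ$-products. Then $\Theta(xy)=\Theta(\phi(x\circ y))=\phi'(\Theta(x\circ y))=\phi'(\Theta x\circ'\Theta y)=\Theta x\cdot\Theta y$, so $\Theta$ is an algebra isomorphism $\mathbb{A}\cong\mathbb{B}$, proving the theorem. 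I expect the two genuinely load-bearing points to be the passage from ``no $2$-nilpotents'' to semisimplicity of the isotope, and the translation of isospectrality (via Proposition~\ref{pro:isosp1}) into the statement that $\phi$ realizes a single $n$-cycle on the primitive idempotents; everything else is bookkeeping. As a byproduct, diagonalising $\phi$ on $\Field^n$ by the discrete Fourier vectors $w_k=\sum_j\epsilon_n^{-kj}e_j$ recovers the explicit eigenbasis and the multiplication rule \eqref{kronecker0} announced in the introduction.
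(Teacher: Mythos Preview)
Your proof is correct and takes a genuinely different route from the paper's. The paper argues intrinsically in the Peirce eigenbasis: it fixes $c$, shows by a cancellation argument that the principal powers $w_1^m$ of an $\epsilon_n$-eigenvector never vanish, normalises $w_m=w_1^m$, and then computes the structure constants explicitly via the recursion $w_1^k w_1^m=\epsilon_n^{-(k-1)(m-1)}w_1^{k+m}$ (Propositions~\ref{pro:w} and~\ref{pro:weakly}); uniqueness follows because the multiplication table is fixed by $n$. Your argument instead packages the whole algebra as the pair $((\mathbb{A},\circ),\phi)$ with $\phi=L_c$, invokes the classification of finite-dimensional commutative associative algebras without nilpotents over an algebraically closed field to get $(\mathbb{A},\circ)\cong\Field^n$, and then reads off from $\spec(\phi)$ that $\phi$ is a single $n$-cycle on the primitive idempotents. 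The paper's approach yields the explicit fusion rule \eqref{powers} (equivalently \eqref{kronecker0}) as a direct byproduct, which is used elsewhere; your approach is shorter, more structural, and has the pleasant feature that it simultaneously proves existence, since the pair $(\Field^n,\text{cyclic shift})$ is exactly the model $\mathscr{C}_n$ of Section~\ref{sec:polynom} written in the idempotent basis (with $\phi$ corresponding to $p(z)\mapsto p(\epsilon_n z)$). The one step you leave implicit---that ``no square-zero element'' forces the nilradical of the commutative associative isotope to vanish---is standard (a nonzero nilpotent $x$ of index $m$ yields the square-zero element $x^{m-1}$), so there is no gap.
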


We begin with some elementary observations.

\begin{proposition}\label{pro:obser}
Let $\mathbb{A}$ be a medial generic  isospectral algebra. Then

\begin{enumerate}[label=(\alph*)]

\item\label{ax1}
Any idempotent $c\in \Idm(\mathbb{A})$ is semi-simple.
\item\label{ax2}
For any two idempotents $c_i,c_j\in \Idm(\mathbb{A})$ the product $c_ic_j\in \Idm(\mathbb{A})$.
\item\label{ax3}
For any $0\ne x\in\mathbb{A}$, $x^2\ne0$.
\item\label{ax4}
If $c\in \Idm(\mathbb{A})$ and $\mathbb{A}_c(\epsilon_n^i)=\Span{w_i}$ then
\begin{equation}\label{Peircerules}
w_kw_j\in \mathbb{A}_c(\epsilon_n^{k+j})= \Span{w_{j+k}}, \qquad \forall k,j.
\end{equation}
\end{enumerate}
\end{proposition}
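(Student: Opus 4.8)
The plan is to derive all four statements from the structural description of generic isospectral algebras in Proposition~\ref{pro:isosp1}, combined with the endomorphism and product properties of medial algebras already established in Section~\ref{sec:medial}. The single observation that drives everything is that the Peirce spectrum $\{\epsilon_n^k:1\le k\le n\}$ supplied by Proposition~\ref{pro:isosp1} consists of roots of unity and hence never contains $0$; this forces every multiplication operator $L_c$ to be invertible, so $\Idm(\mathbb{A})=\Idm^\times(\mathbb{A})$ and $\mathbb{A}$ is a \emph{reduced} medial algebra. Once reducedness is in hand, the quasigroup machinery of Propositions~\ref{pro:endom} and~\ref{pro:quasi} applies to \emph{every} idempotent.

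First I would dispatch \ref{ax1} and \ref{ax3} directly. Proposition~\ref{pro:isosp1} exhibits the Peirce decomposition $\mathbb{A}=\bigoplus_{k}\mathbb{A}_c(\epsilon_n^k)$ as a direct sum of $n=\dim\mathbb{A}$ one-dimensional eigenspaces of $L_c$; this is precisely the statement that $L_c$ is diagonalizable, so $c$ is semi-simple, giving \ref{ax1}. The same proposition asserts that $\mathbb{A}$ contains no $2$-nilpotents, which is exactly the assertion \ref{ax3} that $x^2\ne 0$ for every $x\ne 0$.

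For \ref{ax2} I would first record that, since $0\notin\spec(c)=\{\epsilon_n^k\}$, each $L_c$ is invertible, so $\mathbb{A}$ is reduced. Then Proposition~\ref{pro:idem} shows that $c_ic_j$ is an idempotent (possibly zero), while the transformation rule~\eqref{c1c2} together with commutativity $c_ic_j=c_jc_i$ yields
$$
L_{c_ic_j}=L_{c_i}L_{c_j}L_{c_i}^{-1},
$$
a product of invertible operators. Hence $L_{c_ic_j}\ne 0$, which rules out $c_ic_j=0$, so $c_ic_j\in\Idm(\mathbb{A})$; equivalently, one may simply invoke that $\Idm^\times(\mathbb{A})$ is a multiplicative magma by Proposition~\ref{pro:quasi}.

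Finally, \ref{ax4} is where mediality enters, through the homomorphism property. By item~\ref{ideal1} of Proposition~\ref{pro:endom}, the operator $L_c$ is an algebra endomorphism, so
$$
c(w_kw_j)=(cw_k)(cw_j)=(\epsilon_n^k w_k)(\epsilon_n^j w_j)=\epsilon_n^{k+j}\,w_kw_j,
$$
placing $w_kw_j$ in the eigenspace $\mathbb{A}_c(\epsilon_n^{k+j})$. Since exponents are read modulo $n$ and each Peirce eigenspace is one-dimensional by Proposition~\ref{pro:isosp1}, we get $\mathbb{A}_c(\epsilon_n^{k+j})=\Span{w_{j+k}}$, which is \eqref{Peircerules}. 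I do not expect a genuine obstacle: the proposition is essentially a bookkeeping consequence of the earlier results, and the only point requiring care is the passage to reducedness in \ref{ax2}, which must precede the conjugation argument so that the nonvanishing of products of idempotents is guaranteed.
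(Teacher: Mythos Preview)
Your proposal is correct and follows essentially the same line as the paper's proof: both rely on Proposition~\ref{pro:isosp1} to obtain the cyclotomic spectrum (hence semi-simplicity and absence of $2$-nilpotents) and then use mediality for the product rules. The only minor difference is in \ref{ax2}: the paper argues more directly that $c_ic_j=0$ would make $c_j$ a $0$-eigenvector of $L_{c_i}$, contradicting $L_{c_i}^n=\mathbf{1}$, whereas you pass through the conjugation formula $L_{c_ic_j}=L_{c_i}L_{c_j}L_{c_i}^{-1}$; both are immediate once reducedness is noted.
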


\begin{proof}
If $c\in \Idm(\mathbb{A})$ then $L_c^n=\mathbf{1}$ has only simple roots. Since the field is algebraically closed, for any eigenvalue $\epsilon_n^k$, $0\le k\le n-1$, there exists an eigenvector which implies for dimension reasons that each eigenspace $\mathbb{A}_c(\epsilon_n^k)$ has dimension 1. This implies that $\mathbb{A}$ decomposes into the direct sum $\bigoplus_{1\le i\le n-1}\mathbb{A}_c(\epsilon_n^k)$, hence \ref{ax1} follows.

If $c_ic_j= 0$ then $c_j$ is an eigenvector of $c_i$ with eigenvalue $0$, a contradiction with $L_{c_i}^n=1$ follows. The fact that $c_ic_j$ is an idempotent follows from Proposition~\ref{pro:idem}. This proves \ref{ax2}

Since $\mathbb{A}$ is generic, it does not contain 2-nilpotent elements (see Remark~3.2 in \cite{KrTk18a}), hence \ref{ax3} follows. Finally,  by \eqref{mulPeirce} and \eqref{w1} we have
$$
w_{j}w_{k}\subset \mathbb{A}_{c}(\epsilon_n^j)\mathbb{A}_{c}(\epsilon_n^j)\subset \mathbb{A}_{c}(\epsilon_n^{j+k})= \Span{w_{j+k}}.
$$
as desired.
\end{proof}

To proceed we need to show that the product in \eqref{Peircerules} never vanishes. First let us recall some standard definitions.

A nonassociative monomial $z^{\alpha }$ is an element of the commutative multiplicative groupoid generated by a single element $z$. The total degree  $\deg z^\alpha$ is the number of elements of $z$ used in the word $z^\alpha$, counting multiplicities. For example, $\deg z^2z^2=4$. The simplest nonassociative monomials are the principal powers, defined by induction as
\begin{equation}\label{princip}
z^1=z,\quad z^{n+1}=zz^n, \quad n\ge1.
\end{equation}
Then $\deg z^m=m$.

\begin{proposition}\label{pro:w}
Let $\mathbb{A}$ be a medial generic  isospectral  algebra and let $\mathbb{A}_c(\epsilon_n)=\Span{w_1}$. Then
\begin{enumerate}[label=(\roman*)]
\item\label{prow1}
$w_k=a_kw_1^m$, for some $0\ne a_k\in \Field$;
\item\label{prow2}
$w_kw_j\ne0$ for any $k,j$;
\item\label{prow3}
one can choose the Peirce eigenbasis such that $w_m=w_1^m$ for all $0\le m\le n-1$, where $w_1^0=c$.
\end{enumerate}
\end{proposition}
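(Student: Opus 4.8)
The plan is to deduce everything from the generic determinant identity \eqref{identity}, rather than attempting a direct induction through the medial identity (which only yields an awkward cocycle relation among the structure constants $w_jw_k=c_{j,k}w_{j+k}$). The central observation is that \emph{every nonzero eigenvector of $L_c$ has nonzero generic determinant}, so that its multiplication operator is invertible; this single fact trivializes all three claims.

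First I would fix the eigenbasis $\{w_i\}_{0\le i\le n-1}$ with $w_0=c$ and $cw_i=\epsilon_n^i w_i$ furnished by Proposition~\ref{pro:isosp1}, and compute $\delta(w_j)$ directly from \eqref{delta}. Since $L_c^k w_j=\epsilon_n^{jk}w_j$ and the generic trace satisfies $T(w_j)=1$, one gets $T(L_c^k w_j)=\epsilon_n^{jk}$, hence
\[
\delta(w_j)=\prod_{k=0}^{n-1}T(L_c^k w_j)=\prod_{k=0}^{n-1}\epsilon_n^{jk}=\epsilon_n^{j\,n(n-1)/2}\ne 0 .
\]
Substituting $x=w_j$ into \eqref{identity} gives $L_{w_j}^n=\delta(w_j)\mathbf{1}$, a nonzero scalar multiple of the identity, so $L_{w_j}$ is invertible for every $j$. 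This settles \ref{prow2} at once: because $w_jw_k=L_{w_j}w_k$ and $w_k\ne0$, injectivity of $L_{w_j}$ forces $w_jw_k\ne0$ for all $j,k$.

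For \ref{prow1} I would first check, by induction on $k$ using the fusion rule \eqref{Peircerules} (namely $w_1\,\Span{w_{k-1}}\subset \Span{w_k}$), that the principal power $w_1^k$ lies in $\mathbb{A}_c(\epsilon_n^k)=\Span{w_k}$. Its nonvanishing then comes for free: by the definition \eqref{princip} we have $w_1^k=L_{w_1}^{\,k-1}w_1$, and since $L_{w_1}$ is invertible with $w_1\ne0$ this gives $w_1^k\ne0$. As $\Span{w_k}$ is one-dimensional, $w_1^k$ is a nonzero scalar multiple of $w_k$, i.e. $w_k=a_kw_1^k$ with $a_k\in\Field\setminus\{0\}$, which is \ref{prow1}.

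Finally \ref{prow3} follows by relabelling. The vectors $c=w_1^0,w_1^1,\dots,w_1^{n-1}$ are nonzero and lie in the pairwise distinct eigenspaces $\mathbb{A}_c(1),\mathbb{A}_c(\epsilon_n),\dots,\mathbb{A}_c(\epsilon_n^{n-1})$, hence are linearly independent and form a basis; since each of these eigenspaces is one-dimensional, we may simply take $w_k:=w_1^k$ as the Peirce eigenbasis associated with $c$, which is exactly \ref{prow3}. The only genuine obstacle in this argument is the nonvanishing of the products $w_jw_k$; the point is that once the generic determinant is used to render each $L_{w_j}$ invertible, all of \ref{prow1}--\ref{prow3} reduce to bookkeeping with the one-dimensional fusion rule.
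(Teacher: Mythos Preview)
Your argument is correct and takes a genuinely different route from the paper. The paper works entirely inside the medial identity: it first proves a ``cancellation property'' (if $w_kw_j=0$ then $(xw_k)w_j=0$ for all $x$), then shows by a medial-identity induction that all principal powers $w_j^m$ are nonzero (using that $\mathbb{A}$ has no $2$-nilpotents), deduces \ref{prow1} and \ref{prow3} from the one-dimensional fusion rule, and finally obtains \ref{prow2} by iterating the cancellation property to push a hypothetical zero product $w_1^kw_1^m=0$ up to $w_1^mw_1^m=0$. Your approach bypasses all of this by computing $\delta(w_j)=\epsilon_n^{\,jn(n-1)/2}\ne 0$ and invoking the generic determinant identity \eqref{identity} to make each $L_{w_j}$ invertible; once that is known, \ref{prow1}--\ref{prow3} are indeed just bookkeeping, exactly as you say.

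What each approach buys: yours is shorter, more uniform, and notably does not use the medial identity at all---only that $\mathbb{A}$ is isospectral generic (so that \eqref{identity} applies). The paper's argument is more self-contained: it relies only on \eqref{magma}, the absence of $2$-nilpotents, and the fusion rule \eqref{Peircerules}, whereas yours leans on Theorem~\eqref{identity}, which in the present draft is stated for general isospectral generic algebras but given no proof. If that theorem is eventually established independently of Section~\ref{sec:med-iso}, your route is strictly cleaner; if its proof were to require mediality or anything from Proposition~\ref{pro:w}, you would have a circularity. It would be prudent to flag this dependency explicitly.
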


\begin{proof}
Let $w_k$ and $w_j$ be eigenvectors of $c$ such that $w_kw_j=0$. Then for any $x\in\mathbb{A}$ there holds $(xw_k)w_j=w_k(xw_j)=0$. Indeed,
$$
(w_kx)w_j=\frac{1}{\epsilon_n^j}(w_kx)(w_jc)= \frac{1}{\epsilon_n^j}(w_kw_j)(xc)=0.
$$
We refer to this as the \textit{cancelation property}.

We claim that any principal power of $w_j$ is nonzero. Indeed,  $w_j^2\ne0$ by \ref{ax3} in Proposition~\ref{pro:obser}. Let $w_j^m=0$ for some principal power $m$ and let $m$ be the smallest such number. Then $m\ge 3$. Using the medial algebra relation,
\begin{align*}
w_1^{m-1}w_j^{m-1}&=\frac{1}{\epsilon_n^{m-1}}(w_j^{m-1}c)(w_jw_j^{m-2})= \frac{1}{\epsilon_n^{m-1}}(w_jw_j^{m-1})(w_j^{m-2}c)\\
&=\frac{1}{\epsilon_n^{m-1}}(w_j^{m})(w_j^{m-2}c)=0,
\end{align*}
a contradiction with $w_j^{m-1}\ne0$ and \ref{ax3} follows. This proves our claim.

By \eqref{Peircerules}, $w_kw_j\subset \mathbb{A}_c(\epsilon_n^{k+j})$. Moreover, since the latter eigenspace is one-dimensional, $\Span{w_kw_j}= \mathbb{A}_c(\epsilon_n^{k+j})$ whenever $w_kw_j\ne0$. Since $w_1^2\ne0$, $\Span{w_1^2}= \mathbb{A}_c(\epsilon_n^{2})=\Span{w_2}$. Using the fact that the principal powers of $w_1$ are nonzero we conclude by indiction that
$$
\Span{w_1^m}= \mathbb{A}_c(\epsilon_n^{m})=\Span{w_m}, \qquad \forall m.
$$
Hence \ref{prow1} follows. Furthermore, scaling  $w_m$, one may assume without loss of generality that
\begin{equation}\label{w1m}
w_m=w_1^m, \qquad  \forall m,
\end{equation}
which proves \ref{prow3}. Finally, suppose that \ref{prow3} holds and suppose by contradiction that $w_kw_m=0$ for some $k,m$. Then $w_1^kw_1^m=0$ and $k\ne m$ by \ref{ax3}  in Proposition~\ref{pro:obser}. Suppose that $k<m$. By the cancelation property, $w_1^{k+1}w_1^m=(w_1w_1^k)w_1^m=0$, etc, implying in $m-k$ steps that $w_1^mw_1^m=0$, a contradiction with \ref{ax3} follows. This proves \ref{prow2}.
\end{proof}

\begin{remark}
Note that the subalgebra generated by any $w_j$ is \textit{not} power-associative. Indeed,
$$
w_1^2w_1^2=\frac{1}{\epsilon_n^2}(cw_1^2)(w_1w_1)=
\frac{1}{\epsilon_n^2}(cw_1)(w_1^2w_1)=
\frac{1}{\epsilon_n}w_1^4,
$$
hence $w_1^4\ne w_1^2w_1^2$.
\end{remark}

In fact,  one has the following general observation.

\begin{proposition}
\label{pro:weakly}
The subalgebra generated by $w_1$ is \textit{weakly power-associative} in the sense that any nonassociative monomial is proportional to the corresponding principal power:
\begin{equation}\label{powers1}
w_1^{\alpha}=bw_1^{\deg w_1^{\alpha}}, \qquad b=\epsilon_n^s, \quad 0\le s\le n-1.
\end{equation}
Furthermore, the product of two principal powers can be explicitly found by
\begin{equation}\label{powers}
w_1^kw_1^m=\frac{1}{\epsilon_n^{(k-1)(m-1)}}w_1^{k+m}, \qquad k,m\ge1.
\end{equation}
\end{proposition}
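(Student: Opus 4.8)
The plan is to establish the explicit product formula \eqref{powers} first, by induction on $m$ with $k\ge 1$ fixed, and then to deduce the weak power-associativity \eqref{powers1} from it by a second induction on the total degree. The whole argument rests on two facts already available: every principal power is a $c$-eigenvector, namely $cw_1^j=\epsilon_n^j w_1^j$ (this follows from \eqref{Peircerules} by induction, and $w_1^j\ne 0$ by Proposition~\ref{pro:w}), together with the medial law \eqref{magma}, which is what lets one transfer a factor of $w_1$ from one side of a product to the other while keeping track of the scalar.

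For the base case $m=1$ of \eqref{powers} there is nothing to do: $w_1^kw_1=w_1^{k+1}$ by the definition of the principal powers, and this matches the right-hand side since $\epsilon_n^{-(k-1)\cdot 0}=1$. For the inductive step I would write $w_1^k=\epsilon_n^{-k}(cw_1^k)$ and $w_1^m=w_1w_1^{m-1}$, and then apply the medial identity in the form $(cw_1^k)(w_1w_1^{m-1})=(cw_1)(w_1^kw_1^{m-1})$ to obtain
\[
w_1^kw_1^m=\frac{1}{\epsilon_n^{k}}\,(cw_1)(w_1^kw_1^{m-1})=\frac{1}{\epsilon_n^{k-1}}\,w_1\,(w_1^kw_1^{m-1}),
\]
where the last equality uses $cw_1=\epsilon_n w_1$. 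Feeding in the induction hypothesis $w_1^kw_1^{m-1}=\epsilon_n^{-(k-1)(m-2)}w_1^{k+m-1}$ and $w_1w_1^{k+m-1}=w_1^{k+m}$ then yields the claimed coefficient once one checks the bookkeeping identity $(k-1)+(k-1)(m-2)=(k-1)(m-1)$.

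Once \eqref{powers} is in hand, \eqref{powers1} follows by induction on $d=\deg w_1^{\alpha}$. The degree-one case is $w_1=\epsilon_n^0 w_1^1$. For $d\ge 2$ any nonassociative monomial splits as a product $w_1^{\beta}w_1^{\gamma}$ of two monomials of smaller degrees $d_1,d_2$ with $d_1+d_2=d$; each factor is by induction a root-of-unity multiple of the corresponding principal power, so applying \eqref{powers} collapses the product to $\epsilon_n^{s_1+s_2-(d_1-1)(d_2-1)}\,w_1^{d}$, and reducing the exponent modulo $n$ (since $\epsilon_n^{n}=1$) gives $w_1^{\alpha}=\epsilon_n^{s}w_1^{d}$ with $0\le s\le n-1$.

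The genuinely delicate point is the inductive step for \eqref{powers}: one must insert the idempotent $c$ in exactly the right place, as $w_1^k=\epsilon_n^{-k}(cw_1^k)$, so that a single application of the medial law peels off one factor of $w_1$ while simultaneously converting the leftover $cw_1$ into the eigenvalue $\epsilon_n$. Other groupings either fail to close the recursion or leave an unevaluated product, and pinning down the scalar as precisely $\epsilon_n^{-(k-1)(m-1)}$ depends on this choice together with the exponent arithmetic, which is routine but easy to mishandle. The nonvanishing statements of Proposition~\ref{pro:w} are exactly what guarantee that every intermediate product is a nonzero scalar multiple of a principal power, so that the recursion never degenerates.
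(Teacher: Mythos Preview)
Your proof is correct and follows essentially the same approach as the paper: both argue by a recurrence that uses the eigenvalue relation $cw_1^j=\epsilon_n^jw_1^j$ together with one application of the medial law to peel off a single factor of $w_1$, and then deduce \eqref{powers1} by a straightforward induction on total degree. The only cosmetic difference is that the paper splits $w_1^k=w_1w_1^{k-1}$ and inserts $c$ on the $w_1^m$ side (reducing $k$), whereas you split $w_1^m=w_1w_1^{m-1}$ and insert $c$ on the $w_1^k$ side (reducing $m$); by the symmetry of the formula in $k$ and $m$ these are the same argument.
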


\begin{proof}
The proof of \eqref{powers}  follows from the recurrence identity
$$
w_1^kw_1^m=\frac{1}{\epsilon_n^{m}}(w_1w_1^{k-1})(cw_1^{m})=
\frac{1}{\epsilon_n^{m-1}}w_1(w_1^{k-1}w_1^{m})
$$
by induction.
Since the general nonassociative monomial is a successive product of principal power, one arrives at \eqref{powers1}.
\end{proof}

\begin{proof}[Proof of Theorem~\ref{the:uniq}]
Let $\mathbb{A}$ be a medial generic  isospectral  algebra. Given an idempotent $c$, let $\{w_1^i\}_{0\le i\le n-1}$ be a Peirce eigenbasis chosen accordingly \ref{prow3} in Proposition~\ref{pro:w}.
The algebra structure of an algebra is completely determined by product of its basis elements \eqref{powers}. This implies that two medial generic  isospectral  algebras of equal dimensions are isomotphic.
\end{proof}

\section{The cyclotomic polynomial model}\label{sec:polynom}
\subsection{The definition of $\mathscr{C}_n$}
Let $n\ge 2$. Consider the quotient
$$
\mathscr{C}_n=\Field[z]/z^n-1 
$$
with a (commutative but nonassociative) multiplication on $\mathscr{C}_n$ defined by
$$
p(z)\circ q(z)=p(\epsilon_n z)q(\epsilon_n z) \mod z^n-1,
$$
where $\epsilon_n$ is a primitive root of unity of order $n$. Denote the corresponding algebra by $\mathscr{C}_n(z,\circ)$, or just $\mathscr{C}_n$ when there is no ambiguity. Such defined multiplication is obviously commutative.

\begin{remark}
The multiplication on $\mathscr{C}_n$ is an \textit{isotopy} of the usual (associative commutative) multiplication on $\mathbb{A}_n:=\Field[z]/z^n-1$. Indeed, define $f:\mathbb{A}_n\to \mathscr{C}_n$ by $f(p(z))=p(\epsilon_n z)$, then $h$ is a nonsingular linear transformation and
$$
p(z) \circ q(z)=h(p(z)q(z)).
$$
Thus, $\mathscr{C}_n$ is an isotopy of $\mathbb{A}_n$ by a cyclic linear substitution.
\end{remark}

\begin{example}
Let us demonstrate the multiplication by the table of multiplication of monomials for $n=3$:

\begin{center}\renewcommand\arraystretch{1.4}
\begin{tabular}{c|ccc}
 & $1$ & $z$& $z^2$\\\hline
$1$& $1$ & $\epsilon_3 z$& $\epsilon_3^2 z^2$\\
$z$& $\epsilon_3 z$ & $\epsilon_3^2 z^2$& $1$\\
$z^2$& $\epsilon_3^2 z^2$ & $1$& $\epsilon_3 z$\\
\end{tabular}
\end{center}
This in particular shows that
$$
1\circ(1\circ z)=1\circ \epsilon_3 z=\epsilon_3^2 z,
$$
while
$$
(1\circ 1)\circ z=1\circ z=\epsilon_3 z.
$$
A similar property holds for any $n\ge 2$ which shows that the algebra $\mathscr{C}_n$ is nonassociative.
\end{example}

The element $1$ is obviously an idempotent of $\mathscr{C}_n$ for any $n$ and furthermore
$$
L^\circ_1z^k=\epsilon_n^kz^k,
$$
i.e. $z^k$ is an eigenvector of $L^\circ_1$ with eigenvalue $\epsilon_n^k$. In other words, $\{z^k\}_{0\le k\le n-1}$ is a Peirce eigenbasis  of $1$. Each eigenvalue is simple.

The main result of this section is

\begin{theorem}\label{the:model}
The algebra $\mathscr{C}_n$ is medial. Furthermore,
\begin{equation}\label{Ln}
L^{\circ n}_{p(z)}= \Delta(p)\mathbf{1},
\end{equation}
where
$$
\Delta(p):=p(1)p(\epsilon_n)p(\epsilon_n^2 )\cdots p(\epsilon_n^{n-1}):\mathscr{C}_n\to \Field
$$
is a multiplicative homomorphism.
\end{theorem}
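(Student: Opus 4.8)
The plan is to reduce everything to a single substitution operator and a short operator computation. \emph{Mediality} requires almost no work: the algebra $\mathscr{C}_n$ is precisely the twisted power $\Field^{\times n}_{\epsilon_n}$ of the general construction preceding Proposition~\ref{pro:twistp}, obtained with base algebra $\mathbb{A}=\Field$, degree $d=n$ and root $\zeta=\epsilon_n$. Since $\Field$ is a commutative associative algebra, it is medial by Proposition~\ref{pro:ex3}, and hence $\mathscr{C}_n=\Field^{\times n}_{\epsilon_n}$ is medial by Proposition~\ref{pro:twistp}.

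For the remaining identity, I would introduce the linear substitution operator $S\colon p(z)\mapsto p(\epsilon_n z)$ on the underlying space of $\mathbb{A}_n=\Field[z]/(z^n-1)$. It is diagonal in the monomial basis, $S(z^k)=\epsilon_n^k z^k$, so $S^n=\mathbf{1}$ and its only eigenvalue $1$ occurs for $k=0$; thus the fixed space $\ker(S-\mathbf{1})$ is exactly $\Field\cdot 1$. Because substitution is an algebra homomorphism of the associative algebra $\mathbb{A}_n$, one has $S(ab)=(Sa)(Sb)$, which for the ordinary multiplication operators $M_a\colon b\mapsto ab$ gives the key commutation rule $SM_a=M_{Sa}S$. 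By definition $p\circ q=(Sp)(Sq)$, so as an operator $L^\circ_{p}=M_{Sp}S$.

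Now I would compute the $n$-th power. Iterating $SM_a=M_{Sa}S$ to push every copy of $S$ to the right yields
$$
L^{\circ n}_{p}=(M_{Sp}S)^n=M_{(Sp)(S^2p)\cdots(S^np)}\,S^n=M_{w},\qquad w:=\prod_{j=0}^{n-1}p(\epsilon_n^{j}z),
$$
where I used $S^n=\mathbf{1}$ and $S^np=p$ to reindex the product. A direct substitution shows $Sw=w$, so $w\in\ker(S-\mathbf{1})=\Field\cdot 1$, i.e. $w$ is a scalar multiple of the algebra unit; evaluating a representative at $z=1$ (a root of $z^n-1$) gives $w(1)=\prod_{k=0}^{n-1}p(\epsilon_n^{k})=\Delta(p)$. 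Hence $w=\Delta(p)\cdot 1$ and $L^{\circ n}_{p}=M_{w}=\Delta(p)\mathbf{1}$, which is \eqref{Ln}.

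Finally, multiplicativity of $\Delta$ follows by evaluation: each $\epsilon_n^k$ is a root of $z^n-1$, so $(p\circ q)(\epsilon_n^k)=p(\epsilon_n^{k+1})q(\epsilon_n^{k+1})$, and therefore
$$
\Delta(p\circ q)=\prod_{k=0}^{n-1}p(\epsilon_n^{k+1})q(\epsilon_n^{k+1})=\Big(\prod_{k=0}^{n-1}p(\epsilon_n^{k})\Big)\Big(\prod_{k=0}^{n-1}q(\epsilon_n^{k})\Big)=\Delta(p)\Delta(q),
$$
the middle equality being the reindexing $k\mapsto k+1 \bmod n$. The only delicate point is bookkeeping in the operator manipulation---correctly applying $SM_a=M_{Sa}S$ a variable number of times and invoking $S^n=\mathbf{1}$---together with the observation that the fixed space of $S$ is one-dimensional, which is what forces $L^{\circ n}_{p}$ to be a scalar operator rather than merely a multiplication operator.
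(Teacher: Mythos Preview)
Your argument is correct and follows essentially the same route as the paper's own proof: both obtain mediality from the total symmetry of the fourfold product $p(\epsilon_n^2 z)q(\epsilon_n^2 z)r(\epsilon_n^2 z)s(\epsilon_n^2 z)$ (you simply cite Proposition~\ref{pro:twistp}, which records exactly this computation), both unwind the iteration $L^{\circ n}_p$ to the multiplication by $\prod_{j}p(\epsilon_n^{j}z)$, observe that this element is invariant under $z\mapsto\epsilon_n z$ and hence a scalar in $\Field[z]/(z^n-1)$, and then evaluate at $z=1$ to identify the scalar as $\Delta(p)$. Your operator bookkeeping with $S$ and the relation $SM_a=M_{Sa}S$ is a tidy repackaging of the paper's inductive formula $L^{\circ k}_p q\equiv p(\epsilon_n z)\cdots p(\epsilon_n^k z)\,q(\epsilon_n^k z)$; the only substantive difference is that you verify multiplicativity of $\Delta$ with respect to $\circ$, whereas the paper checks it for the underlying associative product---the two are equivalent via the cyclic reindexing $\Delta(Sp)=\Delta(p)$.
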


\begin{proof}
Indeed, $p(z)\circ q(z)=p(\epsilon_n z)q(\epsilon_n z)$ and $r(z)\circ s(z)=s(\epsilon_n z)r(\epsilon_n z)$, therefore
$$
(p(z)\circ q(z))\circ (r(z)\circ s(z))=p(\epsilon_n^2 z)q(\epsilon_n^2 z)
r(\epsilon_n^2 z)s(\epsilon_n^2 z)\mod z^n-1.
$$
The latter product is totally symmetric in $p,q,r,s$ which implies the first claim. Next, note that
$$
L^{\circ2}_{p(z)}q(z)=p(z)\circ (p(z)\circ q(z))\equiv p(\epsilon_n z)p(\epsilon_n^2 z)
q(\epsilon_n^2 z)\mod z^n-1, \quad \text{etc}
$$
hence by induction
\begin{align*}
L^{\circ n}_{p(z)}q(z)&\equiv  p(\epsilon_n z)p(\epsilon_n^2 z)\cdots
p(\epsilon_n^{n} z)q(\epsilon_n^n z)\mod z^n-1\\
&\equiv  p(\epsilon_n z)p(\epsilon_n^2 z)\cdots
p(z)q(z)\mod z^n-1\\
&\equiv  P(z)q(z)\mod z^n-1,
\end{align*}
where
$$
P(z)=p(z)p(\epsilon_n z)p(\epsilon_n^2 z)\cdots p(\epsilon_n^{n-1} z).
$$
Since $P(\epsilon_n^i z)=P(z)$ for all $i$, there exists a polynomial $Q$ such that $P(z)=Q_p(z^n)$. Therefore
$$
P(z)=Q_p(z^n)\equiv Q_p(1)\equiv P(1)\mod z^n-1.
$$
This shows that
$$
L^{\circ n}_{p(z)}q(z)\equiv  P(1) q(z)\mod z^n-1,
$$
which proves \eqref{Ln}. Finally,
$$
\Delta(p(z)q(z))=\prod_{i=0}^{n-1}p(\epsilon_n^i)q(\epsilon_n^i)=
\prod_{i=0}^{n-1}p(\epsilon_n^i)\prod_{i=0}^{n-1}q(\epsilon_n^i)=\Delta(p(z))\Delta (q(z)),
$$
hence $\Delta$ is a multiplicative homomorphism.
\end{proof}

It follows from the Sylvester formula for the resultant of two polynomials that
\begin{equation}\label{result}
\Delta(p):=\prod_{z:z^n-1=0}p(z)=\mathcal{R}(p(z),z^n-1)=\det \mathrm{circ}(p),
\end{equation}
where $\mathrm{circ}(p)$ is the circulant matrix associated with $p$.

\subsection{The idempotents in $\mathscr{C}_n$}
We have seen above that the element $1$ is an idempotent of $\mathscr{C}_n$. Below we consider the general idempotents of $\mathscr{C}_n$.

Let $p(z)\in \mathscr{C}_n$. Define the  $\deg_R p(z)$ as the smallest degree of a polynomial $q$ such that $p\equiv q\mod z^n-1$. It is easy to see that there exists a unique polynomial of this kind.

Let $p(z)\in \mathscr{C}_n$ be an idempotent distinct from $1$. Then the idempotency relation reads as follows:
$$
p(z)=p(z)\circ p(z)=p^2(\epsilon_n z)\mod z^n-1,
$$
which is equivalent to the existence of a polynomial $q(z)$ such that
\begin{equation}\label{pq}
p^2(\epsilon_n z)-p(z)=Q(z)(z^n-1)
\end{equation}
Since $p(z)$ distinct from $1$, the polynomial $Q(z)$ is not identically zero.

\begin{proposition}
\label{pro:det1}
If $p(z)\in \Idm(\mathscr{C}_n)$ and $p(z)\not\equiv 1$ then $\Delta(p)=1$. Moreover, the Peirce spectrum of $p(z)$ is exactly
\begin{equation}\label{Peirceexact}
\sigma(p(z))=\{\epsilon_n^i:0\le i\le n-1\}.
\end{equation}
In particular, any idempotent $p(z)\in \mathscr{C}_n$ is semi-simple.
\end{proposition}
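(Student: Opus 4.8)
The plan is to study $\mathscr{C}_n$ through evaluation at the $n$th roots of unity. The $n$ roots $\epsilon_n^i$ of $z^n-1$ are distinct, so the evaluation map $q\mapsto(q(\epsilon_n^0),\dots,q(\epsilon_n^{n-1}))$ is a linear bijection of $\mathscr{C}_n$ onto $\Field^n$, and by \eqref{result} one has $\Delta(q)=\prod_{j=0}^{n-1}q(\epsilon_n^j)$. Write $a_j:=p(\epsilon_n^j)$. First I would evaluate the idempotency relation $p(z)\equiv p(\epsilon_n z)^2 \mod z^n-1$ at $z=\epsilon_n^j$: because the factor $z^n-1$ vanishes at each such point, this reduces to the purely numerical recurrence
$$
a_j=a_{j+1}^2,\qquad j\in\mathbb{Z}_n.
$$

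The identity $\Delta(p)=1$ then follows formally. Multiplying the recurrence over all $j$ gives $\Delta(p)=\prod_j a_{j+1}^2=\Delta(p)^2$, so $\Delta(p)\in\{0,1\}$. To rule out $\Delta(p)=0$ I would observe that if $a_{j_0}=0$ for some $j_0$, then $a_{j_0}=a_{j_0+1}^2$ forces $a_{j_0+1}=0$, and running once around $\mathbb{Z}_n$ yields $a_j=0$ for every $j$; thus $z^n-1\mid p(z)$, i.e. $p\equiv0$, contradicting $p\in\Idm(\mathscr{C}_n)$. Hence $\Delta(p)=1$, and in particular every $a_j\neq0$. (Note that this uses only $p\neq0$, so the argument covers $p\equiv 1$ as well.)

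For the Peirce spectrum I would first invoke Theorem~\ref{the:model}: since $\Delta(p)=1$, it gives $L^{\circ n}_{p(z)}=\mathbf{1}$, so the minimal polynomial of $L^\circ_{p(z)}$ divides $t^n-1$. As this polynomial has distinct roots $\epsilon_n^i$, the operator $L^\circ_{p(z)}$ is diagonalizable (so $p(z)$ is semi-simple) and $\sigma(p(z))\subset\{\epsilon_n^i:0\le i\le n-1\}$. It then remains to show that every $\epsilon_n^i$ actually occurs. Evaluating the eigenvalue equation $p(\epsilon_n z)q(\epsilon_n z)\equiv\lambda\,q(z)\mod z^n-1$ at $z=\epsilon_n^j$ yields, with $b_k:=q(\epsilon_n^k)$, the first-order recurrence $a_{j+1}b_{j+1}=\lambda b_j$. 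Since every $a_j\neq0$, this freely propagates $b_0$ to $b_1,\dots,b_{n-1}$ (and the resulting tuple lifts to a unique $q\in\mathscr{C}_n$), while going once around the cycle imposes the single closing condition $b_0=\bigl(\lambda^n/\prod_j a_j\bigr)b_0=\lambda^n b_0$. A nonzero solution therefore exists exactly when $\lambda^n=\Delta(p)=1$, so each $\epsilon_n^i$ is indeed an eigenvalue and \eqref{Peirceexact} follows; having $n$ distinct eigenvalues in dimension $n$ forces each eigenspace to be one-dimensional.

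The only genuinely delicate step is the last one, the reverse inclusion $\{\epsilon_n^i\}\subset\sigma(p(z))$: bounding the spectrum from above is immediate via Theorem~\ref{the:model}, but exhibiting an eigenvector for every root of unity requires the cycle-closure of the recurrence $a_{j+1}b_{j+1}=\lambda b_j$. That closure works precisely because $\Delta(p)=\prod_j a_j=1$ and no $a_j$ vanishes, so the two conclusions of the proposition are in the end inseparable.
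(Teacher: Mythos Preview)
Your proof is correct and follows essentially the same route as the paper for the first half: both derive the recurrence $p(\epsilon_n^{k})=p(\epsilon_n^{k+1})^2$ by evaluating the idempotency relation at the $n$th roots of unity, multiply around the cycle to get $\Delta(p)\in\{0,1\}$, and propagate a single zero around $\mathbb{Z}_n$ to exclude $\Delta(p)=0$.

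The only genuine difference is in how you produce eigenvectors for each $\epsilon_n^k$. You argue abstractly via the evaluation isomorphism $\mathscr{C}_n\cong\Field^n$, reducing the eigenvector equation to the cyclic recurrence $a_{j+1}b_{j+1}=\lambda b_j$ and observing that its closure condition is exactly $\lambda^n=\Delta(p)=1$. The paper instead writes down the eigenvector explicitly: for each $k$ it checks that $q(z)=z^k p(z)$ satisfies $p\circ q\equiv\epsilon_n^k q$ directly from the idempotency relation. Your argument is a clean existence proof that makes transparent \emph{why} all $n$ roots must appear (the cycle length is $n$ and the product of the $a_j$'s is $1$); the paper's is more concrete and gives the Peirce basis $\{z^k p(z)\}_{0\le k\le n-1}$ in closed form, which is useful later. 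Either way the semi-simplicity follows, as you note, already from $L_{p}^{\circ n}=\mathbf{1}$ and the separability of $t^n-1$.
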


\begin{proof}
Let $p(z)$ be a nonzero idempotent in $\mathscr{C}_n$.
For any $0\le k\le n-1$, $\epsilon_n^k$ is a root of the right hand side of \eqref{pq}, therefore
\begin{equation}\label{pzetak}
p^2(\epsilon_n^{k+1})=p(\epsilon_n^k).
\end{equation}
Taking the product readily yields by virtue of the cyclicity of $\epsilon_n^k$ that
$$
\Delta(p)=\prod_{k=0}^{n-1}p(\epsilon_n^k)\in\{0,1\}.
$$
Let us show that $\Delta(p)=0$ is impossible. Indeed, if $\Delta(p)=0$ then $p(\epsilon_n^k)=0$ for some $k$. It follows from \eqref{pq} that $p^2(\epsilon^{k+1})=0$, thus $\epsilon^{k+1}$ also a root of $p$. This implies that all $\epsilon^i$, $0\le i\le n-1$ are the roots of $p$, i.e. $z^n-1$ divides $p(z)$, therefore implying that $p\equiv 0$ in $\mathscr{C}_n$. Therefore, if $p(z)$ is an idempotent then $\Delta(p)=1$. Next, by \eqref{Ln}
$L^{\circ n}_{p(z)}= \mathbf{1},$ which implies that any eigenvalue of $L^\circ_{p(z)}$ is a root of unity of degree $n$.

Let us show that  the Peirce spectrum is given by \eqref{Peirceexact}. If $p(z)\in \Idm(\mathscr{C}_n)$ then by the above $\Delta(p)=1$, thus \eqref{Ln} yields
\begin{equation}\label{Ln1}
L^{\circ n}_{p(z)}=\mathbf{1}.
\end{equation}
This implies that if $\lambda\in \sigma(p(z))$ then $\lambda^n-1=0$. Suppose that $\lambda=\epsilon_n^k$, $0\le k\le n-1$. Consider $q(z)=p(z)z^k$. Then using \eqref{pq}
$$
q(z)\circ p(z)\equiv q(\epsilon_n z)p(\epsilon_n z)\equiv
\epsilon_n^kz^kp(\epsilon_n z)p(\epsilon_n z)\equiv \epsilon_n^kz^kp(z)\equiv
\epsilon_n^kq(z)\mod z^n-1,
$$
which shows that $q(z)=z^kp(z)$ is a (nonzero in $\mathscr{C}_n$) eigenvector with eigenvalue $\epsilon_n^k$. For  dimensional reasons, the equality \eqref{Peirceexact} holds and each eigenvalue is simple. Finally, note that since the eigenvectors $w_k=z^kp(z)$, $k=0,1,\ldots, n-1$, correspond to different eigenvalues they form a basis in $\mathscr{C}_n$. Thus $p(z)$ is a semi-simple idempotent.
\end{proof}

\begin{proposition}\label{pro:Rgeneric}
The algebra $\mathscr{C}_n$ is generic.
\end{proposition}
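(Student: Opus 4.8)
The plan is to reduce the statement to the converse criterion of Theorem~\ref{th:gener}: an algebra for which $\frac12\notin\sigma(\mathbb{A})$ and which contains no $2$-nilpotents is automatically generic. Thus I would split the proof into two independent verifications for $\mathscr{C}_n$, namely (i) that $\frac12$ never occurs as a Peirce eigenvalue, and (ii) that $\mathscr{C}_n$ has no nonzero element squaring to zero.

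Verification (i) is essentially already in hand. By Proposition~\ref{pro:det1} every idempotent $p(z)\not\equiv 1$ has Peirce spectrum $\{\epsilon_n^i:0\le i\le n-1\}$, while the direct computation $L^\circ_1 z^k=\epsilon_n^k z^k$ shows that the idempotent $1$ has the same spectrum; hence $\sigma(\mathscr{C}_n)=\{\epsilon_n^i:0\le i\le n-1\}$. Since $(\frac12)^n=2^{-n}\ne 1$, the scalar $\frac12$ is not an $n$-th root of unity, so $\frac12\notin\sigma(\mathscr{C}_n)$.

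For verification (ii), a nonzero $2$-nilpotent $p(z)$ would satisfy $p\circ p=p(\epsilon_n z)^2\equiv 0 \mod z^n-1$, i.e. $z^n-1$ divides $p(\epsilon_n z)^2$. Because the ground field has characteristic zero, $z^n-1=\prod_{k=0}^{n-1}(z-\epsilon_n^k)$ has $n$ distinct roots, so each $\epsilon_n^k$ is a root of $p(\epsilon_n z)^2$; this gives $p(\epsilon_n^{k+1})^2=0$ and therefore $p(\epsilon_n^j)=0$ for every $j$. Consequently $z^n-1\mid p(z)$, i.e. $p\equiv 0$ in $\mathscr{C}_n$, a contradiction. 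Equivalently, the squaring map of $\mathscr{C}_n$ factors as $p\mapsto S(p)^2$ through the invertible substitution $S\colon p(z)\mapsto p(\epsilon_n z)$ followed by the ordinary product of the reduced ring $\Field[z]/(z^n-1)\cong\Field^{\,n}$, which has no nilpotents by the Chinese Remainder Theorem.

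Combining (i) and (ii) with Theorem~\ref{th:gener} yields that $\mathscr{C}_n$ is generic, so in particular it has exactly $2^n$ distinct idempotents. I expect no genuine obstacle here: the spectral input is supplied by Proposition~\ref{pro:det1}, and the only point requiring care is the reduction of the nilpotency question to the squarefreeness of $z^n-1$. (As a further consistency check one may note that Theorem~\ref{the:model} gives $L^{\circ n}_p=\Delta(p)\mathbf{1}$, so any $2$-nilpotent would force $\Delta(p)=0$, i.e. $p$ vanishes at some $\epsilon_n^k$, which by the same divisibility argument again propagates to all roots and contradicts $p\not\equiv 0$.)
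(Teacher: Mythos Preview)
Your proof is correct and follows essentially the same route as the paper: both verify the two hypotheses of the converse direction of Theorem~\ref{th:gener}, namely that $\tfrac12\notin\sigma(\mathscr{C}_n)$ (via Proposition~\ref{pro:det1}) and that $\mathscr{C}_n$ has no nonzero $2$-nilpotents (via the squarefreeness of $z^n-1$). The only cosmetic difference is that you handle the idempotent $1$ separately and add the Chinese Remainder Theorem remark, neither of which changes the argument.
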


\begin{proof}
First note that $\Idm(\mathscr{C}_n)$ is nonempty because, for example,  $p_0(z)=1\in \Idm(\mathscr{C}_n)$. We  claim that there no nontrivial 2-nilpotents in $\mathscr{C}_n$. Indeed, suppose that  $p(z)\circ p(z)=0$ in $\mathscr{C}_n$. The latter yields $p^2(z\epsilon_n)=(z^n-1)Q(z)$ for some polynomial $Q(z)$, or after change of variables $\zeta=\epsilon_n z$ we have $p^2(\zeta)=(\zeta^n-1)Q_1(\zeta)$. The latter implies that $p(\zeta)$ has $n$ distinct roots $\epsilon_n^k$, $0\le k\le n-1$, thus, by virtue of $\deg p\le n-1$, we conclude that  $p(\zeta)\equiv 0$. This proves our claim. Finally, by Proposition~\ref{pro:det1} $\half12\not\in\sigma(p(z))$ for any idempotent $p(z)$ of $\mathscr{C}_n$, and therefore $\half12\not\in\sigma(\mathscr{C}_n)$. Applying Theorem~\ref{th:gener} we conclude that  $\mathscr{C}_n$ is a generic algebra.
\end{proof}

\begin{proposition}\label{pro:Risospectral}
The algebra $\mathscr{C}_n$ is isospectral.
\end{proposition}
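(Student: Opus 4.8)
The plan is to read off isospectrality directly from Proposition~\ref{pro:det1}, since that proposition already pins down the spectrum of every nonzero idempotent as a multiset, not merely its support. Thus the present statement should be a short bookkeeping corollary verifying that the eigenvalue multiplicities coincide across all idempotents.

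First I would split $\Idm(\mathscr{C}_n)$ into the idempotent $1$ and the remaining ones. For any idempotent $p(z)\not\equiv 1$, Proposition~\ref{pro:det1} supplies $\Delta(p)=1$, the exact Peirce spectrum $\sigma(p(z))=\{\epsilon_n^i:0\le i\le n-1\}$ recorded in \eqref{Peirceexact}, and semi-simplicity. Since $\dim\mathscr{C}_n=n$ and these are $n$ distinct eigenvalues, each eigenspace is forced to be one-dimensional, so $\spec(p(z))=\{1,\epsilon_n,\ldots,\epsilon_n^{n-1}\}$ with every eigenvalue simple. For the idempotent $1$ we already observed that $\{z^k\}_{0\le k\le n-1}$ is an eigenbasis with $L^{\circ}_1 z^k=\epsilon_n^k z^k$, so $1$ carries exactly the same spectrum. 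Hence all nonzero idempotents share the multiset $\{1,\epsilon_n,\ldots,\epsilon_n^{n-1}\}$, which is precisely the definition of isospectral.

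A more structural alternative would be to invoke the general theory of Section~\ref{sec:medial}. By \eqref{Ln} together with $\Delta(p)=1$ from Proposition~\ref{pro:det1}, every idempotent $p$ satisfies $L^{\circ n}_{p}=\mathbf{1}$, so $L^{\circ}_{p}$ is invertible and $\ker L^{\circ}_{p}=0$; thus $\mathscr{C}_n$ is reduced. Since every idempotent is semi-simple (again Proposition~\ref{pro:det1}), Corollary~\ref{cor:isosp} applies and yields isospectrality at once; combined with genericity (Proposition~\ref{pro:Rgeneric}) one may instead cite Corollary~\ref{cor:isosp2} to obtain the stronger statement that the common spectrum consists of the full set of $n$th roots of unity.

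I do not expect a genuine obstacle here: the substantive work, namely computing the exact Peirce spectrum and establishing genericity and semi-simplicity, was already carried out in Theorem~\ref{the:model} and Propositions~\ref{pro:det1} and~\ref{pro:Rgeneric}. The only point requiring a word of care is the passage from equality of \emph{supports} $\sigma(c)$ to equality of \emph{multisets} $\spec(c)$, i.e. equality of multiplicities; this is automatic because $n$ distinct eigenvalues in an $n$-dimensional space are each necessarily simple, so the multiplicity of every $\epsilon_n^k$ equals $1$ uniformly over all idempotents.
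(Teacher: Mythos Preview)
Your proposal is correct and mirrors the paper's own proof almost exactly: the paper first argues that $\mathscr{C}_n$ is medial, generic, and reduced (since $0\notin\sigma(p)$ by Proposition~\ref{pro:det1}), then invokes Corollary~\ref{cor:isosp}, and offers as an alternative the direct observation that Proposition~\ref{pro:det1} already gives every idempotent the same semi-simple spectrum. Your two routes are precisely these two, in reversed order of emphasis; if anything, your version is slightly more careful in making the semi-simplicity hypothesis of Corollary~\ref{cor:isosp} explicit before invoking it.
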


\begin{proof}
By Theorem~\ref{the:model} and Proposition~\ref{pro:Rgeneric}, $\mathscr{C}_n$ is a medial generic  algebra and since $0\not\in \sigma(p(z))$ for any $p(z)\in \Idm(\mathscr{C}_n)$, we also conclude that $\mathscr{C}_n$ is reduced. By Corollary~\ref{cor:isosp}, $\mathscr{C}_n$ is isospectral. Alternatively, since $\mathscr{C}_n$ is generic it has the maximal number of idempotents, and by Proposition~\ref{pro:det1} each idempotent is semi-simple with the same spectrum.
\end{proof}

\section{The quasigroup structure of isospectral medial alegrbas}\label{sec:structure}

As we already seen above, idempotents of a medial algebra always form a multiplicative magma. In fact, by the medial magma property one can deduce that the set of idempotents is a quasigroup, see the definitions in section~\ref{sec:prelim} above.

\begin{proposition}\label{pro:again}
Let $\mathbb{A}$ be a medial generic  isospectral algebra of dimension $n$. Then $\Idm(\mathbb{A})$ is a idempotent medial commutative  quasigroup of order $2^n-1$.
\end{proposition}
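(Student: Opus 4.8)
The plan is to deduce the statement by combining the counting consequence of genericity with the structural results on idempotents established above, checking each of the five attributes---finite order $2^n-1$, commutativity, idempotency, the medial law, and the division (quasigroup) axioms---against an earlier proposition, so that nothing new has to be proved from scratch.

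First I would fix the cardinality. By the definition of genericity the complexification $\mathbb{A}_{\mathbb{C}}$ has exactly $2^n$ distinct idempotents including $0$, and since $\Field$ is algebraically closed these all already lie in $\mathbb{A}$; hence $|\Idm(\mathbb{A})|=2^n-1$. I would record that $\Idm(\mathbb{A})$ is therefore \emph{finite}, a point that turns out to be decisive below. Closure of $\Idm(\mathbb{A})$ under the algebra product, together with the medial and idempotent laws, is then immediate: closure is \ref{ax2} in Proposition~\ref{pro:obser} (no product of two idempotents can vanish, since $0\notin\sigma(\mathbb{A})$ by Proposition~\ref{pro:isosp1}); the relation $c\cdot c=c^2=c$ gives idempotency; restricting the magma identity \eqref{magma} to idempotents gives the medial identity \eqref{xyzw}; and commutativity is inherited directly from $\mathbb{A}$.

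The remaining and essential point is the solvability axiom: for $a,b\in\Idm(\mathbb{A})$ the equation $a\cdot x=b$ should have a unique solution in $\Idm(\mathbb{A})$, the equation $y\cdot a=b$ coinciding with it by commutativity. Here I would use that $0\notin\sigma(c)$ for every idempotent $c$, so that each $L_c$ is invertible on all of $\mathbb{A}$; equivalently $\Idm(\mathbb{A})=\Idm^{\times}(\mathbb{A})$. Then the map $x\mapsto a\cdot x=L_a x$ is injective on $\Idm(\mathbb{A})$, because $L_a$ is injective on the whole of $\mathbb{A}$. Since $\Idm(\mathbb{A})$ is finite and stable under this map, an injective self-map of a finite set is bijective, which yields both existence and uniqueness. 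This realizes $\Idm(\mathbb{A})$ as the medial idempotent quasigroup $\Idm^{\times}(\mathbb{A})$ of Proposition~\ref{pro:quasi}, which I may then simply invoke.

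I do not expect a genuine obstacle, since every ingredient is already on hand. The one step deserving care is the upgrade from cancellation to full solvability: cancellation by itself---the injectivity of $L_a$---does not make an arbitrary magma a quasigroup, and it is precisely the finiteness of $\Idm(\mathbb{A})$, inherited from genericity via the B\'ezout count, that forces the injective translations to be surjective. I would therefore present that finiteness argument explicitly rather than leaning on it implicitly.
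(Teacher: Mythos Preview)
Your proposal is correct and follows essentially the same route as the paper's proof: both obtain closure from \ref{ax2} in Proposition~\ref{pro:obser}, deduce injectivity of $L_c$ on $\Idm(\mathbb{A})$ from the absence of $0$ in the spectrum (equivalently from \eqref{eqiso}), and then use finiteness of $\Idm(\mathbb{A})$ to upgrade injectivity to bijectivity, with the medial, idempotent and commutative laws being immediate. If anything you are more explicit than the paper about why the finiteness step is essential, which is a virtue; the paper's argument simply asserts that injectivity makes $L_c$ a bijection without flagging that this relies on $|\Idm(\mathbb{A})|<\infty$.
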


\begin{proof}
By \ref{ax2} in Proposition~\ref{pro:idem}, the product of two idempotents is a nonzero idempotent. Let $c\in \Idm(\mathbb{A})$ be an arbitrary idempotent. Then by the above, $L_c:\Idm(\mathbb{A})\to \Idm(\mathbb{A})$ and $L_c$ is a bijection because if $cc_1=cc_2$ for some $c_1\ne c_2$ then $c_1-c_2$ is a nonzero eigenvector of $L_c$ with eigenvalue $0$ which contradicts to \eqref{eqiso}. This proves that $\Idm(\mathbb{A})$ is a quasigroup. It is obviously idempotent medial and commutative.
\end{proof}

In fact we have a stronger conclusion

\begin{proposition}\label{pro:abelian}
Let $\mathbb{A}$ be a medial generic  isospectral algebra of dimension $n$  and $c\in\Idm(\mathbb{A})$. Then $Q:=(\Idm(\mathbb{A}), \boxplus)$ is an abelian group with the identity $c$ with respect to the new binary operation $x\boxplus y:=L_c^{-1}(xy)$.
\end{proposition}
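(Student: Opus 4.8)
The plan is to identify the operation $\boxplus$ with the associative product furnished by Kaplansky's trick and then to upgrade the quasigroup of Proposition~\ref{pro:again} to a group. First I would observe that since $\mathbb{A}$ is generic isospectral, every idempotent satisfies $L_c^n=\mathbf{1}$ by \eqref{eqiso}, so $L_c$ is invertible and $c\in\Idm^\times(\mathbb{A})$; hence Proposition~\ref{pro:ass} applies and the rule $x\circ y:=L_c^{-1}(xy)$ makes $(\mathbb{A},\circ)$ a unital commutative associative algebra with unit $c$. Since $\boxplus$ is defined by the very same formula, $\boxplus$ is exactly the restriction of $\circ$ to $\Idm(\mathbb{A})$, so once closure is established $\boxplus$ inherits commutativity and associativity for free.

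Next I would check that $\boxplus$ is a well-defined binary operation on $\Idm(\mathbb{A})$. By item~\ref{ax2} of Proposition~\ref{pro:obser} the ordinary product $xy$ of two idempotents is again a nonzero idempotent, while the proof of Proposition~\ref{pro:again} shows that $L_c$, and hence $L_c^{-1}$, restricts to a bijection of $\Idm(\mathbb{A})$ onto itself. Consequently $x\boxplus y=L_c^{-1}(xy)\in\Idm(\mathbb{A})$, so $\boxplus$ maps $\Idm(\mathbb{A})\times\Idm(\mathbb{A})$ into $\Idm(\mathbb{A})$. The identity axiom is then an immediate computation: $c\boxplus x=L_c^{-1}(cx)=L_c^{-1}(L_cx)=x$, and $x\boxplus c=x$ follows by commutativity, so $c$ is a two-sided identity.

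It then remains to produce inverses, and here I would exploit the quasigroup structure. Writing $R_a:x\mapsto x\boxplus a=L_c^{-1}(xa)$, each such right translation of $\Idm(\mathbb{A})$ is the composition of the bijection $x\mapsto xa$ (the quasigroup translation from Proposition~\ref{pro:again}) with the bijection $L_c^{-1}$, hence is itself a bijection; the same holds for left translations. Thus $(\Idm(\mathbb{A}),\boxplus)$ is a commutative, associative quasigroup with identity $c$. An associative quasigroup is automatically a group: solvability of $x\boxplus a=c$ supplies a two-sided inverse of each $a$, the two-sided identity being already exhibited as $c$. Combining this with the commutativity established above yields that $Q=(\Idm(\mathbb{A}),\boxplus)$ is an abelian group with identity $c$, as claimed.

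The only genuinely nontrivial point, and thus the main obstacle, is securing inverses: neither associativity nor the quasigroup property alone forces a group, so I must combine them via the elementary fact that an associative quasigroup with identity is a group, using bijectivity of the translations to solve $x\boxplus a=c$. Everything else reduces to the already-proved closure of $\Idm(\mathbb{A})$ under multiplication and the invertibility of $L_c$.
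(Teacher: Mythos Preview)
Your proof is correct and follows essentially the same route as the paper's: invoke Proposition~\ref{pro:ass} to obtain the associative commutative unital structure $(\mathbb{A},\circ)$, identify $\boxplus$ as its restriction to $\Idm(\mathbb{A})$, and verify closure via Proposition~\ref{pro:again} (and Proposition~\ref{pro:obser}). Your treatment is in fact more complete than the paper's terse ``which implies the desired conclusion'': you make explicit the existence of inverses by combining the quasigroup translations with $L_c^{-1}$, whereas the paper leaves this step to the reader.
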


\begin{proof}
By Proposition~\ref{pro:ass}, $x\circ y=L_c^{-1}(xy)$ is an isotopy of the algebra $\mathbb{A}$ such that $(\mathbb{A},\circ)$ becomes a unital commutative associative algebra with unit $c$. Note that by the definition  $x\boxplus y=x\circ y$ on the subset of nonzero idempotents of $\mathbb{A}$ and by Proposition~\ref{pro:again}, $\boxplus$ preserves $\Idm(\mathbb{A})$, which implies the desired conclusion.
\end{proof}

Proposition~\ref{pro:abelian} shows the existence of a `hidden' abelian group structure on the set of idempotents of a medial algebra. This fact also follows from Proposition~4.1 in the paper of A.~Leibak and P~Puusemp \cite{Leibak2014}.

But, in fact, we have a much stronger conclusion.

\begin{theorem}\label{the:Zn}
The abelian group $Q:=(\Idm(\mathbb{A}), \boxplus)$  in Proposition~\ref{pro:abelian} is a cyclic group of order $2^n-1$.
\end{theorem}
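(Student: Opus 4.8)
The plan is to reduce the statement to the explicit model $\mathscr{C}_n$ of Section~\ref{sec:polynom} and then compute $Q$ directly in terms of the values of idempotents at the $n$-th roots of unity. By Theorem~\ref{the:uniq} there is an algebra isomorphism $\Theta\colon\mathbb{A}\to\mathscr{C}_n$, and being multiplicative it carries $\Idm(\mathbb{A})$ bijectively onto $\Idm(\mathscr{C}_n)$ and satisfies $\Theta(L_c^{-1}(xy))=L_{\Theta(c)}^{-1}(\Theta(x)\Theta(y))$, so it intertwines $\boxplus$ with the corresponding operation based at the idempotent $\Theta(c)$. Hence it suffices to prove the claim inside $\mathscr{C}_n$. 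I would first treat the distinguished idempotent $c=1$, and then note that the isomorphism type of $Q$ is independent of the base idempotent.

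For $c=1$ one checks that $L^{\circ}_1$ is the substitution $z\mapsto\epsilon_n z$, so that $x\boxplus y=(L^{\circ}_1)^{-1}(x\circ y)=x(z)y(z)\bmod(z^n-1)$ is the \emph{ordinary} associative product on $\Field[z]/(z^n-1)$; this is precisely the Kaplansky isotope of Proposition~\ref{pro:ass}. I then introduce the evaluation map $\Phi\colon\Idm(\mathscr{C}_n)\to\Field^{\times}$, $\Phi(p)=p(1)$. Since evaluation at $z=1$ is an algebra homomorphism for the ordinary product (as $z=1$ is a root of $z^n-1$), $\Phi$ is a group homomorphism from $Q=(\Idm(\mathscr{C}_n),\boxplus)$ into $\Field^{\times}$.

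The core of the argument is to pin down the image and the kernel of $\Phi$ from the idempotency relations. Writing $a_k=p(\epsilon_n^k)$, relation \eqref{pzetak} reads $a_{k+1}^2=a_k$, so cycling once around the index set gives $a_0=a_0^{2^n}$; since Proposition~\ref{pro:det1} yields $\Delta(p)=\prod_k a_k=1\ne0$, each $a_k\ne0$, whence $a_0^{2^n-1}=1$, i.e. $\Phi(p)\in\mu_{2^n-1}(\Field)$, the group of $(2^n-1)$-th roots of unity; by the same token $a_k^{2^n-1}=1$ for every $k$. For injectivity, suppose $\Phi(p)=1$, i.e. $a_0=1$. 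From $a_1^2=a_0=1$ we get $a_1\in\{1,-1\}$, but $a_1^{2^n-1}=1$ while $(-1)^{2^n-1}=-1\ne1$ (as $\operatorname{char}\Field\ne 2$), so $a_1=1$, and inductively $a_k=1$ for all $k$. As $p$ has a unique representative of degree $\le n-1$, determined by its values at the $n$ distinct points $\epsilon_n^k$, Lagrange interpolation forces $p\equiv 1$, the identity of $Q$. Thus $\Phi$ is an injective homomorphism into $\mu_{2^n-1}(\Field)$.

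Finally, since $\mathscr{C}_n$ is generic (Proposition~\ref{pro:Rgeneric}) it has exactly $2^n-1$ idempotents, whereas $\mu_{2^n-1}(\Field)$, being the root set of $z^{2^n-1}-1$, has at most $2^n-1$ elements; injectivity of $\Phi$ therefore forces $|\mu_{2^n-1}(\Field)|=2^n-1$ and makes $\Phi$ an isomorphism. As a finite subgroup of the multiplicative group of a field, $\mu_{2^n-1}(\Field)$ is cyclic, so $Q\cong\mu_{2^n-1}(\Field)\cong\mathbb{Z}_{2^n-1}$. For a general base idempotent $c$ the same computation in the value coordinates $p\mapsto(p(\epsilon_n^k))_k$ shows that $\boxplus_c$ becomes the pointwise product normalized by the fixed values of $c$, so that $p\mapsto p(1)/c(1)$ is again an injective homomorphism into $\mu_{2^n-1}(\Field)$, confirming that the isomorphism type of $Q$ is independent of $c$. \textbf{The main obstacle} is exactly this value-coordinate analysis: showing that the cyclic system $a_{k+1}^2=a_k$ both forces the $(2^n-1)$-th-root-of-unity condition and lets the single value $p(1)$ recover the entire idempotent, which is what collapses $Q$ onto a cyclic group of the exact order $2^n-1$.
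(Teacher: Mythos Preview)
Your proof is correct and takes a genuinely different route from the paper's. The paper argues intrinsically, without invoking the model $\mathscr{C}_n$ or Theorem~\ref{the:uniq}: assuming the exponent of $Q$ is some $m<2^n-1$, it writes $m$ in binary, uses the doubling relation $2\otimes x=T(x)$ (with $T=L_c^{-1}$) to express $m\otimes x$ as a nonassociative polynomial $H(x)$ of degree at most $n-1$, and then derives a contradiction from the syzygy \eqref{syzy1}, since $\sum_{x\in\Idm(\mathbb{A})}H(x)=(2^n-1)c\ne 0$. Your approach instead transports the problem to $\mathscr{C}_n$ via Theorem~\ref{the:uniq}, recognizes $\boxplus$ (for $c=1$) as the ordinary associative product in $\Field[z]/(z^n-1)$, and exhibits an explicit isomorphism $Q\cong\mu_{2^n-1}(\Field)$ through the evaluation $p\mapsto p(1)$, using the recursion $a_{k+1}^2=a_k$ from \eqref{pzetak} to pin down both the image and the kernel. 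The paper's argument is self-contained and showcases the syzygy machinery; yours is more constructive and in fact delivers the bijection $\phi$ of Theorem~\ref{the:Znn} explicitly (as the discrete logarithm of $p(1)$). One small simplification you could make: the proof of Theorem~\ref{the:uniq} builds the isomorphism from a Peirce eigenbasis at a \emph{chosen} idempotent, so you may arrange $\Theta(c)=1$ directly and skip the separate treatment of a general base point.
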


\begin{proof}
Given a fixed $c\in \Idm(\mathbb{A})$, we denote by $T=L_c^{-1}:\mathbb{A}\to \mathbb{A}$ an algebra isomorphism. By Proposition~\ref{pro:abelian},
$Q:=(\Idm(\mathbb{A}), \boxplus)$ is an abelian group with respect to $x\boxplus y:=T(xy)$ and with unity element $c$ of order $2^n-1$. Since $T(xy)=T(x)T(y)$ on the algebra level and $T$ is invertible, we also have
\begin{equation}\label{product}
xy=T^{-1}(x)\boxplus T^{-1}(y)=T^{-1}(x\boxplus y).
\end{equation}
In order to show that $Q$ is in fact \textit{cyclic}, i.e. $Q\cong \mathbb{Z}_{2^n-1}$, it suffices to show that \textit{there does not exist a positive integer $1\le m<2^n-1$ such that}
\begin{equation}\label{there}
\underbrace{x\boxplus\ldots\boxplus x}_{\text{$m$ times}}=c \qquad \text{holds for any element $x\in Q$}.
\end{equation}
We argue by contradiction and assume that there exist a positive integer $1\le m<2^n-1$ satisfying \eqref{there}. Let us write $m=2^{m_1}+\ldots +2^{m_{k-1}}+2^{m_k}$, $n>m_k>\ldots >m_1\ge0$, the binary decomposition of $m$. Using the doubling operator $T$ we obtain
$2\otimes x=x\boxplus x=T(xx)=T(x)$ for any $x\in Q$, hence $2^s \otimes x=T^{s}(x)$ implying by virtue of \eqref{there}
\begin{equation}\label{productm}
\underbrace{x\boxplus\ldots\boxplus x}_{\text{$m$ times}}=T^{m_k}(x)\boxplus \ldots \boxplus T^{m_1}(x)=c.
\end{equation}
Combining  the terms into a nonassociative products we obtain using \eqref{product} and then the fact that $T$ is an algebra isomorphism that
$$
T^{m_{2}}(x) \boxplus T^{m_1}(x)=T(T^{m_{2}}(x) T^{m_1}(x))=T^{m_{2}+1}(x) T^{m_1+1}(x).
$$
Arguing similarly, we get
$$
T^{m_{3}}(x) \boxplus T^{m_{2}}(x) \boxplus T^{m_1}(x)=T^{m_{3}+1}(x)\bigl(T^{m_{2}+2}(x) T^{m_1+2}(x)\bigr)
 \quad \text{etc}
$$
Define
$$
H(x):=T^{m_{k}+1}(x)\bigl(T^{m_{k-1}+2}(x)(\ldots(T^{m_{2}+k-1}(x) T^{m_1+k-1}(x)\bigr).
$$
By the linearity of $T=L_c^{-1}$ on $\mathbb{A}$, $H(x)$ is a nonassociative polynomial of degree $k\le n-1$. Also $H(0)=0$ and by \eqref{productm},  $H(x)=c$ for all $x\in \Idm(\mathbb{A})$. Hence by the syzygy relation \eqref{syzy1} we obtain
$$
(2^n-1)c=\sum_{x\in Q}H(x)=0,
$$
a contradiction finishes the proof.

\end{proof}

\begin{corollary}[Theorem~\ref{the:Znn}]
The quasigroup $\Idm(\mathbb{A})$ (with respect to the original multiplication in $A$) is strongly isotopic to the quasigroup $(\mathbb{Z}_{2^n-1},\bullet)$ with multiplication defined by
\begin{equation}\label{IJ}
u\bullet v\equiv \frac{1}{2}(u+v)\mod (2^n-1),\qquad \forall u,v\in \mathbb{Z}_{2^n-1},
\end{equation}
\end{corollary}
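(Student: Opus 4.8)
The plan is to promote the cyclic group isomorphism furnished by Theorem~\ref{the:Zn} into the asserted isotopy, the only genuinely new ingredient being the precise way the \emph{original} product $xy$ is tied to the group operation $\boxplus$ through the map $T=L_c^{-1}$. First I would fix $c\in\Idm(\mathbb{A})$ and, invoking Theorem~\ref{the:Zn}, choose a group isomorphism $\phi\colon(\Idm(\mathbb{A}),\boxplus)\to(\mathbb{Z}_{2^n-1},+)$, so that $\phi(x\boxplus y)\equiv\phi(x)+\phi(y)\bmod(2^n-1)$ and $\phi$ is a bijection of the underlying sets.

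Next I would record the doubling identity already established in the proof of Theorem~\ref{the:Zn}: for any idempotent $x$ one has $x\boxplus x=T(x\cdot x)=T(x)$, since $x^2=x$. Transporting this through $\phi$ gives $\phi(T(x))\equiv 2\phi(x)\bmod(2^n-1)$, i.e.\ $T$ is intertwined by $\phi$ with multiplication by $2$ on $\mathbb{Z}_{2^n-1}$. Because $2^n-1$ is odd, $2$ is invertible modulo $2^n-1$ (with inverse $2^{n-1}$), so $\phi\circ T^{-1}$ is multiplication by $\tfrac12$; that is, $\phi(T^{-1}(x))\equiv\tfrac12\phi(x)\bmod(2^n-1)$.

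Then I would combine this with relation \eqref{product}, which expresses the original multiplication as $xy=T^{-1}(x\boxplus y)$. Applying $\phi$ and the halving identity yields
$$
\phi(xy)=\phi\bigl(T^{-1}(x\boxplus y)\bigr)\equiv\tfrac12\,\phi(x\boxplus y)\equiv\tfrac12\bigl(\phi(x)+\phi(y)\bigr)\bmod(2^n-1),
$$
which is precisely $\phi(x)\bullet\phi(y)$ for the operation $\bullet$ in \eqref{IJ}. Hence $\phi$ satisfies $\phi(xy)=\phi(x)\bullet\phi(y)$, so the single bijection $\phi$ may be taken as $f=g=h$ in the isotopy definition; in fact it is then a genuine isomorphism of quasigroups, which a fortiori yields the claimed strong isotopy between $(\Idm(\mathbb{A}),\cdot)$ and $(\mathbb{Z}_{2^n-1},\bullet)$.

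I do not anticipate a serious obstacle, as the statement is essentially a repackaging of Theorem~\ref{the:Zn}. The single point requiring care is the interplay of the two operations: one must verify that $T$ acts as doubling on $\Idm(\mathbb{A})$ and then exploit $\gcd(2,2^n-1)=1$ to invert it, which is what converts the product $xy$ into the half-sum. Everything else is a direct transport of structure along $\phi$.
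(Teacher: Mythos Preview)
Your proposal is correct and follows essentially the same route as the paper's proof: both fix $c$, transport the cyclic structure from Theorem~\ref{the:Zn} via $\phi$, use the doubling identity $\phi(T(u))=2\phi(u)$ coming from $u\boxplus u=T(u)$, and deduce $\phi(xy)=\tfrac12(\phi(x)+\phi(y))$. The only cosmetic difference is that the paper applies the doubling identity directly with $u=xy$ to get $2\phi(xy)=\phi(x)+\phi(y)$, whereas you first invert $T$ modulo $2^n-1$ and then apply it to $xy=T^{-1}(x\boxplus y)$; these are the same computation read in opposite directions.
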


\begin{proof}
Given a fixed idempotent $c\in \Idm(\mathbb{A})$ let $Q=(\Idm(\mathbb{A}), \boxplus)$ be the cyclic group of order $2^n-1$ with unit $c$ and defined explicitly by $x\boxplus y=T(xy)$, where $T=L_c^{-1}$. By Theorem~\ref{the:Zn} there exists an isomorphism  $\phi$ of $Q$ to the  cyclic group $(\mathbb{Z}_{2^n-1}, +)$ of integers with modular addition. By the definition, 
\begin{equation}\label{ppree}
\phi(T(xy))=\phi(x\boxplus y)=\phi(x)+\phi(y)\qquad \text{for any $x,y\in Q$.}
\end{equation}
On the other hand, since $uu=u$ for any $u\in \Idm(\mathbb{A})$ then $T(u)=T(uu)=u\boxplus u$, hence $\phi(T(u))=2\phi(u)$, and combining this with \eqref{ppree} we obtain 
$$
2\phi(xy)=\phi(x)+\phi(y),
$$
i.e. using the notation in \eqref{IJ},  
$$
\phi(xy)=\frac12(\phi(x)+\phi(y))=\phi(x)\bullet \phi(y),
$$
which shows that $\phi:\Idm(\mathbb{A})\to (\mathbb{Z}_{2^n-1},\bullet)$ is a strong isotopy, hence implying the desired conclusion. 
\end{proof}

\section{Discussion and final remarks}
\subsection{Medial algebra extensions}
Let $(Q,\circ)$ be an idempotent medial commutative quasigroup, IMC-quasigroup, for short, following \cite{LeibakPuusemp}. Note that the order $|Q|$ of an IMC-quasigroup is always an \textit{odd} number, see, for example, Corollary~2.5 in \cite{Kinyon09}.

\begin{definition}
An algebra $\mathbb{A}$ over a field $\Field$ is called an \textit{medial extension} of $Q$ if there exists a multiplicative monomorphism (injective homomorphism)  $f:Q\to \Idm(\mathbb{A})$ and $f(Q)$ generates $\mathbb{A}$.
\end{definition}

We see from Proposition~\ref{pro:suff} that any medial extension of a quasigroup is  a medial algebra.

It follows from the definition that
$\dim A\le |Q|\le \mathrm{card}( \Idm(\mathbb{A}))$. Furthermore, if algebra $\mathbb{A}$ is generic then one also has
$$
\dim A\le |Q|\le \mathrm{card}( \Idm(\mathbb{A}))\le 2^{\dim A}-1.
$$
To distinguish the extremal cases in the latter inequality, we say that a medial extension is \textit{minimal} (resp. \textit{maximal}) if $|Q|=  2^{\dim A}-1$ (resp. if $|Q|=\dim \mathbb{A}$).

A natural question in the present context arises: does an arbitrary IMC-quasigroup obey a minimal medial extension?

The following proposition shows that any IMC-quasigroup obeys a maximal medial extension.

\begin{proposition}
\label{pro:extens}
Let $Q$ be an IMC-quasigroup and let $\mathbb{A}$ be the free module generated by $Q$ as a basis. Define a commutative nonassociative algebra structure on $\mathbb{A}$ by the natural extension of the quasigroup multiplication on $Q$. Then $\mathbb{A}$ is a maximal medial extension of $Q$.
\end{proposition}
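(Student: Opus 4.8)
The plan is to verify the four requirements packaged in the definition of a maximal medial extension directly, since once the algebra structure is pinned down, each reduces to bookkeeping built on Proposition~\ref{pro:suff} and the defining axioms of an IMC-quasigroup. First I would record the algebra structure explicitly: denoting the elements of $Q$ by $q_1,q_2,\dots$, I declare $q_iq_j:=q_i\circ q_j$ on basis pairs and extend bilinearly to $\mathbb{A}$. Commutativity of $\mathbb{A}$ is immediate from commutativity of $\circ$, and each basis element is an idempotent of $\mathbb{A}$ since $q_iq_i=q_i\circ q_i=q_i$ by idempotency of $Q$. Hence the basis inclusion $f:Q\to\mathbb{A}$ lands inside $\Idm(\mathbb{A})$.

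The key step is mediality, for which I would invoke Proposition~\ref{pro:suff}: it suffices to check the medial magma identity \eqref{magma} on the basis $Q$. The crucial observation is that $Q$ is closed under $\circ$, so for basis elements the algebra product $q_iq_j$ is \emph{itself} the basis element $q_i\circ q_j$; consequently the algebra product of any two basis elements coincides with their quasigroup product. Applying this twice together with the medial identity \eqref{xyzw} for $Q$ gives
\begin{align*}
(q_aq_b)(q_cq_d)=(q_a\circ q_b)\circ(q_c\circ q_d)=(q_a\circ q_c)\circ(q_b\circ q_d)=(q_aq_c)(q_bq_d),
\end{align*}
so the basis satisfies \eqref{magma}, and Proposition~\ref{pro:suff} yields that $\mathbb{A}$ is medial.

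It then remains to check that $f$ witnesses a \emph{maximal} medial extension. Multiplicativity $f(q_i\circ q_j)=f(q_i)f(q_j)$ holds by the very definition of the product, and $f$ is injective because distinct elements of $Q$ are distinct basis vectors and hence linearly independent in $\mathbb{A}$; thus $f$ is a multiplicative monomorphism into $\Idm(\mathbb{A})$. Since $Q$ is a basis, $f(Q)$ spans and therefore generates $\mathbb{A}$, and moreover $\dim\mathbb{A}=|Q|$, which is exactly the defining equality $|Q|=\dim\mathbb{A}$ of a maximal extension.

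I do not anticipate a genuine obstacle in this argument: it is essentially a verification. The only point demanding care is the closure step identifying the algebra product of two basis elements with their quasigroup product, as this is precisely what transports the medial law from $Q$ to all of $\mathbb{A}$ via Proposition~\ref{pro:suff}. One should also keep the two multiplications notationally separate throughout, since $\mathbb{A}$ need not be a quasigroup and products of general (non-basis) elements of $\mathbb{A}$ are only defined by bilinear extension.
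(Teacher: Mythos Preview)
Your proof is correct and follows exactly the route the paper indicates: the paper's own proof consists of the single sentence ``Follows immediately from the definition and Proposition~\ref{pro:suff},'' and your argument is precisely the detailed unpacking of that sentence.
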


\begin{proof}
Follows immediately from the definition and Proposition~\ref{pro:suff}.
\end{proof}


\subsection{Construction of a medial algebra from a medial quasigroup}

Using Proposition~\ref{pro:suff}, we are ready to produce some simple explicit of medial algebras. As we already mentioned above, the order of any IMC-quasigroup is an odd number. An example below shows that for any odd number there exists at least one IMC-quasigroup.

Let $N$ be an odd positive integer and let $\mathbb{Z}_N=\mathbb{Z}/N\mathbb{Z}$  denote the commutative ring of integers modulo $N$. We denote by $\bar x$ the residue class containing $x\in \mathbb{Z}$, or just by $x$ if this causes no ambiguity.

\begin{proposition}
\label{pro:exist}
 Then $\ZN:=(\mathbb{Z}_N,\circ)$ with multiplication defined by
\begin{equation}\label{ij}
\bar{x}\circ \bar{y}\equiv \frac{1}{2}(\bar{x}+\bar{y})\mod N,\qquad \forall \bar{x},\bar{y}\in \mathbb{Z}_N,
\end{equation}
is an IMC-quasigroup of order $N$.

\end{proposition}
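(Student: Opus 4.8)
The plan is to verify directly each defining property of an IMC-quasigroup for the operation \eqref{ij}, the whole argument hinging on a single arithmetic observation: since $N$ is odd we have $\gcd(2,N)=1$, so $2$ is a unit in the ring $\mathbb{Z}_N$ and its inverse $\frac12$ (the residue class $(N+1)/2$) is well defined. This is what makes $\bar x\circ\bar y=\frac12(\bar x+\bar y)$ a genuine binary operation on the $N$-element set $\mathbb{Z}_N$, so that $|\ZN|=N$.

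First I would dispose of the easy identities. Commutativity is immediate from the symmetry of $\bar x+\bar y$, and idempotency follows from $\bar x\circ\bar x=\frac12(\bar x+\bar x)=\bar x$. Next, to establish the quasigroup property I would solve the two cancellation equations explicitly: for fixed $\bar a,\bar b\in\mathbb{Z}_N$ the equation $\bar a\circ\bar x=\bar b$ is equivalent to $\bar a+\bar x=2\bar b$, whence $\bar x=2\bar b-\bar a$ is the unique solution, and by commutativity the same computation solves $\bar y\circ\bar a=\bar b$ uniquely. Thus both translations are bijections and $\ZN$ is a quasigroup (equivalently, its multiplication table is a Latin square).

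Finally, for the medial identity I would simply compute both sides, using $\frac14=(\frac12)^2$, which again exists because $2$ is a unit. One gets
$$(\bar x\circ\bar y)\circ(\bar z\circ\bar w)=\frac12\Bigl(\tfrac12(\bar x+\bar y)+\tfrac12(\bar z+\bar w)\Bigr)=\frac14(\bar x+\bar y+\bar z+\bar w),$$
while the right-hand side $(\bar x\circ\bar z)\circ(\bar y\circ\bar w)$ yields $\frac14(\bar x+\bar z+\bar y+\bar w)$, the same element of $\mathbb{Z}_N$. Hence \eqref{xyzw} holds and $\ZN$ is an IMC-quasigroup of order $N$. There is no substantial obstacle here; the only place where the hypothesis is actually used is the well-definedness of $\frac12$ (and consequently $\frac14$), which is precisely where the oddness of $N$ enters, and without which the operation \eqref{ij} would not even be defined.
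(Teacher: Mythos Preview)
Your proof is correct and follows essentially the same approach as the paper: both arguments note that $2$ is invertible in $\mathbb{Z}_N$ since $N$ is odd, then verify commutativity, idempotency, the Latin-square property via the explicit solution $\bar w=2\bar b-\bar a$, and the medial identity by computing $(\bar x\circ\bar y)\circ(\bar z\circ\bar w)=\tfrac14(\bar x+\bar y+\bar z+\bar w)$. The only difference is the order in which the properties are checked.
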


\begin{proof}
Since $2$ is invertible in $\mathbb{Z}_N$, the multiplication $\circ$ is well defined and commutative. The medial magma identity follows immediately from
$$
(\bar{x}\circ\bar{y})\circ(\bar{z}\circ \bar{w})=
\frac{1}{4}(\bar{x}+\bar{y}+\bar{z}+\bar{w})
$$
whereas the left hand side is completely symmetric in all four variables. Next, to verify the quasigroup property let $\bar{x},\bar{y}\in \mathbb{Z}_N$. Then $\bar{w}=2\bar{y}-\bar{x}$ satisfies $\bar{w}\circ \bar{x}=\bar{y}$. If $\bar{w}_1\circ \bar{x}=\bar{y}$  then $\frac12(\bar{w}_1-\bar{w})= 0$, hence $\bar{w}=\bar{w}_1$. This shows that $\circ$ satisfies the Latin square property, as desired.  Finally, $\bar{x}\circ \bar{x}\equiv \bar{x}$, thus all elements of $\ZN$ are idempotents. This shows that $\ZN$ is an IMC-quasigroup. Clearly, $|\ZN|=N$.
\end{proof}

\begin{remark}
Comparing the above multiplication \eqref{ij} with the Bruck-Murdoch-Toyoda theorem (Theorem~\ref{th:murd}), one can identify the corresponding abelian group  $(\mathbb{Z}_N,+)$ with $g=0$ and automorphisms $\phi=\psi$ defined as multiplication by $\frac12$. The latter choice is essentially the only possible one as a straightforward verification by virtue of the Bruck-Murdoch-Toyoda theorem shows; see also Proposition~4.1 in \cite{Leibak2014} for a general abelian group. See some further discussion in \cite{Leibak2014}, \cite{LeibakPuusemp}.
\end{remark}

To proceed  we need to characterize the $\circ$ multiplication in more detail. To this end, recall that given two relative prime $a$ and $N$, the smallest integer $n$ satisfying $a^n\equiv 1 \mod N$ is called the multiplicative order of $a$ modulo $N$, denoted by $\ord_N(a)$. It is well known in the classical number theory that $\ord_N(a)$ divide $\varphi(N)$, where  $\varphi$ is  Euler's totient function. As  a consequence of Lagrange's theorem, $\ord_N(a)$ always divides $\varphi(N)$.

Let ${x},{y}\in \ZN$. Define the \textit{order} $\ord^\circ_x y$ of an element $y$ with respect to $x$ as the minimal positive integer $p$ such that $L_{{x}}^p{y}={y}$.

\begin{proposition}
\label{pro:order}
The order is well defined and given by $\ord^\circ_x y=\omega_N(x-y)$, where
\begin{equation}\label{phi2}
\omega_N(m):=\min\{p\in \mathbb{N}: (2^p-1)m\equiv 0\mod N\}.
\end{equation}
In particular, the order depends only on (the residue class of) $|x-y|$ and
\begin{equation}\label{phi3}
\ord^\circ_x y=\ord^\circ_y x.
\end{equation}
 \end{proposition}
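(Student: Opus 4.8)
The plan is to compute the iterates $L_x^p y$ in closed form and then read off the order condition directly. First I would establish by induction on $p$ that
\begin{equation*}
L_x^p y \equiv \frac{2^p-1}{2^p}\,x + \frac{1}{2^p}\,y \pmod N ,
\end{equation*}
which makes sense because $N$ is odd, so $2$ (hence $2^p$) is invertible in $\mathbb{Z}_N$. The base case $p=0$ is trivial, and the inductive step is the one-line computation $L_x^{p+1}y = \frac12\bigl(x + \frac{2^p-1}{2^p}x + \frac{1}{2^p}y\bigr) = \frac{2^{p+1}-1}{2^{p+1}}x + \frac{1}{2^{p+1}}y$, using the defining formula \eqref{ij}.

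With this expression in hand, the fixed-point equation $L_x^p y = y$ becomes $\frac{2^p-1}{2^p}(x-y)\equiv 0 \pmod N$, and multiplying through by the unit $2^p$ turns it into the clean congruence
\begin{equation*}
(2^p-1)(x-y)\equiv 0 \pmod N .
\end{equation*}
The minimal positive $p$ for which this holds is, by the very definition \eqref{phi2} of $\omega_N$, equal to $\omega_N(x-y)$; this yields $\ord^\circ_x y=\omega_N(x-y)$.

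It then remains to check that this quantity is genuinely well defined, i.e.\ that some valid $p$ exists. Writing $g=\gcd(x-y,N)$, the congruence $(2^p-1)(x-y)\equiv 0\pmod N$ is equivalent to $2^p\equiv 1 \pmod{N/g}$, since after dividing by $g$ the factor $(x-y)/g$ is coprime to $N/g$. As $N$ is odd, so is $N/g$, whence $2$ is a unit modulo $N/g$, and Lagrange's theorem guarantees a finite multiplicative order; thus $\omega_N(x-y)=\ord_{N/g}(2)$ is well defined. Finally, both the dependence only on (the residue class of) $|x-y|$ and the symmetry \eqref{phi3} follow at once from the observation that $\gcd(m,N)=\gcd(-m,N)$, so that $\omega_N(m)=\omega_N(-m)$; applying this with $m=x-y$ gives $\ord^\circ_x y=\omega_N(x-y)=\omega_N(y-x)=\ord^\circ_y x$.

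There is no serious obstacle here; the only real content is spotting the closed form for $L_x^p y$, after which the statement reduces to the elementary fact that the relevant congruence is governed by the multiplicative order of $2$ modulo $N/\gcd(x-y,N)$.
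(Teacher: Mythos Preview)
Your proof is correct and follows essentially the same approach as the paper: compute the closed form $L_x^p y \equiv \frac{2^p-1}{2^p}x + \frac{1}{2^p}y$ and reduce the fixed-point equation to $(2^p-1)(x-y)\equiv 0\pmod N$. The only cosmetic difference is that the paper establishes well-definedness first via the pigeonhole principle and the quasigroup cancellation property, whereas you derive it afterwards from the explicit formula $\omega_N(x-y)=\ord_{N/g}(2)$; your route is arguably cleaner and more informative.
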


\begin{proof}
First note that $\ord^\circ_x y$ is well defined. Indeed, since $\ZN$ is finite, then by the pigeonhole principle there exists integers $q>k\ge 0$ such that $L_x^qy=L_x^ky$. By the quasigroup cancelation property we have $L_x^{q-k}y=y$. Define $p$ as the minimal positive integer such that  $L_x^py=y$ holds. Since
$$
L_x^iy\equiv \frac{1}{2^i}y+(\frac{1}{2^i}+\ldots+\frac{1}{2})x\equiv
\frac{1}{2^i}y+(1-\frac{1}{2^i})x\mod N,
$$
we have  $\frac{2^p-1}{2^p}(x-y)\equiv 0\mod N$, or, since $N$ is odd,
\begin{equation}\label{phi1}
(2^p-1)(x-y)\equiv 0\mod N.
\end{equation}
This implies our claim by virtue of \eqref{phi2}.
Then \eqref{phi3} follows from \eqref{phi2}.
\end{proof}

Since $2$ and $N$ are relatively prime, by Euler's theorem $2^{\varphi(N)}-1\equiv 0\mod N$. Thus, $p\le \varphi(N)$.
In fact, the above argument also implies that $\ord^\circ_x y\le \ord_N (a)$, the multiplicative order of $a$ modulo $N$.

\begin{corollary}
If $N$ is an odd prime then $\ord^\circ_x y=\ord_N(2)$ unless $x=y$ in $\mathbb{Z}_N$.
\end{corollary}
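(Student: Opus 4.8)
The plan is to invoke Proposition~\ref{pro:order}, which already expresses the order in closed form as $\ord^\circ_x y = \omega_N(x-y)$, where
$$
\omega_N(m) := \min\{p \in \mathbb{N} : (2^p - 1)m \equiv 0 \mod N\}.
$$
It therefore suffices to show that $\omega_N(m) = \ord_N(2)$ whenever $m := x - y \not\equiv 0 \mod N$. In other words, the entire content of the corollary is a specialization of the already-proven order formula to the case where the modulus is prime.

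First I would observe that the hypothesis $x \ne y$ in $\mathbb{Z}_N$ means precisely that $m \not\equiv 0 \mod N$. Since $N$ is an odd prime, the ring $\mathbb{Z}_N$ is a field, so $m$ is invertible modulo $N$. Consequently the defining congruence $(2^p - 1)m \equiv 0 \mod N$ is equivalent, after multiplying through by $m^{-1}$, to $2^p \equiv 1 \mod N$. By definition the minimal positive integer $p$ satisfying the latter is the multiplicative order $\ord_N(2)$; hence $\omega_N(m) = \ord_N(2)$, and combining this with $\ord^\circ_x y = \omega_N(x-y)$ yields the claim.

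There is no genuine obstacle here: the result is an immediate consequence of Proposition~\ref{pro:order} together with the fact that every nonzero residue modulo a prime is a unit. The only point requiring (minimal) care is the translation of the hypothesis $x \ne y$ into the arithmetic condition $m \not\equiv 0$, since it is exactly the invertibility of $m$ that lets one clear the factor $m$ from the congruence; for composite $N$ the value $\omega_N(m)$ genuinely depends on $\gcd(m,N)$ and the statement would fail, which is why primality is assumed.
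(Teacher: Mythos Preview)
Your proof is correct and follows essentially the same approach as the paper's: both invoke Proposition~\ref{pro:order} to reduce to computing $\omega_N(x-y)$, then use the fact that $x-y$ is a unit modulo the prime $N$ to cancel it from the congruence $(2^p-1)(x-y)\equiv 0$, arriving at $\ord_N(2)$. The paper phrases the key step as ``$x-y$ is coprime with $N$'' rather than ``$\mathbb{Z}_N$ is a field'', but the content is identical.
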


\begin{proof}
Indeed, for all  distinct $x$ and $y$ in $\mathbb{Z}_N$,  $x-y$ is coprime with $N$, therefore \eqref{phi2} is equivalent to
$$
\ord^\circ_x y=\min\{p\in \mathbb{N}: 2^p\equiv 1\mod N\}=\ord_N(2),
$$
hence the conclusion follows.
\end{proof}

For composite odd $N$, the situation is more complicated. Since we are primarily interested in the minimal medial extensions, the most relevant for us case is when $N=2^n-1$, $n=2,3,\ldots$ We consider this case in the next section.

\subsection{The particular case $N=2^n-1$}
We consider below the case $N=2^n-1$, where $n\ge 2$ is an integer. Then it follows from \eqref{phi2} that
$$
\ord^\circ_x y\le n.
$$

\begin{proposition}
\label{pro:euler}
The set $P(n)$ of all possible orders $p=\ord^\circ_x y$, where $x\ne y$, is exactly the set of all integers less or equal $n$ and having a nontrivial common divisor with $n$.
\end{proposition}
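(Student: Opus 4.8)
The plan is to reduce the quasigroup order to a multiplicative order and then determine its range. By Proposition~\ref{pro:order} one has $\ord^\circ_x y=\omega_N(x-y)$ for $N=2^n-1$, and as $x$ varies over the elements $\ne y$ the difference $m:=x-y$ runs over every nonzero residue in $\mathbb{Z}_N$; hence $P(n)=\{\omega_N(m):m\not\equiv 0\pmod N\}$, and the task becomes a study of the single function $\omega_N$ from \eqref{phi2}. First I would rewrite $\omega_N(m)$ as a multiplicative order: putting $d=\gcd(m,N)$ and $N'=N/d$, the defining congruence $(2^p-1)m\equiv 0\pmod N$ is equivalent to $2^p\equiv 1\pmod{N'}$, so that $\omega_N(m)=\ord_{N'}(2)$. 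Since $d$ ranges over all proper divisors of $N$ as $m$ ranges over the nonzero residues, $N'$ ranges over all divisors of $2^n-1$ exceeding $1$, and therefore
\[
P(n)=\{\ord_{N'}(2):N'\mid 2^n-1,\ N'>1\}.
\]

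For the inclusion $\subseteq$ I would verify the membership criterion directly. The bound $p\le n$ is already recorded from \eqref{phi2}. If $N'>1$ divides $2^n-1$ then $2^n\equiv 1\pmod{N'}$, so $p:=\ord_{N'}(2)$ is a divisor of $n$ larger than $1$; in particular $\gcd(p,n)=p>1$, and hence every $p\in P(n)$ shares a nontrivial common divisor with $n$ and satisfies $p\le n$. This is the easy half, and it reduces the whole problem to the \emph{realization} of prescribed values.

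For the reverse inclusion I would construct, for each admissible $p$, a residue $m$ with $\omega_N(m)=p$. The natural tool is the identity $\gcd(2^a-1,2^b-1)=2^{\gcd(a,b)}-1$: when $p\mid n$ the choice $N'=2^p-1$ divides $2^n-1$ and satisfies $\ord_{N'}(2)=p$, so $m=(2^n-1)/(2^p-1)$ realizes the order $p$. The hard part will be to realize the remaining prescribed values and, more precisely, to match the set of realizable orders against the claimed index set; this amounts to describing exactly which multiplicative orders of $2$ occur among the divisors of $2^n-1$. I expect controlling this order spectrum---via the $\gcd$ identity above together with the factorization structure (primitive prime divisors) of $2^n-1$---to be the crux of the argument, and the point at which the precise form of $P(n)$ is pinned down.
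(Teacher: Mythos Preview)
Your reduction is correct and in fact sharper than the paper's argument: you have shown that $P(n)=\{\ord_{N'}(2):N'\mid 2^n-1,\ N'>1\}$, and since every such $N'$ satisfies $2^n\equiv 1\pmod{N'}$, each $p\in P(n)$ actually \emph{divides} $n$. You then realize every divisor $d>1$ of $n$ via $N'=2^d-1$. Together these two observations already give the exact description
\[
P(n)=\{d:d\mid n,\ d>1\}.
\]

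The ``remaining prescribed values'' you are struggling to realize simply do not occur: your own argument shows that no $p\le n$ with $\gcd(p,n)>1$ but $p\nmid n$ can ever arise as an order. A concrete counterexample to the proposition as stated is $n=6$: here $N=63=3^2\cdot 7$, and the divisors $N'>1$ of $63$ give $\ord_{N'}(2)\in\{2,3,6\}$, so $P(6)=\{2,3,6\}$; yet the claimed set of integers $\le 6$ sharing a nontrivial factor with $6$ is $\{2,3,4,6\}$. The paper's own proof contains the same gap: in the direction ``$\gcd(p,n)>1\Rightarrow p\in P(n)$'' it only exhibits the common divisor $r=2^s-1$ (with $s=\gcd(p,n)$) of $2^p-1$ and $2^n-1$, but the natural witness $m=N/r$ then has $\omega_N(m)=\ord_r(2)=s$, not $p$. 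So you should stop at the characterization $P(n)=\{d>1:d\mid n\}$; the stronger assertion is false.
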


\begin{proof}
First note that if $k,m\ge2$ then
$$
2^{km}-1=(2^k-1)(2^{k(m-1)}+\ldots+1)
$$
hence $2^k-1$ divides $2^{km}-1$.

Now let  $s:=\gcd(p,n)>1$ and  $p<n$. Then by the above, $r:=2^s-1$ is a nontrivial common divisor of $2^p-1$ and $2^n-1$, therefore $p\in P(n)$. In the converse direction, let $p\in P(n)$. Then it follows from \eqref{phi2} that $2^p-1$ and $2^n-1$ have a nontrivial common divisor,  say $r$. Then $r$ is odd and divides $2^n-2^p$, thus also divides $2^{n-p}-1$. Arguing as in the proof of the Euclidean algorithm, we conclude that $r$ also divides $2^s-1$, where  $s:=\gcd(p,n)$. Since $r$ is nontrivial, we have $s\ge2$, thus $p$ and $n$ also have a nontrivial divisor. The proposition follows.
\end{proof}

\begin{example}
We illustrate the above by a composite number $N=15=2^4-1$, where $\ord_{15}2=4$. Then $\frac{1}{2}\equiv 8 \mod 15$, thus
$$
x\circ y\equiv  8(x+y)\mod 15.
$$
Let $x=1$ and $y=2$. Then the sequence $L_x^ky$ is given explicitly for $k=0,1,2,3,\ldots$ by
$$
2,\,9,\,5, \,3, 2\, \ldots\quad \Rightarrow\quad \ord^\circ_12=4,
$$
in contrast to $x=1$ and $y=6$, where the corresponding $L_x^ky$ are
$$
6, \,11, 6, \ldots\quad \Rightarrow\quad \ord^\circ_16=2.
$$
By Proposition~\ref{pro:euler}, $P(4)=\{2,4\}$. Indeed, for a general $x, y\in \mathbb{Z}_{15}$ we have
$$
\ord^\circ_x y=
\left\{
\begin{array}{cl}
1& \text{if $x-y=0$}\\
2& \text{if $(x-y,5)=5$}\\
4& \text{if $(x-y,5)=1$}
\end{array}
\right.
$$
The action of $L_1$ on $\mathbb{Z}^\circ_{15}$ gives the following orbits:
$$
\begin{array}{ccccccc}
1&&&\\
6&\to&  11&&\\
2& \to& 9 &\to& 5  &\to& 3 \\
4& \to &10 &\to &13 &\to& 7\\
8& \to &12 &\to &14 &\to& 15\\
\end{array}
$$

\subsection{An alternative model of $\mathbb{A}_n$}
It would be very desirable to have some more explicit information about the idempotents of $\mathscr{C}_n$ , see section~\ref{sec:polynom}. Unfortunately, the  idempotency relation \eqref{pq} is  not very illuminating and even in the lower-dimensional case $n=3$ leads to a rather difficult algebraic manipulations. We notice that the identities \eqref{pzetak} uniquely determine the set of all idempotents of  $\mathscr{C}_n$. Indeed, let us consider the system
\begin{equation}\label{system}
\left\{
\begin{array}{ll}
p^2(\epsilon_n^1)&=p(1)\\
p^2(\epsilon_n^2)&=p(\epsilon_n)\\
\ldots&=\ldots\\
p^2(\epsilon_n^{n})&=p(\epsilon_n^{n-1})
\end{array}
\right.
\end{equation}
Then the above system is a system of $n$ quadratic equations in variables $(x_0,x_1,\ldots,x_{n-1})$. By B\'ezout's theorem, \eqref{system} has either infinitely many or less than $2^n$ complex solutions (counting the trivial zero solution). It is not difficult to show that it has exactly $2^n$ distinct solutions using the fact that the Jacobian matrix is nonsingular at any solution (cf. (9) in \cite{KrTk18a} which is essentially equivalent to that $\frac12\not\in \sigma(\mathscr{C}_n)$). Due to the symmetry of \eqref{system}, it can be reduced to an algebraical equation of degree $2^n-1$. A more careful analysis with help of Galois theory reveals that any idempotent $p$ is a polynomial with coefficients in the cyclotomic extension $\mathbb{Q}[\epsilon_{2^n-1},\epsilon_n]$

\subsection{Iterations}
Note also that since $\mathscr{C}_n$ is medial algebra it makes it easy to construct idempotents if one knows some of them by combining the following steps:

\begin{enumerate}[label=(\Alph*)]
  \item \label{idem1}
  If $p(z)$ and $q(z)$ are idempotents then $p(z\epsilon_n^k)q(z\epsilon_n^k)$ is an idempotent too.
  \item \label{idem2}
  Applying the above to $q=1$, $p(z\epsilon_n^k)=L_{1}^{\circ k}p(z)$ is an idempotent in $\mathscr{C}_n$.
\end{enumerate}

Let $p(z)$ be an idempotent in $\mathscr{C}_n$ distinct from $1$ such that let $\deg_R p=\deg p(z)$. By the medial property Proposition~\ref{pro:idem}, $1\circ p$ is an idempotent too. This shows that the iterations
$$
p(z),\, p(\epsilon_n z),\, \ldots,\, p(\epsilon_n^k z),\, \ldots,\, p(\epsilon_n^{n-1}z)
$$
are also idempotents. Since $\deg p\le n-1<n$, $p(\epsilon_n^k z)\ne p(\epsilon_n^m z)$ as elements in $\mathscr{C}_n$ unless $p(\epsilon_n^k z)= p(\epsilon_n^m z)$ as polynomials. Suppose that $p(\epsilon^k z)= p(\epsilon^m p(\epsilon^m z)$ for some $0\le k<m\le n-1$. Then $p(\epsilon^{k-m} z)= p(z)$. The latter is possible if and only if
$$
p(z)=p_1(z^\nu), \qquad \text{where $\nu=\frac{n}{\gcd(n,k-m)}$.}
$$



\end{example}

\subsection{Appendix: Non-generic isospectral algebras}
Finally we give an example of a three-dimensional non-generic isospectral algebra which is \textit{non}-Hsiang. This is a part of a more general construction, we consider it in full generality elsewhere.

Let us define  a three dimensional algebra $\mathbb{T}=\Field^3$, where $\Field$ is any field of characteristic not 2 and 3, with multiplication
\begin{equation}\label{app1}
x\circ y=(x_1y_1-\frac12x_2y_3-\frac12x_3y_2,\,x_2y_2-\frac12x_1y_3-\frac12x_3y_1,\,
x_3y_3-\frac12x_1y_2-\frac12x_1y_2 ),
\end{equation}
where $x=(x_1,x_2,x_3)$ and $y=(y_1,y_2,y_3)$ in a fixed basis of $\Field^3$.


\begin{remark}
It is easy to see that any element of $\mathbb{T}$ satisfies the so-called `adjoint identity'
\begin{equation}\label{adm}
(x\circ x)\circ (x\circ x)=N(x)x,
\end{equation}
with cubic form $N(x)$ given explicitly by
$$
N(x)=x_1^3 + x_2^3 + x_3^3-3x_1x_2x_3=(x_1 + x_2 + x_3) (x_1^2 - x_1 x_2 - x_1 x_3 + x_2^2 - x_2 x_3 + x_3^2).
$$
In the general case, an algebra satisfying \eqref{adm} is called in \cite{ElOkubo} an \textit{admissible cubic algebra}. We refer the interested reader to a recent paper of Elduque and Okubo\cite{ElOkubo} for more details and classification.
Any admissible algebra carries an invariant symmetric bilinear form $b$, i.e. it is metrized. In our case, the invariant bilinear form is just the Euclidean scalar product, $\scal{x}{y}=x_1y_1+x_2y_2+x_3y_3$. One can examine this property directly, alternatively we refer to the general formula (5) in \cite{ElOkubo}. We do not exploit the latter fact below but it explains why the Euclidean scalar product is relevant for $\mathbb{T}$.
\end{remark}

\begin{proposition}
The algebra $\mathbb{T}$ is isospectral. If $\Field$ is infinite, the algebra $\mathbb{T}$ is non-generic. The set of nonzero idempotents of $\mathbb{T}$ is the circle given by
\begin{equation}\label{ST}
\Idm(\mathbb{T})=\{(x_1,x_2,x_3):\, \sum_{i=1}^3x_i^2=\sum_{i=1}^3 x_i=1\}.
\end{equation}
Any idempotent $x\in \Idm(\mathbb{T})$ has spectrum $\{1,-\frac12,\frac12\}$.
\end{proposition}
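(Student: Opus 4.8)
The plan is to pin down $\Idm(\mathbb{T})$ explicitly from the idempotency equations, then show that the characteristic polynomial of $L_c$ does not depend on the idempotent $c$. Writing $x=(x_1,x_2,x_3)$, the relation $x\circ x=x$ is the system
\[
x_1^2-x_2x_3=x_1,\qquad x_2^2-x_1x_3=x_2,\qquad x_3^2-x_1x_2=x_3 .
\]
First I would introduce the symmetric functions $p_1=x_1+x_2+x_3$, $p_2=x_1x_2+x_2x_3+x_3x_1$ and $s=x_1^2+x_2^2+x_3^2=p_1^2-2p_2$. Subtracting the second equation from the first factors as $(x_1-x_2)(p_1-1)=0$, and cyclically one gets $(x_2-x_3)(p_1-1)=0$ and $(x_3-x_1)(p_1-1)=0$. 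Hence either $p_1=1$, or else $x_1=x_2=x_3$, in which case each equation degenerates to $0=x_i$ and only the zero idempotent survives. Thus every nonzero idempotent satisfies $p_1=1$. Summing the three equations gives $s-p_2=p_1=1$, and combined with $s=1-2p_2$ this yields $-3p_2=0$, so $p_2=0$ (this is where $\Char\Field\ne3$ enters) and therefore $s=1$.

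For the converse I would verify sufficiency: if $p_1=1$ and $p_2=0$ then $x_2x_3=p_2-x_1(p_1-x_1)=x_1^2-x_1$, so $x_1^2-x_2x_3=x_1$, and cyclically the other two equations hold. This establishes
\[
\Idm(\mathbb{T})=\{x:\ p_1=1,\ s=1\}=\{x:\ \textstyle\sum_i x_i=1,\ \sum_i x_i^2=1\},
\]
the asserted circle. Over an infinite field this locus is infinite: setting $x_3=1-x_1-x_2$ reduces it to the conic $x_1^2+x_1x_2+x_2^2-x_1-x_2=0$, which carries the rational point $(1,0)$ and hence has infinitely many points by the standard line-through-a-point parametrisation. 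In particular the complexification $\mathbb{T}_{\mathbb{C}}$ has far more than $2^3$ idempotents, so $\mathbb{T}$ is non-generic.

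It remains to compute $\spec(c)$ for a nonzero idempotent $c=(c_1,c_2,c_3)$. Polarising the product, the matrix of $L_c$ in the standard basis is the symmetric matrix
\[
L_c=\begin{pmatrix} c_1 & -\tfrac12 c_3 & -\tfrac12 c_2\\[2pt] -\tfrac12 c_3 & c_2 & -\tfrac12 c_1\\[2pt] -\tfrac12 c_2 & -\tfrac12 c_1 & c_3\end{pmatrix}.
\]
Immediately $L_cc=c\circ c=c$, so $1\in\spec(c)$, while $\trace L_c=p_1=1$, so the two remaining eigenvalues sum to $0$; write them $\mu,-\mu$. Since $\det L_c=1\cdot\mu\cdot(-\mu)=-\mu^2$, it suffices to compute the determinant. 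A direct cofactor expansion gives
\[
\det L_c=\tfrac34\,c_1c_2c_3-\tfrac14\bigl(c_1^3+c_2^3+c_3^3\bigr),
\]
and invoking the identity $c_1^3+c_2^3+c_3^3-3c_1c_2c_3=p_1(s-p_2)$ together with $p_1=1$, $s=1$, $p_2=0$ collapses this to $\det L_c=-\tfrac14$. Hence $\mu^2=\tfrac14$, i.e. $\mu=\tfrac12$, and the characteristic polynomial equals $-(t-1)(t-\tfrac12)(t+\tfrac12)$ for every $c$. Therefore $\spec(c)=\{1,\tfrac12,-\tfrac12\}$ independently of $c$, so $\mathbb{T}$ is isospectral; moreover, since $\tfrac12\in\sigma(\mathbb{T})$, Theorem~\ref{th:gener} furnishes an alternative proof of non-genericity.

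The main obstacle is the determinant computation: the trace and the eigenvalue $1$ are immediate, but locating the remaining pair forces one through the cubic expansion and the systematic use of the constraints $p_1=1$, $p_2=0$, $s=1$ via the Newton-type identity above. The only delicate point is bookkeeping the two mixed $c_1c_2c_3$ contributions (each with coefficient $\tfrac18$) so that they combine correctly with the leading $c_1c_2c_3$ term; everything else is routine symmetric-function algebra.
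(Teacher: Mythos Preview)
Your proof is correct. The identification of $\Idm(\mathbb{T})$ proceeds exactly as in the paper: subtract the idempotency equations pairwise to factor out $(x_i-x_j)(p_1-1)$, then sum them to pin down $s$ and $p_2$. Your treatment of the converse and of non-genericity is in fact more detailed than the paper's, which simply asserts that the circle gives infinitely many idempotents.

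The genuine difference lies in the spectrum computation. The paper constructs explicit eigenvectors: with $e=(1,1,1)$ one checks $L_c(e-c)=-\tfrac12(e-c)$ and $L_c(e\times c)=\tfrac12(e\times c)$, so the Peirce basis $\{c,\,e-c,\,e\times c\}$ is exhibited directly. Your route instead uses the invariants of $L_c$: trace $=p_1=1$ forces the remaining eigenvalues to be $\pm\mu$, and the determinant computation via the identity $c_1^3+c_2^3+c_3^3-3c_1c_2c_3=p_1(s-p_2)=1$ yields $\det L_c=-\tfrac14$, hence $\mu=\tfrac12$. The paper's approach buys an explicit Peirce decomposition (useful if one wants the fusion rules or the eigenspaces themselves), while yours is purely invariant-theoretic and makes transparent that the characteristic polynomial depends only on the symmetric functions $p_1,p_2,s$, which are constant on $\Idm(\mathbb{T})$; this is arguably the cleaner explanation of \emph{why} the algebra is isospectral. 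Both arguments are short, and your final remark that $\tfrac12\in\sigma(\mathbb{T})$ recovers non-genericity from Theorem~\ref{th:gener} is a nice bonus not present in the paper.
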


\begin{proof}
Let us first prove that the set of nonzero idempotents is exactly $S:=\{(x_1,x_2,x_3):\, \sum_{i=1}^3x_i^2=\sum_{i=1}^3 x_i=1\}$. To see that $S\subset \Idm(\mathbb{T})$, we note that for any point $x\in S$, there holds
\begin{align*}
x_1^2-x_2x_3&=(1-x_2-x_3)^2-x_2x_3=1+x_2x_3+x_2^2+x_3^2-2(x_2+x_3)\\
&=x_2x_3-x_1^2+2x_1,
\end{align*}
implying $x_1^2-x_2x_3=x_1$, thus by symmetry implying $x\circ x=x$. Conversely, if the latter idempotent identity holds true, then $x_i^2-x_jx_k=x_i$ for any permutation $\{i,j,k\}=\{1,2,3\}$. Summing up we obtain
\begin{equation}\label{vw}
w-\frac12 (v^2-w)=v, \qquad w:=\sum_{i=1}^3x_i^2, \quad v:=\sum_{i=1}^3x_i.
\end{equation}
Next, subtracting the idempotents relations, one obtains
$$
(x_i-x_j)(x_1+x_2+x_3-1)=0, \qquad \forall 1\le i<j\le 3.
$$
If $x_1+x_2+x_3=1$, we have $v=1$ and by \eqref{vw} also $w=1$, hence $x\in S$. If $x_1+x_2+x_3\ne1$ then all $x_i$ are equal which immediately implies  $x=0$. This shows that $\Idm(\mathbb{T})\subset S$ and finishes the proof of \eqref{ST}. In particular, the obtained algebra is non-generic.

Next, set $e=(1,1,1)$. Then for any idempotent $x\in \Idm(\mathbb{T})$ we have
$$
e\circ x=(x_1-\frac12x_2-\frac12x_3,x_2-\frac12x_1-\frac12x_3,x_3-\frac12x_1-\frac12x_2)=\frac32 x-\frac12 e,
$$
hence
\begin{equation}\label{eeq}
(e-x)\circ x=\frac32 x-\frac12 e-x=-\frac12(e-x),
\end{equation}
i.e. $(e-x)$ is an eigenvector of $L^\circ_x$ with eigenvalue $-\frac12$. Similarly, define $u=e\times x$, where $\times$ is the (formal) cross-product in $\Field ^3$, i.e.
$u=(x_2-x_3, x_3-x_1,x_1-x_2).$ Then
$$
x\circ u=\frac{x_1+x_2+x_3}2 u=\frac12 u,
$$
hence $u$ is an eigenvector of $L^\circ_x$ with eigenvalue $\frac12$. Also, $x$ itself is an eigenvector of $L^\circ_c$ with eigenvalue $1$. In summary, we have an eigenbasis of $L^\circ_x$ consisting of $x, e-x, e\times x$. This implies that the  spectrum of $L^\circ_x$ is $\{1,-\frac12,\frac12\}$. Thus, all idempotents of $\mathbb{T}$ have the same spectrum, so the algebra is isospectral.  Finally, since the trace $\trace L^\circ_x=1\ne 0$ the algebra is not a Hsiang algebra.
\end{proof}


\bibliographystyle{plain}      


\def\cprime{$'$}

\end{document}